\title[Noncommutative PVA and CD algebras]{Noncommutative Poisson vertex algebras and Courant--Dorfman algebras}
\author[L. \'Alvarez-C\'onsul]{Luis \'Alvarez-C\'onsul}
\address{Instituto de Ciencias Matem\'aticas (CSIC-UAM-UC3M-UCM)\\ Nicol\'as Cabrera 13--15, Cantoblanco\\ 28049 Madrid, Spain}
\email{l.alvarez-consul@icmat.es}
\author[D. Fern\'andez]{David Fern\'andez}
\address{Mathematics Research Unit, Luxembourg University \\ Maison du Nombre, 6 Avenue de la Fonte \\ L-4364 Esch-sur-Alzette, Luxembourg}
\email{david.fernandez@uni.lu}
\author[R. Heluani]{Reimundo Heluani}
\address{Instituto de Matem\'atica Pura e Aplicada (IMPA) \\ Estrada Dona Castorina 110 \\ 22460-320 \\ Rio de Janeiro\\ Brazil}
\email{heluani@potuz.net}
\theoremstyle{plain}
\newtheorem{theorem}{Theorem}[section]
\newtheorem{lemma}[theorem]{Lemma}
\newtheorem{corollary}[theorem]{Corollary}
\newtheorem{proposition}[theorem]{Proposition}
\theoremstyle{definition}
\newtheorem{definition}[theorem]{Definition}
\newtheorem{definition-theorem}[theorem]{Definition-Theorem}
\theoremstyle{remark}
\newtheorem{remark}[theorem]{Remark}
\newcommand{\secref}[1]{\S\ref{#1}}
\numberwithin{equation}{section} \setcounter{tocdepth}{1}
\newcommand{\surj}{\to\kern-1.8ex\to}
\newcommand{\lto}{\longrightarrow}
\newcommand{\lra}[1]{\stackrel{#1}{\longrightarrow}}
\newcommand{\defeq}{\mathrel{\mathop:}=} 
\renewcommand{\(}{\left(}
\renewcommand{\)}{\right)}
\DeclarePairedDelimiterX\set[1]\{\}{%

#1
}
\newcommand{\NN}{\mathbb{N}}
\newcommand{\ZZ}{\mathbb{Z}}
\newcommand{\kk}{{\Bbbk}} 
\newcommand{\op}{{\operatorname{op}}} 
\newcommand{\e}{{\operatorname{e}}} 
\newcommand{\du}{\operatorname{d}\!}
\newcommand{\out}{{\operatorname{out}}} 
\newcommand{\inn}{{\operatorname{inn}}} 
\newcommand{\Id}{{\operatorname{Id}}} 
\newcommand{\Rep}{{\operatorname{Rep}}}
\newcommand{\MM}{{\operatorname{M}}}
\newcommand{\?}{\mbox{$-$}}
\def\cE{\mathscr{E}}
\def\cO{\mathscr{O}}
\newcommand{\cA}{\Omega}
\renewcommand{\sb}[1]{
  \left\{#1\right\}}
\newcommand{\db}[1]{
  \left\{\mkern-6mu\left\{#1\right\}\mkern-6mu\right\}}
\newcommand{\lb}[1]{
  \left\{\mkern-6mu\left\{#1\right\}\mkern-6mu\right\}}
\newcommand{\cc}[1]{
  [\mkern-3mu[#1]\mkern-3mu]}
\newcommand{\ii}[1]{
\langle\mkern-4mu\langle#1\rangle\mkern-4mu\rangle}
\newcommand{\Sym}{\operatorname{Sym}}
\newcommand{\Hom}{\operatorname{Hom}}
\newcommand{\Der}{\operatorname{Der}}
\newcommand{\DR}[2]{{\operatorname{DR}_R^{#1}\!{#2}}}
\newcommand{\DDer}{\operatorname{\mathbb{D}er}}
\newcommand{\D}{\operatorname{\mathbb{D}er}} 
\newcommand{\ev}{\operatorname{ev}}
\newcommand{\Spec}{\operatorname{Spec}}
\newcommand{\GL}{{\operatorname{GL}}}
\newcommand{\dR}{{\operatorname{DR}}}
\newcommand{\pprime}{\prime\prime}
\newcommand{\wt}[1]{\left\lvert#1\right\rvert} 
\newcommand{\tr}{{\operatorname{tr}}}
\newcommand{\commV}{\mathcal{V}}
\newcommand{\comma}{\fontshape{ui}\selectfont\text{a}}
\newcommand{\commb}{\fontshape{ui}\selectfont\text{b}}%
\newcommand{\comme}{\fontshape{ui}\selectfont\text{e}}%
\newcommand{\commf}{\fontshape{ui}\selectfont\text{f}}%
\newcommand{\commg}{\fontshape{ui}\selectfont\text{g}}%
\newcommand{\commH}{{\fontshape{ui}\selectfont\text{H}}}
\newcommand{\commX}{{\fontshape{ui}\selectfont\text{X}}}
\newcommand{\commY}{{\fontshape{ui}\selectfont\text{Y}}}
\newcommand{\commalpha}{\upalpha}
\newcommand{\commbeta}{\upbeta}
\newcommand{\lr}[1]{
  \left\{\mkern-6mu\left\{#1\right\}\mkern-6mu\right\}}
\DeclareFontFamily{U}{BOONDOX-calo}{\skewchar\font=45 }
\DeclareFontShape{U}{BOONDOX-calo}{m}{n}{
  <-> s*[1.05] BOONDOX-r-calo}{}
\DeclareFontShape{U}{BOONDOX-calo}{b}{n}{
  <-> s*[1.05] BOONDOX-b-calo}{}
\DeclareMathAlphabet{\mathcalboondox}{U}{BOONDOX-calo}{m}{n}
\SetMathAlphabet{\mathcalboondox}{bold}{U}{BOONDOX-calo}{b}{n}
\DeclareMathAlphabet{\mathbcalboondox}{U}{BOONDOX-calo}{b}{n}
\def\Wd{{\mathcalboondox{d}}} 
\def\Wi{{\mathcalboondox{i}}} 
\def\WL{{\mathcalboondox{L}}} 
\thanks{}
\begin{document}

\begin{abstract}
We introduce the notion of double Courant--Dorfman algebra 
and prove that it satisfies the so-called Kontsevich--Rosenberg principle, 
that is, a double Courant--Dorfman algebra induces Roytenberg's 
Courant--Dorfman algebras on the affine schemes parametrizing 
finite-dimensional representations of a noncommutative algebra.
The main example is given 
by the direct sum of double derivations and noncommutative differential 
1-forms, possibly twisted by a closed Karoubi--de Rham 3-form. To show 
that this basic example satisfies the required axioms, we first prove a 
variant of the Cartan identity $[L_X,L_Y]=L_{[X,Y]}$ for double 
derivations and Van den Bergh's double Schouten--Nijenhuis bracket. This 
new identity, together with noncommutative versions of the other Cartan 
identities already proved by Crawley-Boevey--Etingof--Ginzburg and Van den Bergh, establishes the differential calculus on noncommutative 
differential forms and double derivations and should be of independent 
interest. Motivated by applications in the theory of noncommutative 
Hamiltonian PDEs, we also prove a one-to-one correspondence between double 
Courant--Dorfman algebras and double Poisson vertex algebras, introduced 
by De Sole--Kac--Valeri, that are freely generated in 
degrees $0$ and $1$.\\

\noindent\textbf{Keywords:} Double Courant--Dorfman algebras, noncommutative Cartan differential calculus, double Poisson vertex algebras, double Poisson algebras, double Dorfman bracket, Kontsevich--Rosenberg principle.

\end{abstract}

\maketitle
\tableofcontents

\section{Introduction}
\label{sec:intro}

\subsection{}\label{sub:intro.intro}
Let $A$ be a (unital associative) finitely generated algebra over a ground field $\kk$ of characteristic zero and $\MM_N(\kk)$ the algebra of $N\times N$-matrices.
A standard practice to study the representation theory of $A$ is to apply geometric methods to the representation spaces $\Rep(A,N)=\Hom_{\operatorname{Alg}}(A,\MM_N(\kk))$, endowed with their canonical affine-scheme structures (see, e.g.,~\cite{Brion12}). 
This method underpins an approach to noncommutative geometry, based on the so-called Kontsevich--Rosenberg principle~\cite{Kon94a,KR00}, whereby a property of an associative algebra $A$ should be regarded as `geometric' if it induces the corresponding geometric structure on its representation schemes. 
In this paper, we define a noncommutative version of Courant--Dorfman algebras. 
We prove that they satisfy the Kontsevich--Rosenberg principle and are related to the theory of double Poisson vertex algebras.
Courant--Dorfman algebras are an algebraic generalization introduced by Roytenberg~\cite{Roy09} of Courant algebroids~\cite{LWX97}, in which the symmetric bilinear form may be degenerate (see Definition~\ref{def:CD-algebra-def-comm}).
The formalism of double Poisson vertex algebras was developed by De Sole, Kac and Valeri~\cite{DSKV15} in their study of noncommutative Hamiltonian PDEs (see~\secref{sub:intro.noncommutative-PDEs}). 

\subsection{}\label{sub:intro.Cartan-id}

The Kontsevich--Rosenberg principle has been applied effectively to a range of geometric structures, such as symplectic and Poisson structures by Crawley-Boevey, Etingof and Ginzburg~\cite{CBEG07} and Van den Bergh~\cite{VdB08}, respectively.
An essential feature of~\cite{CBEG07,VdB08} is to replace differential forms and vector fields over a manifold by noncommutative differential forms and double derivations over an associative algebra $A$ 
and develop, as much as necessary, a differential calculus for these objects in order to construct the so-called bisymplectic forms and double Poisson brackets 
(throughout this paper, we work relatively over an associative algebra $R$, but to simplify the exposition, in this introduction we will focus on the `absolute case' $R=\kk$). 
Here, a double derivation on $A$ (see~\secref{sec:nc-diff-forms-double-derivations}) is a derivation $X\colon A\to A^{\otimes 2}_\out$ on the $A$-bimodule with underlying vector space $A^{\otimes 2}_\out=A\otimes A$ (tensor product over $\kk$) and outer multiplication, defined for all $a,b,a',a''\in A$ by $a'(a\otimes b)a''\defeq(a'a)\otimes(ba'')$.

The central ingredient of Van den Bergh's double Poisson algebras~\cite[Definition 2.3.2]{VdB08} (reviewed in~\secref{sec:double-Poisson}) is a double Poisson bracket, that is, a bilinear map
\begin{equation}\label{eq:intro:double-bracket}
\db{\?,\?}\colon A\times A\to A\otimes A,
\end{equation}
that satisfies appropriate noncommutative variants of skewsymmetry, the Leibniz rule and the Jacobi identity. 
The differential calculus of~\cite{VdB08} also involved
a noncommutative version of the usual Lie bracket of vector fields, called the double Schouten--Nijenhuis bracket, 
\begin{equation}\label{eq:sub:intro.Cartan-id.3}
\db{X,Y}\in(\DDer A \otimes A)\oplus (A\otimes \DDer A),
\end{equation}
for elements $X,Y$ of the $A$-bimodule $\DDer A$ of double derivations.
It will be convenient to use the decomposition (omitting the summation signs)
\[
\lr{X,Y}=\lr{X,Y}'_l\otimes \lr{X,Y}''_l+\lr{X,Y}'_r\otimes \lr{X,Y}''_r,
\]
with $\lr{X,Y}'_l,\lr{X,Y}''_r\in\DDer_RA$, $\lr{X,Y}''_l,\lr{X,Y}'_r\in A$ (see \eqref{eq:R-linear-double-SN-bracket.2} and \eqref{eq:Sweedler.R-linear-double-SN-bracket.1}). 

Noncommutative versions of three Cartan identities on differential forms were part of this differential calculus (see~\cite[(2.7.2)]{CBEG07},~\cite[(A.2), (A.3)]{VdB08}), but a fourth one was missing, namely a noncommutative version of the identity $[L_\commX,L_\commY]=L_{[\commX,\commY]}$ involving Lie derivatives along vector fields $\commX,\commY$.
This omission was not an oversight. Indeed, it was commented by Crawley-Boevey, Etingof and Ginzburg~\cite[Remark 2.7.3]{CBEG07} that the bracket $[L_X,L_Y]$ of the Lie derivatives along $X,Y\in\DDer A$ was very likely equal to the (appropriately defined) Lie derivative with respect to $\db{X,Y}$. 
The first result of our paper (see Theorem~\ref{thm:Cartan-identities.1}) is the following version of this Cartan identity: 
\begin{equation}\label{eq:sub:intro.Cartan-id.4}
\db{L_X,L_Y}=L_{\db{X,Y}}.
\end{equation}
As anticipated by Crawley-Boevey, Etingof and Ginzburg, the proof depends on an appropriate definition of the Lie derivative with respect to $\db{X,Y}$ (see~\eqref{eq:triple-derivations.7.b}). 
Expanding the right-hand side of~\eqref{eq:sub:intro.Cartan-id.4} (see~\eqref{eq:lemma:Cartan-identities.1.2.b}), this identity can also be written as 
\begin{equation}\label{eq:sub:intro.Cartan-id.5}
\begin{aligned}
L_{\lb{X,Y}}&=L_{\lb{X,Y}'_l}\otimes\lb{X,Y}''_l+\lb{X,Y}'_r\otimes L_{\lb{X,Y}''_r}
\\
&\qquad -i_{\lb{X,Y}'_l}\otimes\du{\lb{X,Y}''_l} +\du{\lb{X,Y}'_r}\otimes i_{\lb{X,Y}''_r}.
\end{aligned}
\end{equation}
It should be noted that the last two terms on the right-hand side of~\eqref{eq:sub:intro.Cartan-id.5} have no commutative counterparts.
The identity~\eqref{eq:sub:intro.Cartan-id.4}, together with~\cite[(2.7.2)]{CBEG07} and~\cite[(A.2), (A.3)]{VdB08}, uncover a noncommutative variant of the Cartan differential calculus used in ordinary geometry, providing a positive answer to~\cite[Remark 2.7.3]{CBEG07} (see Corollary~\ref{cor:answer-Crawley-Boevey-Etingof-Ginzburg-remark.1}). Additionally,~\eqref{eq:sub:intro.Cartan-id.4} gives rise to a new Cartan identity in the corresponding `reduced' differential calculus introduced in~\cite[\S 2.8]{CBEG07} (see Proposition~\ref{prop:reduced-Cartan-identities.1}). 

The new Cartan identity~\eqref{eq:sub:intro.Cartan-id.4} is applied in particular to obtain nontrivial examples of our noncommutative Courant--Dorfman algebras (see~\secref{sec:exact-DC-alg}).
At this point we should mention that the methods of noncommutative geometry based on noncommutative differential forms and double derivations have been applied to other geometric structures, such as quasi-Poisson and quasi-symplectic geometries by Van den Bergh~\cite{VdB08,VdB08a}, Poisson vertex algebras by De Sole, Kac and Valeri~\cite{DSKV15} (see~\secref{sub:intro.noncommutative-PDEs}) and multiplicative Poisson vertex algebras 
by Casati and Wang~\cite{CW} and Fairon and Valeri~\cite{FV}.
We expect that~\eqref{eq:sub:intro.Cartan-id.4} (or its `reduced version'~\eqref{eq:prop:reduced-Cartan-identities.1.e}) has applications in the study of some of these geometric structures too.

\subsection{}\label{sub:intro.PDEs}
Before going into the details about Courant--Dorfman algebras, it may be helpful to recall some motivation and background. 

Poisson algebras provide a well-known approach to the dynamics of classical Hamiltonian systems, whose time evolution is governed by ordinary differential equations
\begin{equation}\label{eq:sub:intro.PDEs.1}
\frac{\du{f}}{\du{t}}=\sb{H,f},
\end{equation}
where the Hamiltonian $H$ is an element of a Poisson algebra $P$, with bracket $\sb{\?,\?}$, and $f=f(t)\in P$ is an arbitrary observable evolving with the time parameter $t$. 
Many Hamiltonian systems in classical field theory admit a similar algebraic formulation, in which the Poisson algebra is replaced by a Poisson vertex algebra (PVA), that is, a (unital associative) commutative algebra $\mathcal{V}$, equipped with a derivation $\partial$ and a `$\lambda$-bracket'
\begin{equation}\label{eq:sub:intro.PDEs.2}
\commV\otimes\commV\longrightarrow\commV[\lambda], \qquad\comma\otimes\commb \longmapsto \{\comma_\lambda\commb\} = \sum_{n\geq 0}\comma_{(n)}\commb\,\lambda^n
\end{equation}
(where $\mathcal{V}[\lambda]\defeq\mathcal{V}\otimes\kk[\lambda]$), satisfying appropriate axioms that generalize the axioms of Poisson algebras.
In this case, the evolution equation~\eqref{eq:sub:intro.PDEs.1} becomes a Hamiltonian partial differential equation (PDE) with $\sb{\?,\?}=\sb{\?{}_\lambda\?}|_{\lambda=0}$, where now the Hamiltonian is a local functional $\smallint h\in\mathcal{V}/\partial\mathcal{V}$ and $f=f(x,t)$ is interpreted as a function of some independent variables $x$ and the time parameter $t$.
This approach, and its generalization given by non-local PVAs, 
has been particularly fruitful to interpret and unify interesting integrable Hamiltonian PDEs, such as the KdV equation (see, e.g.,~\cite{BDSK09,DSK13} for details).
 
\subsection{}\label{sub:intro.Lee-filtration-and-arc-spaces}
PVAs receive their name because the quasi-classical limits of Borcherds' vertex algebras are always of this type (see, e.g., \cite{kac:vertex} for background on vertex algebras).
Every vertex algebra $V$ has a canonical decreasing filtration
\[
V=F_{0}V\supset F_{1}V\supset\dots\supset F_pV\supset F_{p+1}V\supset\cdots,
\]
known as the \emph{Li filtration} \cite{li05}, and the associated graded algebra
\begin{equation*}
\mathcal{V}=\mathrm{gr}_FV=\oplus_{p \geq 0} F_{p}V/F_{p+1}V
\end{equation*}
carries a structure of graded Poisson vertex algebra.

The canonical example of a PVA is given by the ring of functions of the \emph{arc space} of a Poisson manifold. If $P$ is a Poisson algebra, we put $J_0 = P$, and let $J$ be the unital associative commutative algebra with a derivation of degree 1, freely generated (as a differential algebra) by $J_0$.
For example, we will have $J_1 = \Omega^1_{J_0}$, the module of K\"ahler differentials of $J_0$, and $\partial\colon J_0 \rightarrow J_1$ will be the universal derivation. Then the Poisson bracket of $J_0$ can be uniquely extended to a $\lambda$-bracket on $J$ by sesquilinearity and the Leibniz rule; see \cite{arakawa}. 
In the particular case where $P = \kk[x_1,\dots,x_n]$ is a polynomial algebra, we have that $J=\kk[x_1^{(p)},\dots,x_n^{(p)}]_{p \geq 0}$ is a differential polynomial algebra.
The derivation is defined by $\partial x_i^{(p)}=x_i^{(p+1)}$ and the degrees of the generators are given by $\deg x_i^{(p)}=p$. 

It was noted in \cite{arakawa} that the quasi-classical limit of any vertex algebra is a quotient of the free algebra $J$ described in the previous paragraph.
Indeed, given a vertex algebra $V$, one considers its associated graded algebra with respect to the Li filtration, $\mathcal{V}=\mathrm{gr}_FV$. Then $P\defeq\mathcal{V}_0$ is a Poisson algebra (known as Zhu's $C_2$-quotient) and by the universal property, it follows that $\mathcal{V}$ is a quotient of the arc algebra $J$ of $P$ described above. 

\subsection{}\label{sub:intro.noncommutative-PDEs}
It is a remarkable fact that many 
integrable Hamiltonian equations admit generalizations in which the field variables take their values in a (noncommutative) associative algebra.
These generalizations become significantly richer and require, as a non-trivial step, proper definitions for concepts such as Hamiltonian, symmetry and first integral (see, e.g.,~\cite{Kupershmidt.2000,Mikhailov-Sokolov.2000,Olver-Sokolov.1998}). 
It has been observed that the intricate calculations involved in these generalizations can be properly interpreted with tools of noncommutative symplectic or Poisson geometry (see, e.g.,~\cite[\S 4.3]{CBEG07} and~\cite{ORS}). 
Just as Barakat, De Sole and Kac~\cite{BDSK09} laid down the foundations of the theory of Poisson vertex algebras aimed at the study of Hamiltonian PDEs, De Sole, Kac and Valeri~\cite{DSKV15} used tools from~\cite{VdB08} to develop a formalism of double Poisson vertex algebras with applications in the theory noncommutative Hamiltonian PDEs (see~\secref{sec:DPVA}).

Returning to the main aim of this paper, there are at least two approaches one can undertake to obtain a noncommutative version of the notion of Courant--Dorfman algebra.
As explained below, the two ways we consider are compatible with each other and satisfy the Kontsevich--Rosenberg principle. 

\subsection{}\label{sub:intro.Roytenberg-Severa}

A first approach is motivated by the \v Severa--Roytenberg one-to-one correspondence \cite{Roy00,Sev00} between  Courant algebroids and symplectic $\NN Q$-manifolds of  degree 2, that is, non-negatively graded manifolds endowed with a graded symplectic form of degree 2 and a symplectic homological vector field of degree 1. 
Following this correspondence, but applied to a graded version of the bisymplectic noncommutative differential forms introduced in~\cite{BCER12}, the first two authors found a suitable noncommutative variant of Courant algebroid, called double Courant algebroid~\cite[Definition~7.1]{ACF17} (see also~\cite{Fer16}).
In this paper, we extend this notion to cover a noncommutative analogue of the more general Courant--Dorfman algebras.
The axioms are formulated using tools, introduced by De Sole, Kac and Valeri~\cite[Remark 2.2]{DSKV15}, that are especially well suited for calculations involving noncommutative analogues of Leibniz algebras.
The resulting mathematical object, which is called double Courant--Dorfman algebra and is the central concept introduced in this work, has applications in the theory of double PVAs, as discussed in \S\ref{sub:intro.the-equivalence}. 
It is a 5-tuple $(A,E,\ii{\?,\?},\partial,\cc{\?,\?})$, where $\partial\colon A\to E$ is a derivation on an $A$-bimodule $E$, 
$\ii{\?,\?}\colon E\times E\to A\otimes A$ is a pairing (see~\secref{sec:pairings}), and
\begin{equation}\label{eq:sub:intro.Roytenberg-Severa.1}
\cc{-,-}\colon E\times E\lto (E\otimes A)\oplus (A\otimes E)
\end{equation}
is a bilinear map, called the double Dorfman bracket, satisfying appropriate variants of the axioms of a Courant--Dorfman algebra, listed in Definition~\ref{CD-Definition}. In the rest of~\secref{sec:CD-algebras}, we prove some properties about Courant--Dorfman algebras that will be used throughout the rest of the paper, and finish with a definition of an exact double Courant--Dorfman algebra. 

\subsection{}\label{sub:intro.standard-CD-alg}

A second more specific approach is to look for a noncommutative analogue of the standard exact Courant algebroid, possibly with a twist introduced by \v Severa~\cite{Sev00}. The main ingredient of this (commutative) Courant algebroid is the Dorfman bracket on the direct sum $TM\oplus T^*M$ of the tangent and the cotangent bundles to a manifold $M$, defined by 
\begin{equation}\label{eq:intro.standard-CD-alg.1}
[\commX+\commalpha,\commY+\commbeta]_{\commH}\defeq[\commX,\commY]+\WL_{\commX}\commbeta-\Wi_{\commY}\Wd{\commalpha}+\Wi_{\commX}\Wi_{\commY}\commH,
\end{equation}
for sections $\commX+\commalpha,\commY+\commbeta$ of $TM\oplus T^*M$, where $\commH$ is a closed differential 3-form twisting the bracket (see~\secref{sec:KR-exact-double-CD} for notation). 
Following the most basic ideas of~\cite{CBEG07,VdB08} explained in~\secref{sub:intro.Cartan-id}, one can formulate a noncommutative version of this prototypical Courant algebroid. We consider the double Dorfman bracket on the direct sum $\DDer A\oplus\Omega^1A$ of the $A$-bimodules of double derivations and noncommutative differential 1-forms, 
defined by
\begin{equation}\label{eq:intro.standard-CD-alg.2}
\cc{X+\alpha,Y+\beta}_H\defeq\db{X,Y}+L_X\beta-(i_Y\du{\alpha})^\sigma+i_X\iota_YH,
\end{equation}
for all $X+\alpha,Y+\beta\in\DDer A\oplus\Omega^1A$, where $\db{\?,\?}$ is the double Schouten--Nijenhuis bracket~\eqref{eq:sub:intro.Cartan-id.3}, the operators $L_X,i_X$ and $\iota_Y$ are part of the differential calculus over double derivations (see~\secref{sec:diff-calculus}), $(\?)^\sigma$ is an involution defined in~\eqref{eq:cyclic-permutation.1} (for $n=2$) and the analogue of \v Severa's twist is now given by a closed Karoubi--de Rham 3-form $H$ (see~\eqref{eq:def-Karoubi-de-Rham}). 

The second main result of this paper, split in Theorem~\ref{thm:standard-DCA} (corresponding to $H=0$) and Theorem~\ref{thm:twist-CD-algebra-double} (for arbitrary $H$), states that this construction provides a double Courant--Dorfman algebra in the sense of Definition~\ref{CD-Definition}. 
The proofs, carried out in detail in~\secref{sec:exact-DC-alg}, are parallel, but substantially more intricate than the proofs of the corresponding geometric results. 
As in commutative geometry, the full list of Cartan identities are required.
In fact, the main reason why this example was not 
considered in~\cite{ACF17} was the lack of the identity~\eqref{eq:sub:intro.Cartan-id.4} (see, however, Remark~\ref{rem:twisted-CDA.1}).
Another source of complication that does not appear in commutative geometry is the presence of the involution $(\?)^\sigma$ permuting the two factors of the tensor product of $\Omega^1A$ and $A$ in~\eqref{eq:intro.standard-CD-alg.2}; this involution is needed to prove several axioms of Definition~\ref{CD-Definition}, such as the Leibniz rule and the Jacobi identity.

\subsection{}\label{sub:intro.Lie-Rinehart}
Our main result is inspired by the following link between Poisson vertex algebras and Courant--Dorfman algebras. 
Consider a graded PVA of weight $-1$, \emph{i.e.}, a 
commutative algebra $\commV$, equipped with an $\NN$-grading $\commV=\oplus_{n \geq 0}\commV_n$ on its underlying vector space, a derivation $\partial\colon\commV\to\commV$ of weight $1$, and a $\lambda$-bracket~\eqref{eq:sub:intro.PDEs.2}
of weight $-1$, 
\emph{i.e.}, such that $\comma_{(n)}\commb\in\commV_{i+j-n-1}$ for all $\comma\in\commV_i$, $\commb\in\commV_j$.
These operations are required to satisfy sesquilinearity, the Leibniz rule, 
skew-symmetry and the Jacobi identity (see~\cite[\S 0]{DSKV15}). 
It was observed by Ekstrand and Zabzine~\cite{EZ11}, following works of Bressler \cite{bressler1}, Roytenberg \cite{Roy09} and the third author \cite{Hel09}, 
that the pair $\cO=\commV_0$, $\cE=\commV_1$, equipped with the following data, is a Courant--Dorfman algebra:
\begin{enumerate}
\item 
the symmetric bilinear form $\langle-,-\rangle\colon \cE\otimes_{\cO} \cE\to \cO$ is given by $\langle a, b\rangle = a_{(1)}b$,
\item 
the derivation $\partial\colon \cO \to \cE$
is obtained by restriction of $\partial\colon\commV\to\commV$,
\item 
the bilinear map 
$[\?,\?]\colon\cE\times\cE\to\cE$, called the \emph{Dorfman bracket}, is defined by $[\comma,\commb]=\comma_{(0)}\commb$.
\end{enumerate}
%
If $\langle\?,\?\rangle$ is non-degenerate, then one recovers the definition of Courant algebroids of \cite{Roy00}. 

Conversely, given a Courant--Dorfman algebra $(\cO,\cE,\langle\?,\?\rangle,\partial,[\?,\?])$, one canonically constructs a graded PVA $\commV$ of weight $-1$, freely generated in weights $0$ and $1$, such that $\commV_0=\cO$ and $\commV_1=\cE$, with the $\lambda$-bracket defined on generators by
\begin{equation}\label{eq:intro.lambda-commutator.generators}
\sb{\comme_\lambda\commf}\defeq[\comme,\commf]+\lambda\langle\comme,\commf\rangle,
\qquad \comme,\commf\in\cE.
\end{equation}
This gives a one-to-one correspondence between Courant--Dorfman algebras and graded Poisson vertex algebras of degree $-1$, freely generated in weights $0$ and $1$.

The above one-to-one correspondence is reminiscent of a simpler one between Lie--Rinehart algebras (also known as Lie algebroids in geometry) and graded Poisson algebras of degree $-1$, freely generated in degrees $0$ and $1$. In this case, for each Lie--Rinehart algebra, given by a commutative algebra $\cO$, an $\cO$-module $\cE$, a $\kk$-linear Lie bracket $[\?,\?]$ on $\cE$ and an anchor $\pi\colon\cO\to\Der\cE$, one canonically constructs a Poisson algebra of degree $-1$ on $P=\Sym^\bullet_\cO \cE$, by extending the Lie bracket from $P_1=\cE$ using the Leibniz rule. 

As mentioned in \secref{sub:intro.Lee-filtration-and-arc-spaces}, PVAs arise as classical limits of vertex algebras, 
thus, the latter can be viewed as quantizations or deformations of the 
former. The above correspondence between Courant--Dorfman algebras and 
PVAs appeared first in this quantized context. Indeed, in \cite{gerbes2} (see also 
their previous work \cite{MSV99} and the independent construction of \cite{bd}), the 
authors construct a gerbe of vertex algebras, known as chiral differential 
operators. The Courant--Dorfman algebra and its associated PVA of \secref{sub:intro.Lie-Rinehart}
appear as the lower degrees spaces (degrees 0 and 1) and the associated 
graded PVA of this construction.

\subsection{}\label{sub:intro.the-equivalence}

The main result of this paper is a one-to-one correspondence between double Courant--Dorfman algebras and graded double PVAs of degree $-1$, freely generated in weights $0$ and~$1$ (Theorem~\ref{theorem-CD-DPVA}).
This follows by a noncommutative version of the construction in~\secref{sub:intro.Lie-Rinehart}. 

Recall from~\secref{sub:intro.noncommutative-PDEs} that De Sole, Kac and Valeri~\cite{DSKV15} introduced double PVAs as noncommutative versions of PVAs.
The key ingredient here is the so-called double $\lambda$-bracket
\begin{equation}\label{eq:intro:double-lambda-bracket}
\db{\?{}_\lambda\?}\colon\mathcal{V}\otimes\mathcal{V}\to(\mathcal{V}\otimes\mathcal{V})[\lambda],
\qquad a\otimes b\longmapsto \lr{a{}_{\lambda}b}=\sum_{\mathit{n}\in\mathbb{N}}(a_{\mathit{n}}b)\lambda^{\mathit{n}}, 
\end{equation}
on an (associative) differential algebra $(\mathcal{V},\partial)$, where $a_{\mathit{n}}b\in\mathcal{V}\otimes\mathcal{V}$. 
This is a cross between a double bracket~\eqref{eq:intro:double-bracket} and a $\lambda$-bracket~\eqref{eq:sub:intro.PDEs.2}, which is required to satisfy noncommutative versions of sesquilinearity, the Leibniz rule, skew-symmetry and the Jacobi identity (see~\secref{sec:DPVA}). 
Adding an $\NN$-grading on the vector space underlying $\mathcal{V}$ (called `weight') and imposing the condition that the double $\lambda$-bracket~\eqref{eq:intro:double-lambda-bracket} has weight $-1$, one obtains precisely the data
needed to define a double Courant--Dorfman algebra (see~\secref{sub:intro.Roytenberg-Severa}), namely $A=\mathcal{V}_0$, $E=\mathcal{V}_1$, $\partial$ is the restriction of the given derivation on $\mathcal{V}$, $\ii{a,b}=a_{0}b$ and $\cc{a,b}=a_{1}b$. In~\secref{V-CD}, we prove that these data do satisfy the conditions of Definition~\ref{CD-Definition}. Conversely, using the identities of~Lemma~\ref{lem:CD-identities}, in~\secref{sec:DCD-DPVA} we prove that a double Courant--Dorfman algebra $(A,E,\ii{\?,\?},\partial,\cc{\?,\?})$ determines a graded double PVAs of degree $-1$, 
freely generated in weights $0$ and $1$, whose double Dorfman bracket is defined on generators $e,f\in E$ of degree $1$ by the formula (cf.~\eqref{eq:intro.lambda-commutator.generators})
\begin{equation}\label{eq:intro.double-lambda-commutator.generators}
\db{e_\lambda f}\defeq\cc{e,f}+\lambda\ii{e,f},
\qquad e,f\in E.
\end{equation}

\subsection{}\label{sub:intro.KR-principle}
Our last main result is concerned with the Kontsevich--Rosenberg principle for double Courant--Dorfman algebras
(see~\secref{sub:intro.intro}, and~\secref{sec:KR-double-CD} for details). 
To prove this principle in the context of this article, it is useful to attach an $N\times N$ matrix $(a_{ij})$ 
of formal variables 
to each $a\in A$, and give a presentation of the coordinate ring $A_N$ of the affine scheme $\Rep(A,\MM_N(\kk))$ as the commutative ring generated by $\{a_{ij}\mid a\in A,\,1\leq i,j\leq N\}$, modulo the 
relations (where we sum over repeated indices) 
\begin{equation}\label{eq:intro:KR-relations-defining-AV}
(za)_{ij}=za_{ij},\; (a+b)_{ij}=a_{ij}+b_{ij},\; (ab)_{ij}=a_{ik}b_{kj},\; 1_{ij}=\delta_{ij}\quad (z\in\kk,\, a,b\in A).
\end{equation}
A similar construction~\cite[\S 3.3]{VdB08a} applied to an $A$-bimodule $E$ determines an $A_N$-module $E_N$ generated by the set of indeterminates $\{e_{ij}\mid e\in E,1\leq i,j\leq N\}$, modulo the relations
\begin{equation}\label{eq:intro:KR-relaciones-bimodules}
(e+f)_{ij}=e_{ij}+f_{ij},\;
(ae)_{ij}=a_{ik}e_{kj},\;
(ea)_{ij}=a_{kj}e_{ik} \quad(a\in A,\, e,f\in E).
\end{equation}
These presentations 
provide a convenient way to obtain geometric structures on the representation schemes, 
and have been applied in particular by Van den Bergh~\cite[Proposition 7.5.2]{VdB08} and De Sole, Kac and Valeri~\cite[Theorem 3.22]{DSKV15} to prove that a double Poisson bracket on $A$ and a structure of double PVA on $\mathcal{V}$ induce a Poisson bracket on $A_N$ and a structure of PVA on $\mathcal{V}_N$, respectively. 
%
Applying the same techniques, combined with the identities of Lemma~\ref{lem:CD-identities}, we prove in Theorem~\ref{thm:KR-double-Courant-Dorfman} that double Courant--Dorfman algebras satisfy the Kontsevich--Rosenberg principle too. 
In the last~\secref{sec:KR-exact-double-CD}, we turn our attention to the standard exact double Courant--Dorfman algebras considered in~\secref{sub:intro.standard-CD-alg}. 
We show in Proposition~\ref{prop:KR.standard-complex-DCA.1} that for a double Courant--Dorfman algebra of this type, 
the Courant--Dorfman algebra induced by Theorem~\ref{thm:KR-double-Courant-Dorfman} is a standard exact Courant--Dorfman algebra, with Dorfman bracket given by~\eqref{eq:intro.standard-CD-alg.1}, provided that the underlying algebra $A$ is smooth in the sense of Cuntz--Quillen (see Definition~\ref{def:smoothness}). The smoothness assumption here is required to get the identification $(\DDer A)_N=\Der A_N$~\cite[Proposition 3.3.4]{VdB08a}, so we expect that a possible approach to remove this condition would be to work in the derived representation scheme~\cite[\S 5]{BKR13}. As a preliminary result to prove Proposition~\ref{prop:KR.standard-complex-DCA.1}, we also show that the contraction, the reduced contraction and the Lie derivative along double derivations studied in~\secref{sec:diff-calculus} satisfy the Kontsevich--Rosenberg principle (see Lemma~\ref{lem:KR.standard-complex-DCA.1}).

\subsection{}\label{sub:intro.the-table}

As shown in \cite{vaintrob}, there exists yet another geometric interpretation for the notion of a Lie algebroid $(\cO, \cE)$ in terms of supermanifolds: the exterior algebra $\wedge_\cO \cE$ is the algebra of functions of a symplectic supermanifold of weight $1$. Roytenberg~\cite{Roy00} reformulated this interpretation as a one-to-one correspondence in the language of $\mathbb{N}Q$-manifolds, and subsequently, the first two authors \cite{ACF15} obtained a similar one-to-one correspondence in the noncommutative realm between bisymplectic $\mathbb{N}Q$-algebras of weight 1 and double Poisson algebras.
Thus this article completes the chart below, by showing the relation between double Courant--Dorfman algebras and double Courant algebroids, noncommutative symplectic $\mathbb{N}Q$-geometries and double Poisson vertex algebras:

\begin{center}
\begin{tabular}[center]{|c|c|c|}
\hline
\textbf{Graded geometry} & \textbf{Geometry} & \textbf{Algebra} \\ 
\hline
\hline
   
\begin{tabular}{@{}c@{}}Symplectic $\mathbb{N}Q$-manifold \\  of weight 1\end{tabular}
& \begin{tabular}{@{}c@{}}Lie algebroids/  \\  Lie--Rinehart {algebras}\end{tabular}
& Poisson
\\
\hline

\begin{tabular}{@{}c@{}}Symplectic $\mathbb{N}Q$-manifolds  \\  of weight 2\end{tabular}
   & \begin{tabular}{@{}c@{}}Courant algebroids/ \\   Courant--Dorfman algebras\end{tabular}
 &  PVA
 \\ 
\hline
\hline

\begin{tabular}{@{}c@{}}Bisymplectic $\mathbb{N}Q$-algebras \\  of weight 1\end{tabular}
& \begin{tabular}{@{}c@{}}Double Lie algebroids/  \\  Double Lie--Rinehart {algebras}\end{tabular}
& Double Poisson
 \\ 
\hline
          \begin{tabular}{@{}c@{}}Bisymplectic $\mathbb{N}Q$-algebras  \\  of weight 2\end{tabular}
    &    \begin{tabular}{@{}c@{}}Double Courant algebroids/ \\  Double Courant--Dorfman algebras\end{tabular}
& Double PVA
  \\ 
\hline
\end{tabular}
\end{center}

\subsection*{Acknowledgements}
The first author is partially supported by the Spanish Ministry of Science and Innovation, through the `Severo Ochoa Programme for Centres of Excellence in R\&D' (CEX2019-000904-S) and grant PID2019-109339GB-C31.
The second author was supported by the Alexander von Humboldt Stiftung in the framework of an Alexander von Humboldt professorship endowed by the German Federal Ministry of Education and Research. 
During the initial stage of this work, he was supported by IMPA and CAPES through their postdoctorate of excellence fellowships at UFRJ. He deeply acknowledges their support and excellent working conditions.
The third author is partially supported by CNPq grant number 305688/2019-7. 

The authors wish to thank Henrique Bursztyn, Alejandro Cabrera, Maxime Fairon, Mario Garcia-Fernandez and Estanislao Herscovich for useful conversations. Special thanks are due to Daniele Valeri for very useful email correspondence. 

\section{Notation and conventions} 
\label{sec:notation}

Throughout this article, an algebra will mean an associative (not necessarily commutative) unital algebra over a base field $\kk$ of characteristic 0. 
All unadorned notation will mean over the base field $\kk$. 
In particular, $\otimes\defeq \otimes_\kk$ and $\Hom(-,-):=\Hom_\kk(-,-)$, so a `linear map' and a `bilinear map' will mean a $\kk$-linear map and a $\kk$-bilinear map, respectively. The opposite algebra and the enveloping algebra of an algebra $A$ are denoted $A^{\op}$ and $A^{\e}\defeq
A\otimes A^{\op}$, respectively. We will identify the category of $A$-bimodules and the category of (left) $A^{\e}$-modules.
We will follow the usual convention that tensor products take precedence over direct sums; so, for instance, the target space in \eqref{eq:sub:intro.Roytenberg-Severa.1} will simply be written as $E\otimes A\oplus A\otimes E$.
The sets of integers and non-negative integers will be denoted $\mathbb{Z}$ and $\mathbb{N}$, respectively.

As our work relies heavily on the seminal papers~\cite{CBEG07,CQ95,DSKV15,VdB08}, we will adopt conventions and constructions from them. 
First, we fix an algebra $R$ and often work in the category of \emph{$R$-algebras}~\cite[\S 2]{CQ95}.
An object of this category is an algebra $A$ equipped with a `structure' homomorphism $R\to A$, and a morphism $A\to B$ between two $R$-algebras is an algebra homomorphism whose composition with the structure homomorphism of $A$ is the structure homomorphism of $B$. The category of algebras is recovered in the `absolute' case $R=\kk$.

Given two $A$-bimodules $E$ and $F$, their tensor product $E\otimes F$ has two commuting $A$-bimodule structures, called the \emph{outer} and the \emph{inner} $A$-bimodule structures~\cite{CBEG07,VdB08}.
The corresponding $A$-bimodules are denoted $(E\otimes F)_{\out}$ and $(E\otimes F)_{\inn}$, with the bimodule multiplication given for all $a',a''\in A$ and $e\otimes f\in E\otimes F$ by
\begin{subequations}\label{eq:outer-inner-multiplication.1}
\begin{align}\label{eq:outer-multiplication.1}
a'(e\otimes f) a''&\defeq (a'e)\otimes (fa'')\quad  \text{ on } (E\otimes F)_\out,
\\\label{eq:inner-multiplication.1}
a'*(e\otimes f) *a''&\defeq (ea'')\otimes (a'f) \quad \text{ on } (E\otimes F)_\inn.
\end{align}\end{subequations}
In particular, the $A$-bimodule underlying $A$ determines two $A$-bimodules $A^{\otimes 2}_{\out}$ and $A^{\otimes 2}_{\inn}$ with the same underlying vector space $A^{\otimes 2}=A\otimes A$. Unless explicitly stated, we endow the tensor product $E\otimes F$ (and in particular $A^{\otimes 2}_{\out}$) with the outer $A$-bimodule structure.  
Furthermore, we make systematic use of Sweedler's notation, consisting of dropping the summation sign, that is, if $v\in E\otimes F$, we write
\begin{equation}\label{eq:Sweedler-general-convention}
v=v^{\prime}\otimes v^{\pprime}\in E\otimes F, 
\end{equation}
where $v^{\prime}\in E$ and $v^{\pprime}\in F$.

More generally~\cite[\S 1.3]{DSKV15}, given $n\in\mathbb{N}_{\geq 2}$, for every $i=0,\dots ,n-1$, we define the $i$-th left and right $A$-module structures of $A^{\otimes n}$ by
\begin{equation}\label{square-op}
\begin{aligned}
a^\prime*_i(a_1\otimes\cdots \otimes a_n)&\defeq a_1\otimes\cdots\otimes a_i\otimes a^\prime a_{i+1}\otimes\cdots\otimes a_n,
\\
(a_1\otimes\cdots\otimes a_n)*_i a^{\pprime}&\defeq a_1\otimes\cdots\otimes a_{n-i}a^{\pprime} \otimes\cdots\otimes a_n,
\end{aligned}
\end{equation}
for all $a^{\prime},a^{\pprime},a_1,\dots,a_n\in A$, 
where the index $i$ denotes the number of `jumps'. Thus the outer multiplication~\eqref{eq:outer-multiplication.1} is $a^{\prime}*_0(a_1\otimes a_2) *_0 a^{\pprime}$, whilst the inner multiplication~\eqref{eq:inner-multiplication.1} is $a^{\prime}*_1(a_1\otimes a_2)*_1 a^{\pprime}$. Similarly, 
given $A$-bimodules $E_i$ and $e_i\in E_i$ for $i=1,2,3$, we define
\begin{equation}\label{jumping-notation}
\begin{aligned}
e_1\otimes_1 (e_2\otimes e_3) &\defeq e_2\otimes e_1\otimes e_3\in E_2\otimes E_1\otimes E_3,
\\
(e_1\otimes e_2)\otimes_1 e_3&\defeq e_1\otimes e_3\otimes e_2\in E_1\otimes E_3\otimes E_2.
\end{aligned}
\end{equation}

The \emph{dual} of an $A$-bimodule $E$ is the $A$-bimodule~\cite[\S 5.3]{CBEG07}
\begin{equation}\label{eq:dual-def}
E^\vee:=\Hom_{A^\e}(E,A^{\otimes 2}_\out),
\end{equation}
where the $A$-bimodule structure is induced by the one in $A^{\otimes 2}_\inn$ (\emph{i.e.}, $(a\cdot\phi\cdot b)e=a*(\phi e)*b=(\phi^{\prime}e)b\otimes a(\phi^{\pprime}e)$, for all $\phi\in E^\vee$, $e\in E$, $a,b\in A$).

Given vector spaces $V_1,\dots,V_n$, the symmetric group $\mathbb{S}_n$ on the set $\{1,\dots,n\}$ acts on the tensor products $V_1\otimes\cdots\otimes V_n$ by permuting the factors. Using the notation of~\cite[\S 2.1]{VdB08}, 
\begin{equation}\label{eq:permtutation-action.1}
\tau_s\colon V_1\otimes\cdots\otimes V_n\lto V_{s^{-1}(1)}\otimes\cdots\otimes V_{s^{-1}(n)},
\end{equation}
is the linear map determined by an element $s\in\mathbb{S}_n$, given by the formula
\begin{equation*}
\label{eq:permtutation-action.2}
\tau_s(v)=v_{s^{-1}(1)}\otimes\cdots\otimes v_{s^{-1}(n)},
\end{equation*}
for all $v=v_1\otimes\cdots\otimes v_n$, with $v_i\in V_i$.
If $s=(1\dots n)$ is the cyclic permutation that sends $1$ to $2$, we use the notation
\begin{equation}\label{eq:cyclic-permutation.1}
\big(v_1\otimes\cdots\otimes v_{n-1}\otimes v_n\big)^\sigma\defeq v_n\otimes v_1\otimes\cdots\otimes v_{n-1}.
\end{equation}

\section{Basics on noncommutative geometry}
\label{sub:basics}

%
The basic idea of the approach to noncommutative geometry initiated in~\cite{CBEG07, VdB08} is that noncommutative differential forms and double derivations over a noncommutative algebra (see~\secref{sec:nc-diff-forms-double-derivations}) should play the roles of differential forms and vector fields over a manifold. 
This is a fruitful view point that provided the basic ingredients in Van den Bergh's construction of the so-called double Poisson structures and double Schouten--Nijenhuis bracket of polyvector fields.  Both structures and the related notion of pairing on a bimodule, reviewed in~\secref{sec:double-Poisson}--\secref{sec:pairings}, will be used throughout this paper.

\subsection{Noncommutative differential forms and double derivations}
\label{sec:nc-diff-forms-double-derivations}

Let $R$ be an algebra, $A$ an $R$-algebra (with unit $1$) and $E$ an $A$-bimodule.
A linear map $\theta\colon A\to E$ is called a \emph{derivation} if it satisfies the Leibniz rule
$\theta(ab) =(\theta a)b+a(\theta b)$ for all $a,b\in A$.
A derivation $\theta$ is called \emph{$R$-linear} (cf.~\cite[\S 2.1]{CBEG07}), or an \emph{$R$-derivation} (cf.~\cite[p. 260]{CQ95}), if it vanishes on the image of $R$. 
The vector space of $R$-linear derivations $\theta\colon A\to E$ is denoted $\Der_R(A,E)$; if $E=A$ (regarded as a bimodule over itself), we simply write $\Der_RA$.

The $A$-bimodule $\Omega^1_{R} A$ of \emph{noncommutative relative differential 1-forms} on $A$ (with respect to $R$) is the kernel of the multiplication map $A\otimes_RA\to A$. 
It carries a canonical \emph{universal derivation}, defined as the $R$-linear derivation $\du{}\colon A\to \Omega^1_{R} A$, $a\mapsto\du{a}\defeq a\otimes 1-1\otimes a$. Cuntz--Quillen \cite{CQ95} proved that the pair $(\Omega^1_{R}A,\du{})$ satisfies the following universal property: for any $R$-linear derivation $\theta\colon A\to E$, there exists a unique $A$-bimodule morphism $i_\theta\colon\Omega^1_{ R}A\to E$ such that $\theta=i_\theta\circ\du{}$. 
In other words, there exists a canonical isomorphism of $A$-bimodules
\begin{equation}\label{sub:ncdiffforms.corep}
\Der_R(A,E)\lra{\cong}\Hom_{A^\e}(\Omega^1_{R}A,E),\quad \theta\longmapsto i_\theta,
\end{equation}
that is natural in the $A$-bimodule $E$, and whose inverse is given by
$\varphi\mapsto\varphi\circ\du{}$.
If $R=\kk$, we simply write $\Omega^1 A$.

It is clear that if $A$ is finitely generated over $R$, then $\Omega^1_RA$ is finitely generated as an $A$-bimodule. Sometimes one needs a stronger condition, introduced by Cuntz--Quillen~\cite{CQ95}. 

\begin{definition}[{\cite[Definition 2.1.1]{CBEG07}}]
\label{def:smoothness}
An $R$-algebra $A$ is called \emph{smooth over $R$} if it is finitely generated as an $R$-algebra and $\Omega^1_RA$ is projective as an $A$-bimodule. 
\end{definition}

Path algebras of quivers are prototypical examples of smooth algebras, as are the more general tensor algebras $A=T_BM$ when $M$ is a finitely generated projective bimodule over an $R$-algebra $B$ that is smooth over $R$ (see \cite[Proposition 5.3(3)]{CQ95}).

The \emph{algebra of noncommutative relative differential forms} on $A$ is the tensor algebra 
\begin{equation}\label{eq:relative-nc-all-diff-forms.1}
\Omega^\bullet_RA\defeq T_A(\Omega^1_RA)=\bigoplus_{i\in\mathbb{N}}\big(\Omega^1_RA\big)^{\otimes_A i}
\end{equation}
of the $A$-bimodule $\Omega^1_RA$.
This is a differential graded algebra with $\Omega^0_R A=A$ and differential
\begin{equation}\label{eq:relative-nc-all-diff-forms.2}
\du{}\colon \Omega^\bullet_RA\lto\Omega^{\bullet+1}_RA
\end{equation}
of degree 1 given by extension of the universal derivation $\du{}\colon A\to\Omega^1_R A$ to an $R$-linear differential, that is, an $R$-linear derivation such that
\begin{equation}\label{eq:relative-nc-all-diff-forms.3}
\du{}^2=0.
\end{equation}
Note that if the algebra $A$ is finitely generated over $R$, then so is $\Omega^\bullet_R A$, since $\Omega^1_R A$ is finitely generated as an $A$-bimodule in this case.
The differential graded algebra $(\Omega^\bullet_RA,\du{})$ is universal with respect to $R$-algebra morphisms $u\colon A\to\Gamma^0$ into the degree 0 subalgebra of differential graded $R$-algebras $(\Gamma^\bullet,\du_\Gamma{})$ (see~\cite[Proposition~2.1]{CQ95}).

Since the differential graded algebras $(\Omega^\bullet_RA,\du{})$ do not give rise to interesting cohomology theories (see, e.g.,~\cite[(2.5.1)]{CBEG07}), following Karoubi, we define the \emph{Karoubi--de Rham complex} as the differential graded vector space $(\DR{\bullet}{A},\du{}_{\operatorname{DR}})$, consisting of the graded vector space 
\begin{equation}\label{eq:def-Karoubi-de-Rham}
\DR{\bullet}{A}\defeq \Omega^\bullet_RA/[\Omega^\bullet_RA,\Omega^\bullet_RA],
\end{equation}
where $[\Omega^\bullet_RA,\Omega^\bullet_RA]$ denotes the $\kk$-linear span of all graded commutators, and the differential
\begin{equation}\label{eq:def-Karoubi-de-Rham-differential}
\du{}=\du{}_{\operatorname{DR}}\colon \DR{\bullet}{ A}\to \DR{\bullet+1}{A}
\end{equation}
induced by~\eqref{eq:relative-nc-all-diff-forms.2} (if there is no possible confusion, we simply denote $\du{}_{\operatorname{DR}}$ by $\du{}$).

Taking $E=A^{\otimes 2}_{\out}$ in \eqref{sub:ncdiffforms.corep}, we obtain a canonical isomorphism of $A$-bimodules
\begin{equation}\label{eq:DDer-isom.1}
i\colon\D_R A\lra{\cong} (\Omega^1_R A)^\vee,\quad X\longmapsto i_X,
\end{equation}
where $(\Omega^1_R A)^\vee=\Hom_{A^\e}(\Omega^1_R A, A^{\otimes 2}_{\out})$ (see \eqref{eq:dual-def}), and
\begin{equation}\label{eq:double-derivation.1}
\DDer_RA\defeq\Der_R \!\big(A, A^{\otimes 2}_{\out}\big)
\end{equation}
is the $A$-bimodule of \emph{$R$-linear double derivations} of $A$.
Its $A$-bimodule structure is induced by the inner $A$-bimodule structure of $A^{\otimes 2}$, that is, for all $X\in\D_RA$, $a,b,c\in A$, 
\begin{equation}\label{eq:double-derivation.A-bimod-str.1}
(b\cdot X\cdot c)(a)=b*(Xa)*c=(X'a)c\otimes b(X''a),
\end{equation}
where we use the inner multiplication~\eqref{eq:inner-multiplication.1} and Sweedler's notation
\begin{equation}\label{eq:Sweedler-double-derivations.1}
Xa=X'a\otimes X''a\in A^{\otimes 2},
\end{equation}
for all $a\in A$, to omit the summation index. 
Note also that $Xr=0$ for $r\in R$.
%

The isomorphism~\eqref{eq:DDer-isom.1} determines another useful $A$-bimodule morphism (cf.~\cite[\S 5.3]{CBEG07})
\begin{equation}\label{eq:canonical-map.1}
\mathtt{bidual}\defeq i^\vee\circ\ev\colon\Omega^1_RA\lto(\DDer_RA)^\vee,\quad\alpha\longmapsto\alpha^\vee, 
\end{equation}
defined as the composite of the evaluation map
\begin{equation}\label{eq:canonical-map.2}
\ev\colon\Omega^1_RA\lto(\Omega^1_RA)^{\vee\vee}
\end{equation}
and the dual of the $A$-bimodule morphism~\eqref{eq:DDer-isom.1}.
Here, $(\Omega^1_RA)^{\vee\vee}$ is the double dual $((\Omega^1_RA)^\vee)^\vee$, the evaluation map is given for all $\alpha\in\Omega^1_RA$ and $\delta\in(\Omega^1_RA)^\vee$ by 
\[
\ev(\alpha)(\delta)\defeq\delta(\alpha)^\sigma
\]
(the action of $\sigma\in\mathbb{S}_2$ is needed to obtain an $A$-bimodule morphism), and the dual of~\eqref{eq:DDer-isom.1} is the $A$-bimodule isomorphism
\begin{equation}\label{eq:canonical-map.4}
i^\vee\colon(\Omega^1_RA)^{\vee\vee}\lra{\cong}(\DDer_RA)^\vee
\end{equation}
given for all $\lambda\in(\Omega^1_RA)^{\vee\vee}$ and $X\in\DDer_RA$ by
\[
(i^\vee\lambda)(X)=\lambda(i_X).
\]
It follows that the composite~\eqref{eq:canonical-map.1} is given for all $\alpha\in\Omega^1_RA$ and $X\in\DDer_RA$ by
\begin{equation}\label{eq:canonical-map.3}
\alpha^\vee(X)\defeq\mathtt{bidual}(\alpha)(X)
=(i_X\alpha)^\sigma.
\end{equation}
Moreover,~\eqref{eq:canonical-map.1} is an isomorphism if $A$ is smooth over $R$, because so is~\eqref{eq:canonical-map.2} in this case.


\subsection{Double Poisson algebras}
\label{sec:double-Poisson}
Let $R$ be an algebra and $A$ an $R$-algebra. 
An \emph{$R$-linear double bracket} \cite{VdB08} on $A$ (or simply a \emph{double bracket} if $R=\kk$) is a bilinear map
\begin{equation}\label{eq:double-bracket.1}
\lr{-,-}\colon A\times A\lto A^{\otimes 2},\quad (a,b)\longmapsto\lr{a,b},
\end{equation}
that satisfies the following identities for all $a,b,c\in A$ and $r\in R$:
\begin{subequations}
\label{eq:double-bracket}
\begin{align}
\lr{a,r}&=0,\label{eq:double-bracket.0}
\\
\lr{a,b}&=-\lr{b,a}^\sigma,\label{eq:double-bracket.a}
\\
\lr{a,bc}&=b\lr{a,c}+\lr{a,b}c, \label{eq:double-bracket.b}
\end{align}
\end{subequations}
The identities \eqref{eq:double-bracket.a} and \eqref{eq:double-bracket.b} are called \emph{skewsymmetry} and the \emph{Leibniz rule}, respectively.
Note that~\eqref{eq:double-bracket.0} and~\eqref{eq:double-bracket.b} mean that $\lr{a,-}$ is an $R$-linear double derivation in its second entry (with respect to the outer bimodule structure on $A^{\otimes 2}$). Also, \eqref{eq:double-bracket.a} and \eqref{eq:double-bracket.b} imply that $\lr{ab,c}=a*\lr{b,c}+\lr{a,c}*b$; in other words, an $R$-linear double bracket is an $R$-linear derivation in its first entry for the inner bimodule structure on $A^{\otimes 2}$. 

The analogue of the Jacobi identity for double brackets is formulated using the following extensions of the double bracket $\lr{-,-}$. Given $a\in A$, $b=b_1\otimes\cdots\otimes b_n \in A^{\otimes n}$, we define
\begin{equation}
\begin{aligned}
\lr{a,b}_L&\defeq \lr{a,b_1}\otimes b_2\otimes \cdots\otimes b_n\in A^{\otimes(n+1)},
\\
\lr{a,b}_R&\defeq b_1\otimes\cdots\otimes b_{n-1}\otimes\lr{a,b_n}\in A^{\otimes (n+1)}.
\label{extension-left}
\end{aligned}
\end{equation}

\begin{definition}[{\cite[Definition 2.3.2]{VdB08}}]
\label{def:double-Poisson algebra}
A double bracket $\lr{-,-}$ is called \emph{Poisson} if
\begin{equation}\label{double-Jacobi-Poisson}
\lr{a,\lr{b,c}}_L+\tau_{(123)}\lr{b,\lr{c,a}}_L+\tau_{(132)}\lr{c,\lr{a,b}}_L=0.
\end{equation}
An ($R$-linear) \emph{double Poisson algebra} is a pair $(A,\lr{-,-})$ consisting of an $R$-algebra $A$ and an ($R$-linear) double Poisson bracket $\lr{-,-}$ on $A$.
\end{definition}

Equation \eqref{double-Jacobi-Poisson} is called the \emph{double Jacobi identity}.
This identity was reformulated by De Sole--Kac--Valeri~\cite[Remark 2.2]{DSKV15} in a way that is particularly convenient for double Poisson vertex algebras. In this formulation, the double bracket is extended in the first entry using the `inner' tensor product~\eqref{jumping-notation}, \emph{i.e.}, 
given $a\in A$, $b=b^{\prime}\otimes b^{\pprime}\in A\otimes A$, we set
\begin{equation}\label{notation-Kac}
\begin{aligned}
\lr{b^\prime\otimes b^{\prime\prime},a}_L&\defeq \lr{b^\prime,a}\otimes_1 b^{\prime\prime}=\lr{b^\prime,a}^{\prime}\otimes b^{\prime	\prime}\otimes\lr{b^\prime,a}^{\prime\prime},
\\
\lr{b^\prime\otimes b^{\prime\prime},a}_R&\defeq b^\prime\otimes_1\lr{b^{\prime\prime},a}=\lr{b^{\prime\prime},a}^\prime\otimes b^{\prime}\otimes \lr{b^{\prime\prime},a}^{\prime\prime},
\end{aligned}
\end{equation}
where in the right-hand sides we are using Sweedler's notation~\eqref{eq:Sweedler-general-convention}, that is, for all $a,b\in A$,
\begin{equation}\label{eq:Sweedler-double-bracket}
\lr{a,b}=\lr{a,b}^\prime\otimes\lr{a,b}^{\pprime}.
\end{equation}
Then, by \eqref{notation-Kac} and \eqref{eq:double-bracket.a}, the double Jacobi identity \eqref{double-Jacobi-Poisson} is equivalent to the identity
\begin{equation}
\lr{a,\lr{b,c}}_L=\lr{\lr{a,b},c}_L+\lr{b,\lr{a,c}}_R.
\label{Jacobi-Kac}
\end{equation}

\subsection{The double Schouten--Nijenhuis bracket}
\label{sec:double-SN}

Following~\cite[\S 3.2]{VdB08}, we will now define appropriate noncommutative analogues of the Lie bracket of vector fields and, more generally, the Schouten--Nijenhuis bracket of polyvector fields, when the vector fields on a manifold are replaced by the double derivations, that is, elements of~\eqref{eq:double-derivation.1}. 
Let $R$ be an algebra and $A$ an $R$-algebra that is finitely generated over $R$.
To define the analogue of the Lie bracket for double derivations, we will use the actions
\begin{subequations}\label{eq:R-linear-double-SN-bracket.4}
\begin{align}
\tau_{(23)*} \colon\Der_R( A, A^{\otimes 3}_\out)\lra{\cong}&
\Der_R( A, A^{\otimes 2}_\out\otimes  A),\quad X\longmapsto\tau_{(23)}\circ X,
\label{eq:R-linear-double-SN-bracket.4.a}
\\
\tau_{(12)*} \colon\Der_R( A, A^{\otimes 3}_\out)\lra{\cong}&
\Der_R( A, A\otimes  A^{\otimes 2}_\out),\quad 
X\longmapsto \tau_{(12)}\circ X, \label{eq:R-linear-double-SN-bracket.4.b}
\end{align}
\end{subequations}
of the permutations $(23), (12)\in\mathbb{S}_3$, given by composing with the maps
\begin{equation}\label{eq:graded-permutations.S_3}
\begin{aligned}
\tau_{(12)}\colon A^{\otimes 3}\lto A^{\otimes 3},\quad a\otimes b\otimes c\longmapsto b\otimes a \otimes c,
\\
\tau_{(23)}\colon A^{\otimes 3}\lto A^{\otimes 3},\quad a\otimes b\otimes c\longmapsto a\otimes c\otimes b,
\end{aligned}
\end{equation}
and the evaluation maps
\begin{subequations}\label{eq:graded-eval-map.1}
\begin{align}\label{eq:graded-eval-map.1.a}
\varepsilon_l\colon(\DDer_R A)\otimes A&\lra{\cong}\Der_R( A, A^{\otimes 2}_\out\otimes A),
\\\label{eq:graded-eval-map.1.b}
\varepsilon_r\colon A\otimes(\DDer_R A)&\lra{\cong}\Der_R( A, A\otimes A^{\otimes 2}_\out),
\end{align}
\end{subequations}
given for all $a,b\in A$, $X\in\DDer_R A$ by 
\begin{subequations}\label{eq:graded-eval-map.2}
\begin{align}
\varepsilon_l(X\otimes a)(b)&=(Xb)\otimes a, \label{eq:graded-eval-map.2.a}
\\
\varepsilon_r(a \otimes X)(b)&= a\otimes (Xb). \label{eq:graded-eval-map.2.b}
\end{align}
\end{subequations}
Note that~\eqref{eq:R-linear-double-SN-bracket.4} are isomorphisms because so are~\eqref{eq:graded-permutations.S_3}, and~\eqref{eq:graded-eval-map.1} are isomorphisms because $A$ is finitely generated over $R$.
By direct calculation~\cite[Proposition 3.2.1]{VdB08}, one can show that the following formulae define $R$-linear triple derivations for all $X,Y\in\DDer_RA$:
\begin{subequations}\label{eq:Schouten-Nijenhuis.3}
\begin{gather}\label{eq:Schouten-Nijenhuis.3.a}
\db{X,Y}_l^{\sim}\defeq(X\otimes\Id_A)\circ Y - (\Id_A\otimes Y)\circ X\colon A\lto A^{\otimes 3}_\out,
\\\label{eq:Schouten-Nijenhuis.3.b}
\db{X,Y}_r^{\sim}\defeq(\Id_A\otimes X)\circ Y - (Y\otimes\Id_A)\circ X\colon A\lto A^{\otimes 3}_\out.
\end{gather}\end{subequations}
By inspection, $\db{X,Y}_l^{\sim}=-\db{Y,X}_r^{\sim}$.
Then the double Schouten--Nijenhuis bracket 
\begin{equation}\label{eq:R-linear-double-SN-bracket.1}
\db{\?,\?}\colon\DDer_R A\times\DDer_R A\lto(\DDer_R A)\otimes A\oplus A\otimes(\DDer_R A)
\end{equation}
of $R$-linear double derivations on $A$ is given by 
\begin{equation}\label{eq:R-linear-double-SN-bracket.2}
\db{X,Y}\defeq\db{X,Y}_l+\db{X,Y}_r, 
\end{equation}
for all $X,Y\in\DDer_R A$, where
\begin{subequations}\label{eq:R-linear-double-SN-bracket.3}
\begin{gather}\label{eq:R-linear-double-SN-bracket.3.a}
\db{X,Y}_l=\varepsilon_l^{-1}\tau_{(23)*}(\db{X,Y}_l^{\sim})\in\DDer_R A\otimes A,
\\\label{eq:R-linear-double-SN-bracket.3.b}
\db{X,Y}_r=\varepsilon_r^{-1}\tau_{(12)*}(\db{X,Y}_r^{\sim})\in A\otimes\DDer_R A.
\end{gather}\end{subequations}

As in~\eqref{eq:Sweedler-double-bracket}, it is convenient to use Sweedler's notation, so we write 
\begin{subequations}\label{eq:Sweedler.R-linear-double-SN-bracket.1}
\begin{gather}\label{eq:Sweedler.R-linear-double-SN-bracket.1.a}
\db{X,Y}_l=\db{X,Y}^{\prime}_l\otimes \db{X,Y}^{\pprime}_l\in\DDer_R A\otimes A,
\\\label{eq:Sweedler.R-linear-double-SN-bracket.1.b}
\db{X,Y}_r=\db{X,Y}^{\prime}_r\otimes \db{X,Y}^{\pprime}_r\in A\otimes\DDer_R A,
\end{gather}\end{subequations}
with $\db{X,Y}^{\prime}_l,\db{X,Y}^{\pprime}_r\in\D_R A$, $\db{X,Y}^{\pprime}_l,\db{X,Y}^{\prime}_r\in A$.
Then the triple derivations \eqref{eq:Schouten-Nijenhuis.3} are given for all $a\in A$ in terms of \eqref{eq:R-linear-double-SN-bracket.3}, using the inner tensor products~\eqref{jumping-notation}, by 
\begin{subequations}\label{eq:graded-bracket.1}
\begin{gather}\label{eq:graded-bracket.1.a}
\db{X,Y}_l^\sim(a)=\tau_{(23)*}^{-1}\Big(\db{X,Y}_l'(a)\otimes\db{X,Y}_l''\Big)=\db{X,Y}_l'(a)\otimes_1\db{X,Y}_l'',
\\\label{eq:graded-bracket.1.b}
\db{X,Y}_r^\sim(a)=\tau_{(12)*}^{-1}\Big(\db{X,Y}_r'\otimes\db{X,Y}_r''(a)\Big)=\db{X,Y}_r'\otimes_1\db{X,Y}_r''(a).
\end{gather}\end{subequations}

The map~\eqref{eq:R-linear-double-SN-bracket.1} admits an extension to a `double Gerstenhaber bracket' on the tensor algebra $T_A(\D_R A)$, such that for all $a,b\in A$ and $X\in\D_R A$, 
\begin{equation}\label{eq:def-parte-trivial-SN}
\lr{a,b}=0,\quad \lr{X,a}=X a,
\quad \lr{a,X}=-(Xa)^\sigma,
\end{equation}
that will play the role of the Schouten--Nijenhuis bracket of polyvector fields on a manifold. 
We introduce now some notation about graded algebras to describe this structure as a graded version of a double Poisson bracket~\eqref{eq:double-bracket.1} (see~\cite[\S2.7]{VdB08}, \cite[\S 5.1]{BCER12}, \cite[\S3]{FH}).
A non-zero element $v$ of a graded vector space $V=\bigoplus_{k\in\mathbb{Z}}V^k$  has \emph{degree} $\wt{v}=k$ if $v\in V^k$.
A non-zero linear map $f\colon V\to W$ between graded vector spaces has \emph{degree} $\wt{f}=\ell$ if $f(V^k)\subset W^{k+\ell}$ for all $k\in\ZZ$.
When we write $\wt{v}$ or $\wt{f}$, we implicitly assume $v$ or $f$ is homogeneous.
We systematically use the Koszul sign rule, so the action 
\begin{equation}\label{eq:graded-permtutation-action.1}
\sigma_s\colon V_1\otimes\cdots\otimes V_n\lto V_{s^{-1}(1)}\otimes\cdots\otimes V_{s^{-1}(n)}
\end{equation}
of a permutation $s\in\mathbb{S}_n$ on the tensor product of some graded vector spaces $V_1,\ldots,V_n$ is given by
\begin{equation}\label{eq:graded-permtutation-action.2}
\sigma_s(v)\defeq(-1)^{\mathfrak{t}(v)} v_{s^{-1}(1)}\otimes\cdots\otimes v_{s^{-1}(n)},
\end{equation}
for all $v=v_1\otimes\cdots\otimes v_n$, with $v_i\in V_i$, where the sign is given by the Koszul sign rule, \emph{i.e.}, 
\begin{equation}\label{eq:Koszul-sign-rule.1}
\mathfrak{t}(v)=\sum_{\substack{i<j\\s^{-1}(i)>s^{-1}(j)}}\wt{v_{s^{-1}(i)}}\wt{v_{s^{-1}(j)}}.
\end{equation}
A \emph{graded $R$-algebra} is a graded algebra $\cA=\bigoplus_{k\in\mathbb{Z}}\cA^k$ equipped with a structure morphism $R\to\cA$ of degree 0, where $R$ is regarded as a graded algebra concentrated in degree 0. 
Given a graded $\cA$-bimodule $E$ and $\ell\in\mathbb{Z}$, an \emph{$R$-linear derivation $\theta\colon\cA\to E$ of degree $\ell$} is a linear map of degree $\ell$ such that $\theta(\alpha\beta)=(\theta\alpha)\beta+(-1)^{\ell\wt{\alpha}}\alpha(\theta\beta)$ for all $\alpha,\beta\in\cA$, that vanishes on the image of $R$. 
The graded vector space of $R$-linear derivations $\theta\colon\cA\to E$ is 
\begin{equation}\label{eq:graded-derivations.1}
\Der^\bullet_R(\cA,E)\defeq\bigoplus_{\ell\in\ZZ}\Der^\ell_R(\cA,E), 
\end{equation}
where $\Der^\ell_R(\cA,E)$ is the vector space of $R$-linear derivations $\theta\colon\cA\to E$ of degree $\ell$.
In particular, when $E=\cA^{\otimes 2}_\out$, 
we obtain the graded $\cA$-bimodule
\begin{equation}\label{eq:graded-double-derivations.1}
\DDer^\bullet_R\cA=\bigoplus_{\ell\in\ZZ}\DDer^\ell_R\cA, 
\end{equation}
where $\DDer^\ell_R\cA\defeq\Der^\ell_R(\cA,\cA^{\otimes 2}_\out)$ is the space of \emph{$R$-linear double derivations of degree $\ell$}. 

An \emph{$R$-linear double Poisson bracket of degree $\ell$} on a graded $R$-algebra $\cA$ is a graded map
\[
\lr{-,-}\colon\cA\times\cA\lto\cA\otimes\cA
\]
of degree $\ell$ that satisfies the following identities for all $r\in R$ and $\alpha,\beta,\gamma\in\cA$:
\begin{subequations}
\label{eq:def:double-Gerstenhaber-algebra.1.2}
\begin{align}
\label{eq:def:double-Gerstenhaber-algebra.1.2.0}
\db{\alpha,r}&=0,
\\
\label{eq:def:double-Gerstenhaber-algebra.1.2.a}
\db{\alpha,\beta}&=-(-1)^{(\wt{\alpha}+\ell)(\wt{\beta}+\ell)}\db{\beta,\alpha}^\sigma,
\\
\label{eq:def:double-Gerstenhaber-algebra.1.2.b}
\db{\alpha,\beta\gamma}&=\db{\alpha,\beta}\gamma+(-1)^{(\wt{\alpha}+\ell)\wt{\beta}}\beta\db{\alpha,\gamma},
\\
\begin{split}
\label{eq:def:double-Gerstenhaber-algebra.1.2.c}
\db{\alpha,\lr{\beta,\gamma}}_L&
+(-1)^{(\wt{\alpha}+\ell)(\wt{\beta}+\wt{\gamma})}\sigma_{(123)}\db{\beta, \db{\gamma,\alpha}}_L
\\&
+(-1)^{(\wt{\gamma}+\ell)(\wt{\alpha}+\wt{\beta})}\sigma_{(132)}\db{\gamma,\db{\alpha,\beta}}_L=0.
\end{split}
\end{align}
\end{subequations}
Note that~\eqref{eq:def:double-Gerstenhaber-algebra.1.2.0} and~\eqref{eq:def:double-Gerstenhaber-algebra.1.2.b} mean that $\lr{\alpha,-}\in\DDer^{\wt{\alpha}+\ell}_R\cA$, for all $\alpha\in\cA$. 

An ($R$-linear) \emph{double Poisson algebra of degree $\ell$} is a pair $(\cA,\lr{-,-})$ consisting of a graded $R$-algebra $\cA$ and an ($R$-linear) double Poisson bracket $\lr{-,-}$ of degree $\ell$ on $\cA$.
In particular, an \emph{double Gerstenhaber algebra} is a double Poisson algebras of degree $-1$. 

While Definition~\ref{def:double-Poisson algebra} is recovered from these notions when $\ell=0$, double Gerstenhaber algebras play a major role in our work due to the following result, that provides an interpretation of $T_A(\D_R A)$ as a noncommutative version of the algebra of polyvector fields. 
 
\begin{theorem}[{\cite[Theorem 3.2.2]{VdB08}}]\label{thm:VdB-Gerstenhaber-poly-der}
The definitions in~\eqref{eq:R-linear-double-SN-bracket.1} and~\eqref{eq:def-parte-trivial-SN} define a unique structure of $R$-linear double Gerstenhaber algebra on the graded tensor algebra $T_A(\D_R A)$, where $\D_R A$ is placed in degree $1$.
\end{theorem}

The $R$-linear double Gerstenhaber bracket $\lr{\?,\?}$ given by Theorem~\ref{thm:VdB-Gerstenhaber-poly-der} is called the \emph{double Schouten--Nijenhuis bracket} on $T_A(\D_R A)$.
For future calculations, we record now the skewsymmetry identity, the Leibniz rule and the double Jacobi identity for this bracket (they follow from Theorem \ref{thm:VdB-Gerstenhaber-poly-der}; see \cite[eqs. (3.4)--(3.8)]{VdB08}). 
For all $a\in A$, $X,Y,Z\in\D_RA$,
\begin{subequations}
\label{eq:double-SN-explicit}
\begin{align}
\lr{X,Y}&=-\lr{Y,X}^\sigma,
\label{eq:double-Lie-albd.axioms.double-bracket.skewsymm.1}
\\
\lr{X,aY}&=a\lr{X,Y}+(Xa)\cdot Y, 
\label{eq:double-Lie-albd.axioms.double-Leibniz.1.a}
\\
\lr{X,Ya}&=\lr{X,Y}a+Y\cdot (Xa),
\label{eq:double-Lie-albd.axioms.double-Leibniz.1.b}
\\
\lr{a,\lr{X,Y}}_L&+\sigma_{(123)}\lr{X,\lr{Y,a}}_L+\sigma_{(132)}\lr{Y,\lr{a,X}}_L=0,
\label{eq:double-Lie-albd.axioms.double-Jacobi.1-simple}
\\
\lr{X,\lr{Y,Z}}_L&+\sigma_{(123)}\lr{Y,\lr{Z,X}}_L+\sigma_{(132)}\lr{Z,\lr{X,Y}}_l=0.
\label{eq:double-Lie-albd.axioms.double-Jacobi.1}
\end{align}
\end{subequations}
Note that while~\eqref{eq:double-Lie-albd.axioms.double-bracket.skewsymm.1}--\eqref{eq:double-Lie-albd.axioms.double-Leibniz.1.b} take place in $\D_R A\otimes A\oplus A\otimes\D_R A$, \eqref{eq:double-Lie-albd.axioms.double-Jacobi.1-simple} is in $A^{\otimes 3}$, and~\eqref{eq:double-Lie-albd.axioms.double-Jacobi.1} is in $\D_R A\otimes A^{\otimes 2}\oplus A\otimes\D_R A\otimes A\oplus A^{\otimes 2}\otimes \D_R A$.
Moreover,~\eqref{eq:double-Lie-albd.axioms.double-bracket.skewsymm.1} implies
\begin{equation}\label{eq:double-Lie-albd.axioms.double-bracket.skewsymm.2}
\lb{X,Y}_r=-\lb{Y,X}_l^\sigma,
\end{equation}
so using Sweedler's notation~\eqref{eq:Sweedler.R-linear-double-SN-bracket.1}, we can take 
\begin{equation}\label{eq:double-Lie-albd.axioms.double-bracket.skewsymm.3}
\lb{X,Y}_r'=-\lb{Y,X}_l'',\quad\lb{X,Y}_r''=\lb{Y,X}_l'.
\end{equation}

\subsection{Pairings}
\label{sec:pairings}

Let $A$ be an algebra and $E$ an $A$-bimodule. Following \cite[\S 3.1]{VdB08a} (see also \cite[\S A.3]{Ke11}),
a map
\begin{equation}\label{eq:pairings-def}
\ii{-,-}\colon E\times E \lto A\otimes A
\end{equation}
is called a \emph{pairing} on $E$ if for all $e\in E$, the maps
\begin{subequations}\label{eq:pairing.1}
\begin{align}\label{eq:pairing.1.a}
\ii{e,-}\colon E&\lto A^{\otimes 2}_{\out},
\\\label{eq:pairing.1.b}
\ii{-,e}\colon E&\lto A^{\otimes 2}_{\inn},
\end{align}\end{subequations}
are $A$-bimodule morphisms. 
By~\eqref{eq:dual-def}, this pairing determines an $A$-bimodule morphism
\begin{equation}
\label{eq:anchor-double-lie-algebroid-double-CD.3.a}
(\?)^\flat\colon E\lto E^\vee,\quad e\longmapsto\ii{e,\?},
\end{equation}
The pairing is called \emph{non-degenerate} if the $A$-bimodule $E$ is a finitely generated and projective, and $(\?)^\flat$ is an isomorphism.

Using Sweedler's notation to omit the summation symbol, we will write
\[
\ii{e_1,e_2}=\ii{e_1,e_2}'\otimes \ii{e_1,e_2}'', \quad\text{with }\ii{e_1,e_2}',\ii{e_1,e_2}''\in A.
\]

A pairing is called \emph{symmetric} if
\begin{equation}\label{eq:pairing-symm}
\ii{e_1,e_2}=\ii{e_2,e_1}^\sigma,
\end{equation} 
for all $e_1,e_2\in E$ or, equivalently,
\begin{equation}\label{eq:Sweedler-pairing-symm}
\ii{e_1,e_2}'\otimes  \ii{e_1,e_2}''= \ii{e_2,e_1}''\otimes  \ii{e_2,e_1}'.
\end{equation} 

Finally, based on \eqref{extension-left} and \eqref{notation-Kac}, we can extend the pairing $\ii{-,-}$ to $E\otimes A\oplus A\otimes E$. By \eqref{jumping-notation}, we define the maps
\begin{equation} 
\label{eq:extension-pairing-first-second}
\begin{split}
\ii{e_1,-}_L\colon E\otimes A \lto A^{\otimes 3},\quad & e_2\otimes a \longmapsto \ii{e_1,e_2}\otimes a,
\\
\ii{e_1,-}_R\colon A\otimes E \lto A^{\otimes 3},\quad & a\otimes e_2 \longmapsto a\otimes  \ii{e_1,e_2},
\\
\ii{-,e_2}_L\colon E\otimes A \lto A^{\otimes 3},\quad & e_1\otimes a \longmapsto \ii{e_1,e_2}\otimes_1 a=\ii{e_1,e_2}'\otimes a\otimes \ii{e_1,e_2}'',
\\
\ii{-,e_2}_R\colon A\otimes E \lto A^{\otimes 3},\quad &a\otimes  e_1\longmapsto a\otimes_1 \ii{e_1,e_2}=\ii{e_1,e_2}'\otimes a\otimes \ii{e_1,e_2}''.
\end{split}
\end{equation}
Furthermore, we require that
\begin{equation}\label{eq:ext-pairing-zero}
\ii{e_1,a\otimes e_2}_L=\ii{e_1,e_2\otimes a}_R=\ii{a\otimes e_1,e_2}_L=\ii{e_1\otimes a,e_2}_R=0,
\end{equation}
for all $a\in A$ and $e_1,e_2\in E$.
\section{Differential calculus over double derivations}
\label{sec:diff-calculus}

In this section, we construct a noncommutative version of the usual differential calculus given by the exterior derivative, the Lie derivatives and the contraction operators acting on differential forms on a manifold, where differential forms and vector fields are replaced by noncommutative differential forms and double derivations.
This calculus is used to prove noncommutative analogues of the standard Cartan identities of differential geometry (Theorem~\ref{thm:Cartan-identities.1}), completing the list of identities already obtained by Crawley-Boevey--Etingof--Ginzburg~\cite{CBEG07} and Van den Bergh~\cite{VdB08}. 
The new Cartan identity~\eqref{eq:thm:Cartan-identities.1.f} provides a key ingredient in our construction of exact Courant--Dorfman algebras in~\secref{sec:exact-DC-alg},
and is used to give a full answer to~\cite[Remark 2.7.3]{CBEG07} (see Corollary~\ref{cor:answer-Crawley-Boevey-Etingof-Ginzburg-remark.1}) and to prove a new Cartan identity in the reduced differential calculus (see~\eqref{eq:prop:reduced-Cartan-identities.1.e}). 

\subsection{Contraction operators and Lie derivatives}
\label{sub:Cartan-operators.1}

Let $R$ be an algebra and $A$ an $R$-algebra.
Our first aim is to construct the contractions and Lie derivatives of a noncommutative relative differential form on $A$ with respect to double and triple derivations. 
The constructions for double derivations was of the subject of~\cite[\S\S 2.6, 2.7]{CBEG07}. 
To deal with double and triple derivations simultaneously, we fix an integer $\ell\geq 1$ throughout~\secref{sub:Cartan-operators.1} and consider \emph{$R$-linear $\ell$-fold derivations on $A$}, that is, $R$-linear derivations
\begin{equation}\label{eq:l-fold-derivation.1}
X\colon A\lto A^{\otimes \ell}_\out,
\end{equation}
where $A^{\otimes \ell}_\out$ is the $A$-bimodule with underlying vector space $A^{\otimes \ell}$ and multiplication given for all $a',a''\in A$ and $v=a_1\otimes\cdots\otimes a_\ell\in A^{\otimes\ell}$ (with $a_1,\ldots,a_\ell\in A$) by
\[
a' v a''\defeq(a'a_1)\otimes a_2\otimes\cdots\otimes a_{\ell-1}\otimes(a_\ell a'').
\]
Thus double derivations and triple derivations correspond to $\ell=2$ and $\ell=3$, respectively.
%

We start with the isomorphism~\eqref{sub:ncdiffforms.corep} when $E=A^{\otimes\ell}_\out$, that is,
\begin{equation}\label{eq:DDDer-isom.1}
\Der_R(A,A^{\otimes\ell}_\out)\lra{\cong}\Hom_{A^\e}(\Omega^1_{R}A,A^{\otimes\ell}_\out),\quad X\longmapsto i_X.
\end{equation}
This gives a bijection between $R$-linear $\ell$-fold derivations~\eqref{eq:l-fold-derivation.1} and $A$-bimodule morphisms 
\begin{equation}\label{eq:DDDer-isom.2}
i_X\colon\Omega^1_RA\lto A^{\otimes\ell}_\out,
\end{equation}
uniquely specified in terms of the universal derivation $\du{}\colon A\to\Omega^1_RA$ by the condition 
\begin{equation}\label{eq:DDDer-isom.3}
X=i_X\circ\du{}.
\end{equation}
Since $i_X$ is an $A$-bimodule morphism, 
there exists a unique derivation of degree $-1$ 
\begin{equation}\label{eq:contraction.1}
i_X\colon\Omega^\bullet_RA\lto(\Omega^\bullet_RA)^{\otimes \ell}_\out
\end{equation}
on the tensor algebra $\Omega^\bullet_RA\defeq T_A(\Omega^1_RA)$ (see~\eqref{eq:relative-nc-all-diff-forms.1}) that vanishes on $A$ and restricts to~\eqref{eq:DDDer-isom.2}. 
In fact, the vanishing condition means the derivation~\eqref{eq:contraction.1} is $A$-linear, that is, 
\begin{equation}\label{eq:contraction.2}
i_X\in\Der^{-1}_A\big(\Omega^\bullet_RA,(\Omega^\bullet_RA)^{\otimes \ell}_\out\big),
\end{equation}
for all $X\in\Der_R(A,A^{\otimes\ell}_\out)$.
Therefore we have extended~\eqref{eq:DDDer-isom.1} to an $A$-bimodule morphism 
\begin{equation}\label{eq:contraction.3}
i\colon\Der_R(A,A^{\otimes\ell}_\out)\lto\Der^{-1}_A\big(\Omega^\bullet_RA,(\Omega^\bullet_RA)^{\otimes\ell}_\out\big),\quad X\longmapsto i_X,
\end{equation}
where $i_X$ is the unique derivation of degree $-1$ such that for all $a\in A$,$\!\!$
\begin{subequations}\label{eq:contraction.4}
\begin{align}\label{eq:contraction.4.a}
i_Xa&=0,
\\\label{eq:contraction.4.b}
i_X(\du{a})&=Xa.
\end{align}\end{subequations}
The explicit formula for $i_X$ in~\cite[(2.6.2)]{CBEG07} (for $\ell=2$) extends for arbitrary $\ell\geq 1$, that is, for all $\alpha_1,\ldots,\alpha_n\in\Omega^1_RA$, 
\begin{equation}\label{eq:contraction.7}
i_X(\alpha_1\cdots\alpha_n)=\sum_{k=1}^n(-1)^{k-1}\alpha_1\cdots\alpha_{k-1}(i_X\alpha_k)\alpha_{k+1}\cdots\alpha_n.
\end{equation}

Next, we consider the tensor algebra $T_\kk(\Omega^\bullet_RA)$ of the vector space underlying $\Omega^\bullet_RA$ as a differential graded $R$-algebra, with multiplication and grading respectively given by
\begin{subequations}\label{eq:tensor-algebra-over-base-field.1}
\begin{align}\label{eq:tensor-algebra-over-base-field.1.a}
\alpha\beta&\defeq\alpha_1\otimes\cdots\otimes\alpha_{m-1}\otimes(\alpha_m\beta_1)\otimes\beta_2\otimes\cdots\otimes\beta_n,
\\\label{eq:tensor-algebra-over-base-field.1.b}
\wt{\alpha}&\defeq\wt{\alpha_1}+\cdots+\wt{\alpha_m},
\end{align}
\end{subequations}
for all $\alpha=\alpha_1\otimes\cdots\otimes\alpha_m, \beta=\beta_1\otimes\cdots\otimes\beta_n\in T_\kk(\Omega^\bullet_RA)$ with $\alpha_i,\beta_j\in\Omega^\bullet_RA$, and derivation
\begin{equation}\label{eq:extended-differential.1}
\du{}\colon T_\kk(\Omega^\bullet_RA)\lto T_\kk(\Omega^\bullet_RA)
\end{equation}
of degree $1$ obtained extending the universal differential~\eqref{eq:relative-nc-all-diff-forms.2} using the graded Leibniz rule 
\begin{equation}\label{eq:extended-differential.2}
\du{(\alpha\beta)}=(\du{\alpha})\beta+(-1)^{\wt{\alpha}}\alpha(\du{\beta}),  
\end{equation}
for all $\alpha,\beta\in T_\kk(\Omega^\bullet_RA)$ (cf.~\cite[\S 2.7]{CBEG07}). 
Note that~\eqref{eq:extended-differential.1} is an $R$-linear differential, that is, it is an $R$-linear derivation of degree $1$ (because so is the map~\eqref{eq:relative-nc-all-diff-forms.2}) and satisfies 
\begin{equation}\label{eq:extended-differential.3}
\du{}^2=0
\end{equation}
(by~\eqref{eq:relative-nc-all-diff-forms.3}). 
Furthermore, given $X\in\Der_R(A,A^{\otimes\ell}_\out)$, we extend the $A$-linear derivation~\eqref{eq:contraction.1} of degree $-1$ to another $A$-linear derivation of degree $-1$ (cf.~\cite[\S 2.7]{CBEG07})
\begin{equation}\label{eq:contraction.5}
i_X\colon T_\kk(\Omega^\bullet_RA)\lto T_\kk(\Omega^\bullet_RA),
\end{equation}
so that it satisfies the following Leibniz rule for all $\alpha,\beta\in T_\kk(\Omega^\bullet_RA)$:
\begin{equation}\label{eq:contraction.6}
i_X(\alpha\beta)=(i_X\alpha)\beta+(-1)^{\wt{\alpha}}\alpha(i_X\beta).
\end{equation}
By analogy with geometry, the maps~\eqref{eq:DDDer-isom.2}, \eqref{eq:contraction.1}, \eqref{eq:contraction.5} are called the \emph{contraction} with $X$.

Using the extended maps~\eqref{eq:extended-differential.1} and~\eqref{eq:contraction.5}, for any $X\in\Der_R(A,A^{\otimes\ell}_\out)$, we define now
\begin{equation}\label{eq:Lie-derivative.1}
L_X\colon T_\kk(\Omega^\bullet_RA)\lto T_\kk(\Omega^\bullet_RA)
\end{equation}
by the Cartan homotopic formula 
\begin{equation}\label{eq:Lie-derivative.2}
L_X=\du{}\circ i_X+i_X\circ\du{}.
\end{equation}
One can show by a short direct calculation based on the Leibniz rules~\eqref{eq:extended-differential.2} and~\eqref{eq:contraction.6} that the operator given by~\eqref{eq:Lie-derivative.2} is an $R$-linear derivation of degree $0$, that is, 
\begin{equation}\label{eq:Lie-derivative.3}
L_X\in\Der^0_R T_\kk(\Omega^\bullet_RA).
\end{equation}
Furthermore,~\eqref{eq:extended-differential.3} and~\eqref{eq:Lie-derivative.2} trivially imply (cf.~\cite[p. 283]{CBEG07})
\begin{equation}\label{eq:Lie-derivative.7}
\du{}\circ L_X=L_X\circ\du{}.
\end{equation}
We note now that~\eqref{eq:extended-differential.1} restricts to a map $\du{}\colon(\Omega^\bullet_RA)^{\otimes \ell}\to (\Omega^\bullet_RA)^{\otimes \ell}$, so it follows from the definition~\eqref{eq:Lie-derivative.2} that~\eqref{eq:Lie-derivative.1} restricts to another map 
\begin{equation}\label{eq:Lie-derivative.4}
L_X\colon \Omega^\bullet_RA\lto (\Omega^\bullet_RA)^{\otimes \ell}_\out.
\end{equation}
Then~\eqref{eq:Lie-derivative.3} implies that the map~\eqref{eq:Lie-derivative.4} is an $R$-linear $\ell$-fold derivation of degree $0$, \emph{i.e.},
\begin{equation}\label{eq:Lie-derivative.5-rep}
L_X\in\Der^0_R\big(\Omega^\bullet_RA,(\Omega^\bullet_RA)^{\otimes \ell}_\out\big).
\end{equation}
Being a derivation of degree $0$, it restricts further, in degrees $0$ and $1$, to maps
\begin{subequations}\label{eq:Lie-derivative.5}
\begin{alignat}{3}\label{eq:Lie-derivative.5.a}
&L_X\colon&\; A&\lto A^{\otimes \ell},\quad &a&\longmapsto i_X(\du{a}),
\\\label{eq:Lie-derivative.5.b}
&L_X\colon&\; \Omega^1_RA&\lto \bigoplus_{0\leq k\leq\ell-1}A^{\otimes k}\otimes\Omega^1_RA\otimes A^{\otimes\ell-k-1},
\quad &\alpha&\longmapsto \du{(i_X\alpha)}+i_X(\du{\alpha})
\end{alignat}\end{subequations}
(we have used~\eqref{eq:contraction.4.a} in~\eqref{eq:Lie-derivative.5.a}).
By analogy with geometry, the maps \eqref{eq:Lie-derivative.1}, \eqref{eq:Lie-derivative.4}, \eqref{eq:Lie-derivative.5} are called the \emph{Lie derivative} along $X$. 
Since the algebra $\Omega^\bullet_RA$ is generated over $A$ by $\Omega^1_RA$ and the $A$-bimodule $\Omega^1_RA$ is generated over $A$ by the image of the universal derivation $\du{}\colon A\to\Omega^1_RA$, these maps are uniquely specified by their restriction to generators $a\in A$ and $\du{a}\in\Omega^1_RA$. By~\eqref{eq:contraction.4}, \eqref{eq:extended-differential.3} and~\eqref{eq:Lie-derivative.5.a}, this restriction is given for all $a\in A$ by
\begin{subequations}\label{eq:Lie-derivative.6}
\begin{align}\label{eq:Lie-derivative.6.a}
L_Xa&=i_X(\du{a})=Xa,
\\\label{eq:Lie-derivative.6.b}
L_X(\du{a})&=\du{(Xa)}.
\end{align}\end{subequations}
In the important case of double derivations, \emph{i.e.}, $\ell=2$,~\eqref{eq:Lie-derivative.5} become
\begin{subequations}\label{eq:Lie-derivative.8.}
\begin{align}\label{eq:Lie-derivative.8.a}
L_X\colon A&\lto A^{\otimes 2},
\\\label{eq:Lie-derivative.8.b}
L_X\colon\Omega^1_RA&\lto\Omega^1_RA\otimes A\oplus A\otimes\Omega^1_RA.
\end{align}\end{subequations}
%


We collect now some notation, used in~\secref{sec:exact-DC-alg}, for the Lie derivative and the contraction of low-degree noncommutative differential forms with respect to double derivations (the reader may prefer to skip this part initially and refer back as necessary). 
Since the Lie derivative and the contraction map have degrees $0$ and $-1$, respectively, their actions on 1- and 2-forms admit decompositions
\begin{subequations}\label{eq:Sweedler-on-Cartan-calculus.1}
\begin{align}\label{eq:Sweedler-on-Cartan-calculus.1.a}
L_X\alpha&=L^l_X\alpha+L^r_X\alpha\in\Omega^1_RA\otimes A\oplus A\otimes\Omega^1_RA,
\\\label{eq:Sweedler-on-Cartan-calculus.1.b}
i_X\omega&=i^l_X\omega+i^r_X\omega\in \Omega^1_RA\otimes A\oplus A\otimes\Omega^1_RA,
\end{align}\end{subequations}
for all  $X\in\D_R A$, $\alpha\in\Omega^1_RA$ and $\omega\in\Omega^2_RA$, where
\begin{equation}\label{eq:Sweedler-on-Cartan-calculus.2}
L^l_X\alpha,i^l_X\omega\in\Omega^1_RA\otimes A,
\quad
L^r_X\alpha,i^r_X\omega\in A\otimes\Omega^1_RA.
\end{equation}
Using Sweedler's notation to omit summation symbols, we will write 
\begin{equation}\label{eq:Sweedler-on-Cartan-calculus.3}\begin{gathered}
i_X\alpha=i'_X\alpha\otimes i''_X\alpha\in A\otimes A,
\\
L^l_X\alpha=L^{l'}_X\alpha\otimes L^{l''}_X\alpha\in\Omega^1_RA\otimes A, \qquad 
L^r_X\alpha=L^{r'}_X\alpha\otimes L^{r''}_X\alpha\in A\otimes\Omega^1_RA,
\\
i^l_X\omega=i^{l'}_X\omega\otimes i^{l''}_X\omega\in\Omega^1_RA\otimes A,\qquad 
i^r_X\omega=i^{r'}_X\omega\otimes i^{r''}_X\omega\in A\otimes\Omega^1_RA,
\end{gathered}\end{equation}
where
\begin{equation}\label{eq:Sweedler-on-Cartan-calculus.4}
i'_X\alpha,i''_X\alpha,i^{l''}_X\omega,i^{r'}_X\omega,L^{l''}_X\alpha,L^{r'}_X\alpha\in A,
\qquad
i^{l'}_X\omega,i^{r''}_X\omega,L^{l'}_X\alpha,L^{r'}_X\alpha\in\Omega^1_RA.
\end{equation}

\subsection{The graded double Schouten--Nijenhuis brackets}
\label{sub:graded-Schouten-Nijenhuis.1}

Let $\Omega$ be a graded $R$-algebra that is finitely generated over $R$.
Then the following maps are isomorphisms (cf.~\eqref{eq:graded-eval-map.1})
\begin{subequations}\label{eq:graded-Schouten-Nijenhuis.1.1}
\begin{align}\label{eq:graded-Schouten-Nijenhuis.1.1.a}
\varepsilon_l\colon\DDer^\bullet_R\Omega{}\otimes\Omega&\lra{\cong}\Der^\bullet_R(\Omega,\Omega{}^{\otimes 2}_\out\otimes\Omega),
\\\label{eq:graded-Schouten-Nijenhuis.1.1.b}
\varepsilon_r\colon\Omega\otimes\DDer^\bullet_R\Omega{}&\lra{\cong}\Der^\bullet_R(\Omega,\Omega\otimes\Omega{}^{\otimes 2}_\out),
\end{align}
\end{subequations}
where, applying the Koszul sign rule, for all $\alpha,\beta\in\Omega$, $\xi\in\DDer^\bullet_R\Omega{}$, we define
\begin{subequations}\label{eq:graded-Schouten-Nijenhuis.1.2}
\begin{align}\label{eq:graded-Schouten-Nijenhuis.1.2.a}
\varepsilon_l(\xi\otimes\alpha)(\beta)&\defeq(-1)^{\wt{\alpha}\wt{\beta}}(\xi\beta)\otimes\alpha, 
\\\label{eq:graded-Schouten-Nijenhuis.1.2.b}
\varepsilon_r(\alpha\otimes\xi)(\beta)&\defeq\alpha\otimes (\xi\beta).
\end{align}
\end{subequations}
Furthermore, the action $\sigma_s\colon\Omega{}^{\otimes 3}\to\Omega{}^{\otimes 3}$ given by~\eqref{eq:graded-permtutation-action.2} for $s\in\mathbb{S}_3$ induces isomorphisms 
\begin{subequations}\label{eq:graded-Schouten-Nijenhuis.1.3}
\begin{align}\label{eq:graded-Schouten-Nijenhuis.1.3.a}
\sigma_{(23)*}\colon\Der^\bullet_R(\Omega,\Omega{}^{\otimes 3}_\out)\lra{\cong}&
\Der^\bullet_R(\Omega,\Omega{}^{\otimes 2}_\out\otimes\Omega),\;\;\; \xi\longmapsto\sigma_{(23)}\circ\xi,
\\\label{eq:graded-Schouten-Nijenhuis.1.3.b}
\sigma_{(12)*}\colon\Der^\bullet_R(\Omega,\Omega{}^{\otimes 3}_\out)\lra{\cong}&
\Der^\bullet_R(\Omega,\Omega\otimes\Omega{}^{\otimes 2}_\out),\;\;\;
\xi\longmapsto \sigma_{(12)}\circ\xi
\end{align}
\end{subequations}
(cf.~\eqref{eq:R-linear-double-SN-bracket.4}). 
One can also show, by a direct calculation as in~\eqref{eq:Schouten-Nijenhuis.3}, that the formulae
\begin{subequations}\label{eq:graded-Schouten-Nijenhuis.1.7}
\begin{gather}\label{eq:graded-Schouten-Nijenhuis.1.7.a}
\db{\xi,\eta}_l^{\sim}\defeq(\xi\otimes\Id_{\Omega})\circ \eta-(-1)^{\wt{\xi}\wt{\eta}}(\Id_{\Omega}\otimes \eta)\circ \xi\colon \Omega\lto\Omega{}^{\otimes 3}_\out, 
\\\label{eq:graded-Schouten-Nijenhuis.1.7.b}
\db{\xi,\eta}_r^{\sim}\defeq(\Id_{\Omega}\otimes \xi)\circ \eta-(-1)^{\wt{\xi}\wt{\eta}}(\eta\otimes\Id_{\Omega})\circ \xi\colon \Omega\lto\Omega{}^{\otimes 3}_\out, 
\end{gather}\end{subequations}
define $R$-linear derivations of degree $\wt{\xi}+\wt{\eta}$, for all $\xi,\eta\in\DDer^\bullet_R\Omega{}$, that is, 
\begin{equation}\label{eq:graded-Schouten-Nijenhuis.1.9}
\db{\xi,\eta}_l^{\sim},\db{\xi,\eta}_r^{\sim}\in\Der_R^{\wt{\xi}+\wt{\eta}}(\Omega,\Omega^{\otimes 3}_\out).
\end{equation}
As in~\eqref{eq:R-linear-double-SN-bracket.1}, combining~\eqref{eq:graded-Schouten-Nijenhuis.1.1} and~\eqref{eq:graded-Schouten-Nijenhuis.1.3}, we define the double Schouten--Nijenhuis bracket
\begin{equation}\label{eq:graded-Schouten-Nijenhuis.1.4}
\db{\?,\?}\colon\DDer^\bullet_R\Omega{}\times\DDer^\bullet_R\Omega{}\lto\DDer^\bullet_R\Omega{}\otimes\Omega\oplus\Omega\otimes\DDer^\bullet_R\Omega{}
\end{equation}
of graded $R$-linear derivations on $\Omega$, given by 
\begin{equation}\label{eq:graded-Schouten-Nijenhuis.1.5}
\db{\xi,\eta}\defeq\db{\xi,\eta}_l+\db{\xi,\eta}_r, 
\end{equation}
for all $\xi,\eta\in\DDer^\bullet_R\Omega{}$, where
\begin{subequations}\label{eq:graded-Schouten-Nijenhuis.1.6}
\begin{gather}\label{eq:graded-Schouten-Nijenhuis.1.6.a}
\db{\xi,\eta}_l=\varepsilon_l^{-1}\sigma_{(23)*}(\db{\xi,\eta}_l^{\sim})\in\DDer^\bullet_R\Omega{}\otimes \Omega,
\\\label{eq:graded-Schouten-Nijenhuis.1.6.b}
\db{\xi,\eta}_r=\varepsilon_r^{-1}\sigma_{(12)*}(\db{\xi,\eta}_r^{\sim})\in \Omega\otimes\DDer^\bullet_R\Omega{}.
\end{gather}\end{subequations}

%
%

The following result (used in Corollary~\ref{cor:answer-Crawley-Boevey-Etingof-Ginzburg-remark.1}) provides a link between the (graded) double Schouten--Nijenhuis bracket and the (graded) commutator. 
Following standard notation, given two derivations $\xi,\eta\colon T_\kk\Omega\to T_\kk\Omega$ of degrees $\wt{\xi},\wt{\eta}\in\ZZ$, respectively, of the tensor algebra $T_\kk\Omega$ of the vector space underlying $\Omega$ (cf.~\eqref{eq:tensor-algebra-over-base-field.1}), their \emph{graded commutator} is 
\begin{equation}\label{eq:graded-commutator.1}
[\xi,\eta]\defeq\xi\circ\eta-(-1)^{\wt{\xi}\wt{\eta}}\eta\circ\xi\in\Der_\kk^{\wt{\xi}+\wt{\eta}}T_\kk\Omega.
\end{equation}

\begin{lemma}\label{lem:graded-commutator.1}
For all $\xi,\eta\in\DDer^\bullet_R\Omega$, 
\begin{equation}\label{eq:graded-commutator.1.1}
[\xi,\eta]=\db{\xi,\eta}_l^\sim+\db{\xi,\eta}_r^\sim.
\end{equation}
\end{lemma}

\begin{proof}
Let $\alpha\in\Omega$. Using Sweedler's notation~\eqref{eq:Sweedler-double-derivations.1} and the Leibniz rule, we obtain 
\begin{align*}
[\xi,\eta](\alpha)&=\xi(\eta(\alpha))-(-1)^{\wt{\xi}\wt{\eta}}\eta(\xi(\alpha))
=\xi(\eta'(\alpha))\otimes\eta''(\alpha)+(-1)^{\wt{\xi}\wt{\eta'(\alpha)}}\eta'(\alpha)\otimes\xi(\eta''(\alpha))
\\&\qquad
-(-1)^{\wt{\xi}\wt{\eta}}\big(\eta(\xi'(\alpha))\otimes\xi''(\alpha)+(-1)^{\wt{\xi'(\alpha)}\wt{\eta}}\xi'(\alpha)\otimes\eta(\xi''(\alpha))\big)
\\&
=\big(\xi(\eta'(\alpha))\otimes\eta''(\alpha)-(-1)^{\wt{\xi}\wt{\eta}+\wt{\xi'(\alpha)}\wt{\eta}}\xi'(\alpha)\otimes\eta(\xi''(\alpha))\big)
\\&\qquad
+\big((-1)^{\wt{\xi}\wt{\eta'(\alpha)}}\eta'(\alpha)\otimes\xi(\eta''(\alpha))-(-1)^{\wt{\xi}\wt{\eta}}\eta(\xi'(\alpha))\otimes\xi''(\alpha)\big)
\\&
=\db{\xi,\eta}_l^\sim(\alpha)+\db{\xi,\eta}_r^\sim(\alpha).
\qedhere\end{align*}
\end{proof}

\subsection{The Cartan identities for double derivations}
\label{sub:Cartan-id}

As in~\secref{sec:double-SN}, we suppose $A$ is finitely generated over $R$.
Then the maps~\eqref{eq:R-linear-double-SN-bracket.4} and~\eqref{eq:graded-eval-map.1} determine isomorphisms (cf.~\eqref{eq:R-linear-double-SN-bracket.3})
\begin{subequations}\label{eq:triple-derivations.1}
\begin{align}\label{eq:triple-derivations.1.a}
\varepsilon_l^{-1}\circ\tau_{(23)*}\colon&
\Der_R(A,A^{\otimes 3}_\out)\lra{\cong}\DDer_RA\otimes A,
\\\label{eq:triple-derivations.1.b}
\varepsilon_r^{-1}\circ\tau_{(12)*}\colon&
\Der_R(A,A^{\otimes 3}_\out)\lra{\cong}A\otimes\DDer_RA,
\end{align}\end{subequations}
so the $R$-linear triple derivations (that will often be denoted with a tilde)
\begin{equation}\label{eq:triple-derivations.2}
\widetilde{X}\colon A\lto A^{\otimes 3}_\out  
\end{equation}
are in bijection with either of the following elements: 
\begin{subequations}\label{eq:triple-derivations.3}
\begin{align}\label{eq:triple-derivations.3.a}
X_l&\defeq(\varepsilon_l^{-1}\circ\tau_{(23)*})(\widetilde{X})\in\DDer_RA\otimes A,
\\\label{eq:triple-derivations.3.b}
X_r&\defeq(\varepsilon_r^{-1}\circ\tau_{(12)*})(\widetilde{X})\in A\otimes\DDer_RA.
\end{align}\end{subequations}
Using~\eqref{eq:triple-derivations.3}, the $R$-linear triple derivations~\eqref{eq:contraction.2} and~\eqref{eq:Lie-derivative.5-rep} (for $\ell=3$) will be denoted
\begin{subequations}\label{eq:triple-contraction.4}
\begin{gather}\label{eq:triple-contraction.4.a}
i^{l\sim}_{X_l}\defeq i^{r\sim}_{X_r}\defeq i_{\widetilde{X}}\in\Der^{-1}_R(\Omega{},\Omega{}^{\otimes 3}_\out),
\\\label{eq:triple-contraction.4.b}
L^{l\sim}_{X_l}\defeq L^{r\sim}_{X_r}\defeq L_{\widetilde{X}}\in\Der^0_R(\Omega{},\Omega{}^{\otimes 3}_\out), 
\end{gather}\end{subequations}
where $\Omega\defeq\Omega^\bullet_RA$.
Combining~\eqref{eq:graded-Schouten-Nijenhuis.1.1} and~\eqref{eq:graded-Schouten-Nijenhuis.1.3}, we get graded versions of~\eqref{eq:triple-derivations.1}, that is, 
\begin{subequations}\label{eq:triple-derivations.5}
\begin{align}\label{eq:triple-derivations.5.a}
\varepsilon_l^{-1}\circ\sigma_{(23)*}\colon&
\Der^\bullet_R(\Omega,\Omega{}^{\otimes 3}_\out)\lra{\cong}\DDer^\bullet_R\Omega{}\otimes\Omega,
\\\label{eq:triple-derivations.5.b}
\varepsilon_r^{-1}\circ\sigma_{(12)*}\colon&
\Der^\bullet_R(\Omega,\Omega{}^{\otimes 3}_\out)\lra{\cong}\Omega\otimes\DDer^\bullet_R\Omega{},
\end{align}\end{subequations}
because the graded $R$-algebra $\Omega=\Omega^\bullet_RA$ is finitely generated over $R$ (see~\secref{sub:Cartan-operators.1}), so we can convert the $R$-linear triple derivations~\eqref{eq:triple-contraction.4} into
\begin{subequations}\label{eq:triple-contraction.6}
\begin{align}\label{eq:triple-contraction.6.a}
i^l_{X_l}\defeq&(\varepsilon_l^{-1}\circ\sigma_{(23)*})(i^{l\sim}_{X_l})\in\DDer^\bullet_R\Omega{}\otimes\Omega,
\\\label{eq:triple-contraction.6.b}
i^r_{X_r}\defeq&(\varepsilon_r^{-1}\circ\sigma_{(12)*})(i^{r\sim}_{X_r})\in\Omega\otimes\DDer^\bullet_R\Omega{},
\\\label{eq:triple-contraction.6.c}
L^l_{X_l}\defeq&(\varepsilon_l^{-1}\circ\sigma_{(23)*})(L^{l\sim}_{X_l})\in\DDer^\bullet_R\Omega{}\otimes\Omega,
\\\label{eq:triple-contraction.6.d}
L^r_{X_r}\defeq&(\varepsilon_r^{-1}\circ\sigma_{(12)*})(L^{r\sim}_{X_r})\in\Omega\otimes\DDer^\bullet_R\Omega{}.
\end{align}\end{subequations}
Using~\eqref{eq:triple-contraction.6}, we now define
\begin{subequations}\label{eq:triple-derivations.7}
\begin{align}\label{eq:triple-derivations.7.a}
i_X&\defeq i^l_{X_l}+i^r_{X_r}\in\DDer^\bullet_R\Omega{}\otimes\Omega\oplus\Omega\otimes\DDer^\bullet_R\Omega{},
\\\label{eq:triple-derivations.7.b}
L_X&\defeq L^l_{X_l}+L^r_{X_r}\in\DDer^\bullet_R\Omega{}\otimes\Omega\oplus\Omega\otimes\DDer^\bullet_R\Omega{},
\end{align}\end{subequations}
for all
\begin{equation}\label{eq:triple-derivations.8}
X=X_l+X_r\in\DDer_RA\otimes A\oplus A\otimes\DDer_RA,
\end{equation}
with $X_l\in\DDer_RA\otimes A$, $X_r\in A\otimes\DDer_RA$.
Given $X=X_l+X_r$ as in~\eqref{eq:triple-derivations.8}, we also define
\begin{subequations}\label{eq:triple-derivations.10}
\begin{align}\label{eq:triple-derivations.10.a}
i^\sim_X\defeq& i^{l\sim}_{X_l}+i^{r\sim}_{X_r}\in\Der^{-1}_R(\Omega,\Omega^{\otimes 3}_\out),
\\\label{eq:triple-derivations.10.b}
L^\sim_X\defeq& L^{l\sim}_{X_l}+L^{r\sim}_{X_r}\in\Der^0_R(\Omega,\Omega^{\otimes 3}_\out).
\end{align}\end{subequations}

As in~\eqref{eq:Sweedler-double-bracket}, it is convenient to use Sweedler's notation
\begin{subequations}\label{eq:Sweedler.Cartan-id.1}
\begin{gather}\label{eq:Sweedler.R-linear-double-SN-bracket.1.a}
X_l=X'_l\otimes X''_l,
\\\label{eq:Sweedler.R-linear-double-SN-bracket.1.b}
X_r=X'_r\otimes X''_r,
\end{gather}\end{subequations}
with $X'_l,X''_r\in\DDer_RA$, $X''_l,X'_r\in A$, so applying it to the double Schouten--Nijenhuis bracket~\eqref{eq:R-linear-double-SN-bracket.1}, we recover~\eqref{eq:Sweedler.R-linear-double-SN-bracket.1}.
%
Furthermore, defining $\widetilde{X}\in\Der_R(A,A^{\otimes 3}_\out)$ in terms of $X_l$ or $X_r$ by the isomorphisms~\eqref{eq:triple-derivations.1} as in~\eqref{eq:triple-derivations.3}, it is clear that for all $a\in A$ (cf.~\eqref{eq:graded-bracket.1}) 
\begin{subequations}\label{eq:triple-derivations.9}
\begin{gather}\label{eq:triple-derivations.9.a}
\widetilde{X}(a)=\tau_{(23)}^{-1}\Big(X_l'(a)\otimes X_l''\Big)=X_l'(a)\otimes_1X_l'',
\\\label{eq:triple-derivations.9.b}
\widetilde{X}(a)=\tau_{(12)}^{-1}\Big(X_r'\otimes X_r''(a)\Big)=X_r'\otimes_1X_r''(a).
\end{gather}\end{subequations}

\begin{lemma}\label{lemma:Cartan-identities.1}
For all $\alpha\in\Omega$, $X_l\in\DDer_RA\otimes A$, $X_r\in A\otimes\DDer_RA$, we have
\begin{subequations}\label{eq:lemma:Cartan-identities.1.1}
\begin{align}\label{eq:lemma:Cartan-identities.1.1.a}
i^{l\sim}_{X_l}\alpha&=i_{X_l'}\alpha\otimes_1X_l'',
\\\label{eq:lemma:Cartan-identities.1.1.b}
i^{r\sim}_{X_r}\alpha&=X_r'\otimes_1i_{X_r''}\alpha,
\\\label{eq:lemma:Cartan-identities.1.1.c}
L_{X_l}^{l\sim}\alpha&=L_{X'_l}\alpha\otimes_1X''_l-(-1)^{\wt{\alpha}}i_{X'_l}\alpha\otimes_1\du{X''_l},
\\\label{eq:lemma:Cartan-identities.1.1.d}
L_{X_r}^{r\sim}\alpha&=X'_r\otimes_1L_{X''_r}\alpha+\du{X'_r}\otimes_1i_{X''_r}\alpha.
\end{align}\end{subequations}
Therefore the following identities hold for all $X=X_l+X_r\in\DDer_RA\otimes A\oplus A\otimes\DDer_RA$:
\begin{subequations}\label{eq:lemma:Cartan-identities.1.2}
\begin{align}\label{eq:lemma:Cartan-identities.1.2.a}
i_X&=i_{X_l'}\otimes X_l''+X_r'\otimes i_{X_r''},
\\\label{eq:lemma:Cartan-identities.1.2.b}
L_X&=L_{X_l'}\otimes X_l''+X_r'\otimes L_{X_r''}-i_{X'_l}\otimes \du{X''_l}+\du{X'_r}\otimes i_{X''_r}.
\end{align}\end{subequations}
\end{lemma}

\begin{proof}
To prove the identities~\eqref{eq:lemma:Cartan-identities.1.1.a} and~\eqref{eq:lemma:Cartan-identities.1.1.b}, we define $\widetilde{X}\in\Der_R(A,A^{\otimes 3})$ in terms of $X_l$ or $X_r$ by the isomorphisms~\eqref{eq:triple-derivations.1}, as in~\eqref{eq:triple-derivations.3}. 
Since the left-hand sides and the right-hand sides of these identities are derivations of degree $-1$ in $\alpha\in\DDer_RA$, it is sufficient to prove them on generators $\alpha\in A$ and $\alpha=\du{a}$, with $a\in A$. When $\alpha\in A$,~\eqref{eq:lemma:Cartan-identities.1.1.a} and~\eqref{eq:lemma:Cartan-identities.1.1.b} follow because the left-hand sides and the right-hand sides vanish by~\eqref{eq:contraction.4.a}. When $\alpha=\du{a}$, they follow because applying~\eqref{eq:contraction.4.b}, \eqref{eq:triple-contraction.4.a} and~\eqref{eq:triple-derivations.9}, we get
\begin{align*}
i^{l\sim}_{X_l}(\du{a})&
=i_{\widetilde{X}}(\du{a})
=\widetilde{X}(a)=X_l'(a)\otimes_1X_l''=i_{X_l'}(\du{a})\otimes_1X_l'',
\\
i^{r\sim}_{X_r}(\du{a})&
=i_{\widetilde{X}}(\du{a})
=\widetilde{X}(a)=X_r'\otimes_1X_r''(a)=X_r'\otimes_1i_{X_r''}(\du{a}).
\end{align*}
The identities~\eqref{eq:lemma:Cartan-identities.1.1.c} and~\eqref{eq:lemma:Cartan-identities.1.1.d} follow applying the Leibniz rule~\eqref{eq:extended-differential.2}, the Cartan homotopic formula~\eqref{eq:Lie-derivative.2}, \eqref{eq:lemma:Cartan-identities.1.1.a} and~\eqref{eq:lemma:Cartan-identities.1.1.b}:
\begin{align*}
\begin{split}
L^{l\sim}_{X_l}\alpha&=L_{\widetilde{X}}\alpha=\du{(i_{\widetilde{X}}\alpha)}+i_{\widetilde{X}}(\du{\alpha})
=\du{(i_{X_l'}\alpha\otimes_1X_l'')}+i_{X'_l}(\du{\alpha})\otimes_1X''_l
\\&
=\du{(i_{X'_l}\alpha)}\otimes_1X''_l-(-1)^{\wt{\alpha}}i_{X'_l}\alpha\otimes_1\du{X''_l}+i_{X'_l}(\du{\alpha})\otimes_1X''_l
\\&
=L_{X'_l}\alpha\otimes_1X''_l-(-1)^{\wt{\alpha}}i_{X'_l}\alpha\otimes_1\du{X''_l},
\end{split}
\\
\begin{split}
L^{r\sim}_{X_r}\alpha&=L_{\widetilde{X}}\alpha=\du{(i_{\widetilde{X}}\alpha)}+i_{\widetilde{X}}(\du{\alpha})
=\du{(X_r'\otimes_1i_{X_r''}\alpha)}+X_r'\otimes_1i_{X_r''}(\du{\alpha})
\\&
=\du{X'_r}\otimes_1i_{X''_r}\alpha+X'_r\otimes_1\du{(i_{X''_r}\alpha)}+X'_r\otimes_1i_{X''_r}(\du{\alpha})
\\&
=X'_r\otimes_1L_{X''_r}\alpha+\du{X'_r}\otimes_1i_{X''_r}\alpha.
\end{split}\end{align*}

To prove~\eqref{eq:lemma:Cartan-identities.1.2}, note that these equations are in $\DDer_R\Omega{}\otimes\Omega \oplus \Omega\otimes\DDer_R\Omega{}$, so they are equivalent to the equations obtained projecting them onto the two direct summands, \emph{i.e.},
\begin{subequations}\label{eq:pf:lemma:Cartan-identities.1.1}
\begin{align}\label{eq:pf:lemma:Cartan-identities.1.1.a}
i^l_{X_l}&=i_{X_l'}\otimes X_l'',
\\\label{eq:pf:lemma:Cartan-identities.1.1.b}
i^r_{X_r}&=X_r'\otimes i_{X_r''},
\\\label{eq:pf:lemma:Cartan-identities.1.1.c}
L_{X_l}^l&=L_{X'_l}\otimes X''_l-i_{X'_l}\otimes \du{X''_l},
\\\label{eq:pf:lemma:Cartan-identities.1.1.d}
L_{X_r}^r&=X'_r\otimes L_{X''_r}+\du{X'_r}\otimes i_{X''_r}.
\end{align}
\end{subequations}
The identities~\eqref{eq:pf:lemma:Cartan-identities.1.1.a} and~\eqref{eq:pf:lemma:Cartan-identities.1.1.b} are obviously equivalent to~\eqref{eq:lemma:Cartan-identities.1.1.a} and~\eqref{eq:lemma:Cartan-identities.1.1.b}, respectively. To prove~\eqref{eq:pf:lemma:Cartan-identities.1.1.c} and~\eqref{eq:pf:lemma:Cartan-identities.1.1.d}, we define $T_l\defeq L_{X'_l}\otimes X''_l-i_{X'_l}\otimes \du{X''_l}$ and $T_r\defeq X'_r\otimes L_{X''_r}+\du{X'_r}\otimes i_{X''_r}$.
By~\eqref{eq:graded-Schouten-Nijenhuis.1.2}, we have
\begin{align*}
\varepsilon_l(T_l)(\alpha)&=L_{X_l'}\alpha\otimes X''_l-(-1)^{\wt{\alpha}}i_{X'_l}\alpha\otimes \du{X''_l}, 
\\
\varepsilon_r(T_r)(\alpha)&=X'_r\otimes L_{X''_r}\alpha+\du{X'_r}\otimes i_{X''_r}\alpha, 
\end{align*}
for all $\alpha\in\Omega$, 
so applying~\eqref{eq:graded-Schouten-Nijenhuis.1.3} to these identities and using~\eqref{eq:lemma:Cartan-identities.1.1.c} and \eqref{eq:lemma:Cartan-identities.1.1.d}, we get
\begin{align*}
\sigma_{(23)*}^{-1}\varepsilon_l(T_l)(\alpha)&=L_{X_l'}\alpha\otimes_1 X''_l-(-1)^{\wt{\alpha}}i_{X'_l}\alpha\otimes_1\du{X''_l}=L_{X_l}^{l\sim}\alpha,
\\
\sigma_{(12)*}^{-1}\varepsilon_r(T_r)(\alpha)&=X'_r\otimes_1 L_{X''_r}\alpha+\du{X'_r}\otimes_1i_{X''_r}\alpha=L_{X_r}^{r\sim}\alpha,
\end{align*}
and hence the required identities~\eqref{eq:pf:lemma:Cartan-identities.1.1.c} and \eqref{eq:pf:lemma:Cartan-identities.1.1.d}, by~\eqref{eq:triple-contraction.6.c} and \eqref{eq:triple-contraction.6.d}. 
\end{proof}

The following identities will be needed in the proof of Theorem~\ref{thm:standard-DCA}.

\begin{proposition}\label{prop:Cartan-identities.1}
The following identities hold for all $X,Y\in\DDer_RA$:
\begin{subequations}\label{eq:prop:Cartan-identities.1.1}
\begin{align}\label{eq:prop:Cartan-identities.1.1.a}
\db{i_X,i_Y}_l^\sim&=0,
\\\label{eq:prop:Cartan-identities.1.1.b}
\db{i_X,i_Y}_r^\sim&=0,
\\\label{eq:prop:Cartan-identities.1.1.c}
\db{i_X,L_Y}_l^\sim&=i^{l\sim}_{\lb{X,Y}_l},
\\\label{eq:prop:Cartan-identities.1.1.d}
\db{i_X,L_Y}_r^\sim&=i^{r\sim}_{\lb{X,Y}_r},
\\\label{eq:prop:Cartan-identities.1.1.e}
\db{L_X,i_Y}_l^\sim&=\db{i_X,L_Y}_l^\sim,
\\\label{eq:prop:Cartan-identities.1.1.f}
\db{L_X,i_Y}_r^\sim&=\db{i_X,L_Y}_r^\sim,
\\\label{eq:prop:Cartan-identities.1.1.g}
\db{L_X,L_Y}_l^\sim&=L^{l\sim}_{\db{X,Y}_l},
\\\label{eq:prop:Cartan-identities.1.1.h}
\db{L_X,L_Y}_r^\sim&=L^{r\sim}_{\db{X,Y}_r}.
\end{align}\end{subequations}
\end{proposition}

\begin{proof}
Since the left and the right hand sides of the identities~\eqref{eq:prop:Cartan-identities.1.1} are (triple) derivations of the same degree by~\eqref{eq:graded-Schouten-Nijenhuis.1.9} (when they are not zero), it suffices to show that they hold when these derivations are evaluated on generators $\alpha=a\in A$ and $\alpha=\du{a}\in\Omega^1$. 

We start applying~\eqref{eq:contraction.4} to check the identities~\eqref{eq:prop:Cartan-identities.1.1.a} and~\eqref{eq:prop:Cartan-identities.1.1.b}:
\begin{align*}
\db{i_X,i_Y}_l^\sim&(a)=-\db{i_Y,i_X}_l^\sim(a)=(i_X\otimes\Id_{\Omega})i_Ya+(\Id_\Omega\otimes i_Y)i_Xa=0,
\\
\db{i_X,i_Y}_l^\sim&(\du{a})=-\db{i_Y,i_X}_l^\sim(\du{a})=(i_X\otimes\Id_{\Omega})i_Y\!\du{a}+(\Id_\Omega\otimes i_Y)i_X\!\du{a}
\\&
=(i_X\otimes\Id_{\Omega})(Ya)+(\Id_\Omega\otimes i_Y)(Xa)
=i_X(Y'a)\otimes Y''a+X'a\otimes i_Y(X''a)=0, 
\end{align*}
where we have used Sweedler's notation~\eqref{eq:Sweedler-double-derivations.1}. 
To prove~\eqref{eq:prop:Cartan-identities.1.1.c}, we use again~\eqref{eq:contraction.4}:
\begin{align*}
\db{i_X,L_Y}_l^\sim(a)&=(i_X\otimes\Id_{\Omega})L_Ya-(\Id_\Omega\otimes L_Y)i_Xa
\\&
=(i_X\otimes\Id_{\Omega})(Ya)=i_X(Y'a)\otimes Y''a=0
=i_{\lb{X,Y}^\sim_l}a=i^{l\sim}_{\lb{X,Y}_l}a,
\\
\db{i_X,L_Y}_l^\sim(\du{a})&=(i_X\otimes\Id_{\Omega})L_Y(\du{a})-(\Id_\Omega\otimes L_Y)i_X(\du{a})
\\&
=(i_X\otimes\Id_{\Omega})\du{(Ya)}-(\Id_\Omega\otimes L_Y)(Xa)
\\&
=(i_X\otimes\Id_{\Omega})(\du{(Y'a)}\otimes Y''a+Y'a\otimes\du{(Y''a)})-(\Id_\Omega\otimes L_Y)(Xa)
\\&
=i_X\du{(Y'a)}\otimes Y''a+i_X(Y'a)\otimes\du{(Y''a)}-X'a\otimes L_Y(X''a)
\\&
=X(Y'a)\otimes Y''a+0-X'a\otimes Y(X''a)=\db{X,Y}_l^\sim(a)
\\&
=i_{\db{X,Y}_l^\sim}\du{a}=i^{l\sim}_{\lb{X,Y}_l}(\du{a}). 
\end{align*}
The proof of~\eqref{eq:prop:Cartan-identities.1.1.d} is similar and will be omitted.

The identity~\eqref{eq:prop:Cartan-identities.1.1.e} follows now from~\eqref{eq:lemma:Cartan-identities.1.1.a}, \eqref{eq:lemma:Cartan-identities.1.1.b}, \eqref{eq:prop:Cartan-identities.1.1.c} and~\eqref{eq:prop:Cartan-identities.1.1.d}:
\begin{align*}
\db{L_X,i_Y}^\sim_l&=-\db{i_Y,L_X}^\sim_r=-i^{r\sim}_{\lb{Y,X}_r}=-i_{\lb{Y,X}^\sim_r}=i_{\lb{X,Y}^\sim_l}
=i^{l\sim}_{\lb{X,Y}_l}=\db{i_X,L_Y}_l^\sim.
\end{align*}
The proof of~\eqref{eq:prop:Cartan-identities.1.1.f} is similar and will be omitted.
To prove~\eqref{eq:prop:Cartan-identities.1.1.g}, we use~\eqref{eq:contraction.4} again:
\begin{align*}&
\db{L_X,L_Y}_l^\sim(a)=(L_X\otimes\Id_{\Omega})L_Ya-(\Id_{\Omega}\otimes L_Y)L_Xa
\\&\;
=(L_X\otimes\Id_{\Omega})(Ya)-(\Id_{\Omega}\otimes L_Y)(Xa)
=L_X(Y'a)\otimes Y''a-(X'a)\otimes L_Y(X''a)
\\&\;
=X(Y'a)\otimes Y''a-(X'a)\otimes Y(X''a)
=\lb{X,Y}^\sim_l(a)=L^{l\sim}_{\db{X,Y}_l}(a),
\\&
\db{L_X,L_Y}_l^\sim(\du{a})=(L_X\otimes\Id_{\Omega})L_Y(\du{a})-(\Id_{\Omega}\otimes L_Y)L_X(\du{a})
\\&\;
=(L_X\otimes\Id_{\Omega})\du{(Ya)}-(\Id_{\Omega}\otimes L_Y)\du{(Xa)}
=(L_X\otimes\Id_{\Omega})(\du{(Y'a)}\otimes Y''a+Y'a\otimes\du{(Y''a)})
\\&\quad
-(\Id_{\Omega}\otimes L_Y)(\du{(X'a)}\otimes X''a+X'a\otimes\du{(X''a)})
\\&\;
=L_X\du{(Y'a)}\otimes Y''a+L_X(Y'a)\otimes\du{(Y''a)}
-(\du{(X'a)}\otimes L_Y(X''a)+X'a\otimes L_Y\du{(X''a)})
\\&\;
=\du{(X(Y'a))}\otimes Y''a+X(Y'a)\otimes\du{(Y''a)}
-(\du{(X'a)}\otimes Y(X''a)+X'a\otimes\du{(Y(X''a))})
\\&\;
=\du{\(X(Y'a)\otimes Y''a-X'a\otimes Y(X''a))\)}
=\du{\(\lb{X,Y}^\sim_l(a)\)}
=L_{\db{X,Y}^\sim_l}(\du{a})
=L^{l\sim}_{\db{X,Y}_l}(\du{a}).
\end{align*}
The proof of~\eqref{eq:prop:Cartan-identities.1.1.h} is similar.
Alternatively, it follows from~\eqref{eq:prop:Cartan-identities.1.1.g}:
\[
\db{L_X,L_Y}_r^\sim=-\db{L_Y,L_X}_l^\sim=-L^{l\sim}_{\db{Y,X}_l}=-L_{\db{Y,X}^\sim_l}=L_{\db{X,Y}^\sim_r}=L^{r\sim}_{\db{X,Y}_r}.
\qedhere
\]
\end{proof}

We are now in a position to prove a suitable version of the Cartan identities for double derivations.
They will be the key ingredients of the proof of Theorem~\ref{thm:standard-DCA}. 

\begin{theorem}\label{thm:Cartan-identities.1}
Let $A$ be an $R$-algebra that is finitely generated over $R$. Then for all $X,Y\in\D_R A$, we have
\begin{subequations}\label{eq:thm:Cartan-identities.1}
\begin{align}\label{eq:thm:Cartan-identities.1.a}
\du{}^2&=0,
\\\label{eq:thm:Cartan-identities.1.b}
L_X&=[\du{},i_X],
\\\label{eq:thm:Cartan-identities.1.c}
[\du{},L_X]&=0,
\\\label{eq:thm:Cartan-identities.1.d}
\db{i_X,i_Y}&=0,
\\\label{eq:thm:Cartan-identities.1.e}
\db{i_X,L_Y}&=\db{L_X,i_Y}=i_{\lb{X,Y}},
\\\label{eq:thm:Cartan-identities.1.f}
\db{L_X,L_Y}&=L_{\lb{X,Y}}.
\end{align}\end{subequations}
\end{theorem}

\begin{proof}
The identities~\eqref{eq:thm:Cartan-identities.1.a}, \eqref{eq:thm:Cartan-identities.1.b} and \eqref{eq:thm:Cartan-identities.1.c} are~\eqref{eq:extended-differential.3}, \eqref{eq:Lie-derivative.2} and \eqref{eq:Lie-derivative.7}, respectively. 

The identity~\eqref{eq:thm:Cartan-identities.1.d} follows trivially from~\eqref{eq:graded-Schouten-Nijenhuis.1.5}, \eqref{eq:graded-Schouten-Nijenhuis.1.6}, \eqref{eq:prop:Cartan-identities.1.1.a} and~\eqref{eq:prop:Cartan-identities.1.1.b}:
\[
\lb{i_X,i_Y}=\lb{i_X,i_Y}_l+\db{i_X,i_Y}_r
=\varepsilon_l^{-1}\sigma_{(23)*}(\db{i_X,i_Y}_l^{\sim})+\varepsilon_r^{-1}\sigma_{(12)*}(\db{i_X,i_Y}_r^{\sim})=0.
\]

Next, we prove~\eqref{eq:thm:Cartan-identities.1.e}. 
By~\eqref{eq:graded-Schouten-Nijenhuis.1.6}, \eqref{eq:triple-contraction.6.a},  \eqref{eq:prop:Cartan-identities.1.1.c} and~\eqref{eq:prop:Cartan-identities.1.1.d}, we obtain
\begin{equation}\label{eq:proof:prop:Cartan-identities.1.5}
\db{L_X,i_Y}_l=\lb{i_X,L_Y}_l=i^l_{\lb{X,Y}_l},\quad\db{L_X,i_Y}_r=\lb{i_X,L_Y}_r=i^r_{\lb{X,Y}_r},
\end{equation}
and hence the required identity~\eqref{eq:thm:Cartan-identities.1.e}, by~\eqref{eq:R-linear-double-SN-bracket.2}, \eqref{eq:graded-Schouten-Nijenhuis.1.5} and~\eqref{eq:triple-derivations.7.a}:
\begin{align*}
\lb{i_X,L_Y}&=\lb{i_X,L_Y}_l+\lb{i_X,L_Y}_r=i^l_{\lb{X,Y}_l}+i^r_{\lb{X,Y}_r}=i_{\lb{X,Y}},
\\
\lb{L_X,i_Y}&=\lb{L_X,i_Y}_l+\lb{L_X,i_Y}_r=\lb{i_X,L_Y}_l+\lb{i_X,L_Y}_r=\lb{i_X,L_Y}.
\end{align*}

Finally, we prove~\eqref{eq:thm:Cartan-identities.1.f}. 
The left-hand side is the double Schouten--Nijenhuis bracket
\begin{equation}\label{eq:proof:prop:Cartan-identities.1.1}
\db{L_X,L_Y}=\db{L_X,L_Y}_l+\db{L_X,L_Y}_r,
\end{equation}
(see~\eqref{eq:graded-Schouten-Nijenhuis.1.5}). The two terms of the right-hand side of~\eqref{eq:proof:prop:Cartan-identities.1.1} are given by~\eqref{eq:graded-Schouten-Nijenhuis.1.6}, that is,
\begin{equation}\label{eq:proof:prop:Cartan-identities.1.2}\begin{split}
\db{L_X,L_Y}_l=\varepsilon_l^{-1}\sigma_{(23)*}\big(\db{L_X,L_Y}_l^\sim\big)\in\DDer_R\Omega\otimes\Omega,
\\
\db{L_X,L_Y}_r=\varepsilon_r^{-1}\sigma_{(12)*}\big(\db{L_X,L_Y}_r^\sim\big)\in\Omega\otimes\DDer_R\Omega.
\end{split}\end{equation}
Similarly, by~\eqref{eq:R-linear-double-SN-bracket.2} and~\eqref{eq:triple-derivations.7.b}, the right-hand side of~\eqref{eq:thm:Cartan-identities.1.f} is 
\begin{equation}\label{eq:proof:prop:Cartan-identities.1.3}
L_{\lb{X,Y}}=L^l_{\lb{X,Y}_l}+L^r_{\lb{X,Y}_r}.
\end{equation}
The two terms of the right-hand side of~\eqref{eq:proof:prop:Cartan-identities.1.3} are given by~\eqref{eq:triple-contraction.6.c} and~\eqref{eq:triple-contraction.6.d}, that is, 
\begin{equation}\label{eq:proof:prop:Cartan-identities.1.4}\begin{split}
L^l_{\lb{X,Y}_l}&=\varepsilon_l^{-1}\sigma_{(23)*}\big(L^{l\sim}_{\lb{X,Y}_l}\big)\in\DDer_R\Omega\otimes\Omega,
\\
L^r_{\lb{X,Y}_r}&=\varepsilon_r^{-1}\sigma_{(12)*}\big(L^{r\sim }_{\lb{X,Y}_r}\big)\in\Omega\otimes\DDer_R\Omega.
\end{split}\end{equation}
Now,~\eqref{eq:R-linear-double-SN-bracket.2}, \eqref{eq:prop:Cartan-identities.1.1.g}, \eqref{eq:prop:Cartan-identities.1.1.h},~\eqref{eq:proof:prop:Cartan-identities.1.2} and~\eqref{eq:proof:prop:Cartan-identities.1.4} imply 
\[
\db{L_X,L_Y}_l=L^{l}_{\lb{X,Y}_l},\quad \db{L_X,L_Y}_r=L^{r}_{\lb{X,Y}_r},
\]
and hence the required identity~\eqref{eq:thm:Cartan-identities.1.f}, by~\eqref{eq:proof:prop:Cartan-identities.1.1} and~\eqref{eq:proof:prop:Cartan-identities.1.3}.
\end{proof}

\begin{remark}\label{rem:Cartan-identities.1}
The identities~\eqref{eq:thm:Cartan-identities.1.a}--\eqref{eq:thm:Cartan-identities.1.c} were already in \cite[\S\S 2.6, 2.7]{CBEG07}, whilst~\eqref{eq:thm:Cartan-identities.1.d} and~\eqref{eq:thm:Cartan-identities.1.e} appeared in \cite[Appendix A, (A.2), (A.3)]{VdB08}, formulated as follows:
\begin{subequations}\label{eq:rem:Cartan-identities.1.1}
\begin{align}\label{eq:rem:Cartan-identities.1.1.a}
\lb{i_X,i_Y}_l&=\lb{i_X,i_Y}_r=0,
\\\label{eq:rem:Cartan-identities.1.1.b}
\lb{i_X,L_Y}_l&=i_{\lb{X,Y}'_l}\otimes\lb{X,Y}''_l,
\\\label{eq:rem:Cartan-identities.1.1.c}
\lb{i_X,L_Y}_r&=\lb{X,Y}'_r\otimes i_{\lb{X,Y}''_r}.
\end{align}\end{subequations}
The identities~\eqref{eq:rem:Cartan-identities.1.1.a} can be obtained projecting~\eqref{eq:thm:Cartan-identities.1.d} onto the two direct summands of $\DDer^\bullet_R\Omega{}\otimes\Omega\oplus\Omega\otimes\DDer^\bullet_R\Omega{}$, and~\eqref{eq:rem:Cartan-identities.1.1.b} and~\eqref{eq:rem:Cartan-identities.1.1.c} follow projecting~\eqref{eq:thm:Cartan-identities.1.e} onto the same direct summands, obtaining~\eqref{eq:proof:prop:Cartan-identities.1.5}, and using~\eqref{eq:lemma:Cartan-identities.1.2.a} (or equivalently,~\eqref{eq:pf:lemma:Cartan-identities.1.1.a} and~\eqref{eq:pf:lemma:Cartan-identities.1.1.b}). 

As far as we know, the identity~\eqref{eq:thm:Cartan-identities.1.f} (or equivalently, \eqref{eq:prop:Cartan-identities.1.1.g}--\eqref{eq:prop:Cartan-identities.1.1.h}) is new and was the only missing one needed to have a complete analogy with the usual Cartan differential calculus in geometry. 
Projecting~\eqref{eq:thm:Cartan-identities.1.f} onto the two direct summands of $\DDer^\bullet_R\Omega{}\otimes\Omega\oplus\Omega\otimes\DDer^\bullet_R\Omega{}$ and using~\eqref{eq:lemma:Cartan-identities.1.2.b}, it has a reformulation similar to~\eqref{eq:rem:Cartan-identities.1.1}, namely
\begin{subequations}\label{eq:rem:Cartan-identities.1.2}
\begin{gather}\label{eq:rem:Cartan-identities.1.2.a}
\lb{L_X,L_Y}_l=L_{\lb{X,Y}'_l}\otimes\lb{X,Y}''_l-i_{\lb{X,Y}'_l}\otimes\du{\lb{X,Y}''_l},
\\\label{eq:rem:Cartan-identities.1.2.b}
\lb{L_X,L_Y}_r=\lb{X,Y}'_r\otimes L_{\lb{X,Y}''_r}+\du{\lb{X,Y}'_r}\otimes i_{\lb{X,Y}''_r}.
\end{gather}\end{subequations}
Observe that the second terms on the right-hand sides in this formulation of the Cartan identity~\eqref{eq:thm:Cartan-identities.1.f} have no commutative counterparts.

The Cartan identity~\eqref{eq:thm:Cartan-identities.1.f} provides a complete answer to~\cite[Remark 2.7.3]{CBEG07}, given by the identity~\eqref{eq:cor:answer-Crawley-Boevey-Etingof-Ginzburg-remark.1.1.c} of the following corollary.
Note that a different proof of~\eqref{eq:cor:answer-Crawley-Boevey-Etingof-Ginzburg-remark.1.1.a} was given in~\cite[Lemma~2.6.3]{CBEG07}, but it seems~\eqref{eq:cor:answer-Crawley-Boevey-Etingof-Ginzburg-remark.1.1.b} and~\eqref{eq:cor:answer-Crawley-Boevey-Etingof-Ginzburg-remark.1.1.c} are new.
%
These identities are formulated in terms of the graded commutators~\eqref{eq:graded-commutator.1} and the triple derivations~\eqref{eq:triple-derivations.10}.
\end{remark}

\begin{corollary}\label{cor:answer-Crawley-Boevey-Etingof-Ginzburg-remark.1}
Suppose $A$ is finitely generated over $R$. Then for all $X,Y\in\D_R A$, 
\begin{subequations}\label{eq:cor:answer-Crawley-Boevey-Etingof-Ginzburg-remark.1.1}
\begin{align}\label{eq:cor:answer-Crawley-Boevey-Etingof-Ginzburg-remark.1.1.a}
[i_X,i_Y]&=0,
\\\label{eq:cor:answer-Crawley-Boevey-Etingof-Ginzburg-remark.1.1.b}
[i_X,L_Y]&=[L_X,i_Y]=i^\sim_{\lb{X,Y}},
\\\label{eq:cor:answer-Crawley-Boevey-Etingof-Ginzburg-remark.1.1.c}
[L_X,L_Y]&=L^\sim_{\lb{X,Y}}.
\end{align}\end{subequations}
\end{corollary}

\begin{proof}
These identities follow from~\eqref{eq:graded-commutator.1.1}, \eqref{eq:triple-derivations.10} and \eqref{eq:prop:Cartan-identities.1.1}:
\begin{align*}
[i_X,i_Y]&=\db{i_X,i_Y}_l^\sim+\db{i_X,i_Y}_r^\sim=0,
\\
[i_X,L_Y]&=\db{i_X,L_Y}_l^\sim+\db{i_X,L_Y}_r^\sim
=i^{l\sim}_{\db{X,Y}_l}\alpha+i^{r\sim}_{\db{X,Y}_r}\alpha
=i^\sim_{\lb{X,Y}}\alpha,
\\
[L_X,i_Y]&=\db{L_X,i_Y}_l^\sim+\db{L_X,i_Y}_r^\sim
=i^{l\sim}_{\db{X,Y}_l}\alpha+i^{r\sim}_{\db{X,Y}_r}\alpha
=i^\sim_{\lb{X,Y}}\alpha,
\\
[L_X,L_Y]&=\db{L_X,L_Y}_l^\sim+\db{L_X,L_Y}_r^\sim
=L^{l\sim}_{\db{X,Y}_l}\alpha+L^{r\sim}_{\db{X,Y}_r}\alpha=L^\sim_{\db{X,Y}}\alpha.
\;\;\qedhere\end{align*}
\end{proof}

\subsection{Reduced differential calculus over double derivations}
\label{sub:reduced-diff-calculus}

Here we obtain commutation relations involving the reduced versions introduced in \cite[\S2.8]{CBEG07} of the contractions and the Lie derivatives along double derivations. They will be widely used in \secref{sec:twisted}.

As in the rest of~\secref{sec:diff-calculus}, for a given $R$-algebra $A$, we consider the differential graded algebra $(\Omega^\bullet_R A,\du{})$ and the $A$-bimodule $\D_R A$.
%
%
Recall the Karoubi--de Rham complex $(\dR_R^\bullet A,\du{}_{\dR})$ consists of the differential graded vector space defined in \eqref{eq:def-Karoubi-de-Rham} and the de Rham differential $\du{}_{\dR}$ in~\eqref{eq:def-Karoubi-de-Rham-differential}. 
%
%
Following~\cite[A.1]{VdB08}, given $\alpha=\alpha_1\otimes\alpha_2\in (\Omega^{\bullet}_R A)^{\otimes 2}$, with $\alpha_i\in\Omega^\bullet_RA$, we define
\[
{}^\circ \alpha\defeq (-1)^{\wt{\alpha_1}\wt{\alpha_2}}\alpha_2\alpha_1\in\Omega^{\wt{\alpha_1}+\wt{\alpha_2}}_RA
\]
(see~\cite[\S 2.8]{CBEG07} for an alternative notation), and given a linear map $F\colon \Omega^\bullet_R A\to(\Omega^\bullet_R A)^{\otimes 2}$, 
\[
{}^\circ F\colon \dR^\bullet_R A\longrightarrow\Omega^\bullet_R A,\quad \alpha\longmapsto {}^{\circ}\big(F(\alpha)\big).
\]
Applying this operation to the contraction operator $i_X\in\Der^{-1}_A(\Omega^\bullet_RA,(\Omega^\bullet_RA)^{\otimes 2}_\out)$ in \eqref{eq:contraction.2} and the Lie derivative $L_X\in\Der^0_R(\Omega^\bullet_RA,(\Omega^\bullet_RA)^{\otimes 2}_\out)$ in \eqref{eq:Lie-derivative.1}, with $\ell=2$ and $X\in\DDer_RA$, we define the \emph{reduced contraction} and the \emph{reduced Lie derivative} respectively by 
\begin{subequations}
\begin{align}\label{eq:def-contraction-reduced}
\iota_X\defeq {}^\circ\big(i_X\big) \colon& \dR_R^\bullet A\longrightarrow \Omega^{\bullet-1}_R A.
\\\label{eq:def-Lie-deriv-reducida}
\mathcal{L}_X\defeq {}^\circ\big(L_X\big)\colon& \dR_R^\bullet A\longrightarrow \Omega^\bullet_RA.
\end{align}\end{subequations}
By~\cite[Lemma 2.8.6(ii)]{CBEG07}, if $a,a'\in A$, $X\in\D_R A$, $\omega\in\dR_R^\bullet \Omega$, then $\iota_{a\cdot X\cdot a'}\omega=a(\iota_X\omega)a'$, and, by~\eqref{eq:contraction.4.a}, $\iota_Xa=0$.

The following reduced versions of the Cartan identities for double derivations will be used in the proof of Theorem~\ref{thm:twist-CD-algebra-double}.
As in Theorem~\ref{thm:Cartan-identities.1}, the identities~\eqref{eq:reduced-homotopic-Cartan}--\eqref{eq:identidades-cruciales-reducida.b} already appeared in~\cite{CBEG07,VdB08}, but~\eqref{eq:prop:reduced-Cartan-identities.1.e} is new (cf. Remark~\ref{rem:Cartan-identities.1}).
As in~\eqref{eq:thm:Cartan-identities.1.a} and~\eqref{eq:thm:Cartan-identities.1.b}, the identities~\eqref{eq:reduced-homotopic-Cartan} and~\eqref{eq:prop:reduced-Cartan-identities.1.b} actually hold even if $A$ is not finitely generated over $R$. 

\begin{proposition}\label{prop:reduced-Cartan-identities.1}
Suppose $A$ is finitely generated over $R$. Then for all $X,Y\in\D_R A$, 
\begin{subequations}\label{eq:prop:reduced-Cartan-identities.1}
\begin{align}\label{eq:reduced-homotopic-Cartan}
\mathcal{L}_X&=\du{}\circ\iota_X+\iota_X\circ\du{}_{\dR},
\\\label{eq:prop:reduced-Cartan-identities.1.b}
\du{}\circ\mathcal{L}_X&=\mathcal{L}_X\circ\du{}_{\dR},
\\\label{eq:identidades-cruciales-reducida.a}
i_X\iota_Y+\sigma_{(12)}i_Y\iota_X&=0,
\\\label{eq:identidades-cruciales-reducida.b}
i_X\mathcal{L}_Y-\sigma_{(12)}L_Y\iota_X&=\iota_{\lr{X,Y}'_l}\otimes\lr{X,Y}''_l+\lr{X,Y}'_r\otimes \iota_{\lr{X,Y}''_r},
\\\label{eq:prop:reduced-Cartan-identities.1.e}
\begin{split}
L_X\mathcal{L}_Y-\sigma_{(12)} L_Y\mathcal{L}_X&=\mathcal{L}_{\lr{X,Y}'_l}\otimes\lr{X,Y}''_l+\lr{X,Y}'_r\otimes \mathcal{L}_{\lr{X,Y}''_r}
\\
&\qquad -\iota_{\lr{X,Y}'_l}\otimes\du{\lr{X,Y}''_l}+\du{\lr{X,Y}'_r}\otimes  \iota_{\lr{X,Y}''_r}.
\end{split}\end{align}\end{subequations}
\end{proposition}

\begin{proof}
The reduced Cartan homotopic formula~\eqref{eq:reduced-homotopic-Cartan} and the identity~\eqref{eq:prop:reduced-Cartan-identities.1.b} follow from the corresponding non-reduced identities \eqref{eq:thm:Cartan-identities.1.b} and~\eqref{eq:thm:Cartan-identities.1.b}, respectively (see \cite[Lemma 2.8.8(i)]{CBEG07}).
To prove the remaining identities, we will use Van den Bergh's formula \cite[\S A.1]{VdB08}
\begin{equation}\label{eq:formula-fundamental-VdB-reducida}
\xi\circ {}^\circ \eta-(-1)^{\wt{\xi}\wt{\eta}}\sigma_{(12)}\circ \eta\circ{}^\circ \xi ={}^{\circ, l}\!\lr{\xi,\eta}_l+ {}^{\circ, r}\!\lr{\xi,\eta}_r,
\end{equation}
valid for all $\xi,\eta\in\D^\bullet_R(\Omega^\bullet_RA)$ (cf. Lemma~\ref{lem:graded-commutator.1}), where $\lr{-,-}_l$ and $\lr{-,-}_r$ are the components~\eqref{eq:graded-Schouten-Nijenhuis.1.6} of the (graded) double Schouten--Nijenhuis bracket, 
and
\[
  {}^{\circ, l}(\epsilon'\otimes\epsilon'')\defeq {}^\circ \epsilon'\otimes \epsilon'',\qquad
  {}^{\circ, r}(\tilde{\epsilon}'\otimes\tilde{\epsilon}'')\defeq \tilde{\epsilon}'\otimes {}^\circ \tilde{\epsilon}''.
\]
Now, the identities~\eqref{eq:identidades-cruciales-reducida.a} and~\eqref{eq:identidades-cruciales-reducida.b} follow applying~\eqref{eq:formula-fundamental-VdB-reducida} to \eqref{eq:thm:Cartan-identities.1.d} and \eqref{eq:thm:Cartan-identities.1.e}, respectively (see~\cite[eqs. (A.5) and (A.6)]{VdB08}).
Finally,~\eqref{eq:prop:reduced-Cartan-identities.1.e} follows from~\eqref{eq:formula-fundamental-VdB-reducida} applied to $\xi=L_X$, $\eta=L_Y$, where the right-hand side is calculated using~\eqref{eq:rem:Cartan-identities.1.2} and the  identities
\begin{align*}
{}^{\circ,l}\big(i_{\lr{X,Y}'_l}\otimes\du{\lr{X,Y}''_l}\big)&=\iota_{\lr{X,Y}'_l}\otimes\du{\lr{X,Y}''_l},
\\
{}^{\circ, r}\big(\du{\lr{X,Y}'_r}\otimes i_{\lr{X,Y}''_r}\big)&=\du{\lr{X,Y}'_r\otimes\iota_{\lr{X,Y}''_r}}.
\qedhere
\end{align*}
\end{proof}


\section{Double Poisson vertex algebras}
\label{sec:DPVA}

In \cite{DSKV15}, De Sole, Kac and Valeri developed a theory of integrable systems on noncommutative associative algebras and, remarkably, they introduced double Poisson vertex algebras, which we present in this section. 

A \emph{differential algebra} \cite[\S 3.1]{DSKV15} $(\mathcal{V},\partial)$ is a pair consisting of an associative algebra $\mathcal{V}$, and a derivation $\partial\in\Der\mathcal{V}$. 
Given a polynomial $P(\lambda)=\sum a_1\otimes\dots \otimes a_n\lambda^{k_1}_1\cdots\lambda^{k_{n-1}}_{n-1}\in \mathcal{V}^{\otimes n}[\lambda_1,\dots,\lambda_{n-1}]$ and $f\in\mathcal{V}$, we define
\begin{align*}
(e^{\partial\partial_{\lambda_i}}f)&*_iP(\lambda_1,\dots,\lambda_i,\dots\lambda_{n-1})
\\
&=\sum\sum^{k_i}_{j=0}
\binom{k_i}{j}
a_1\otimes\cdots\otimes a_i\otimes (\partial^j f)a_{i+1}\otimes \cdots\otimes a_n\,\lambda^{k_1}_1\cdots\lambda^{k_i-j}_i\cdots \lambda^{k_{n-1}}_{n-1}.
\end{align*}
That is, $e^{\partial\partial_{\lambda_i}}$ replaces $\lambda_i$ by $\lambda_i+\partial$, and the parentheses point out that $\partial$ is applied to $f$. Moreover, we denote
\begin{align*}
P(\lambda_1,\dots,\lambda_i&+\partial,\dots,\lambda_{n-1})_{\to}*_{n-i}f
\\
&=\sum\sum^{k_i}_{j=0}\binom{k_i}{j}a_1\otimes \cdots\otimes a_i(\partial^jf)\otimes \cdots\otimes a_n\,\lambda^{k_1}_1\cdots\lambda^{k_i-j}_i\cdots\lambda^{k_{n-1}}_{n-1}.
\end{align*}
In other words, the arrow means that $\partial$ is applied to $f$.

\begin{definition}[{\cite[p. 1058]{DSKV15}}]
A \emph{double $\lambda$-bracket} on a differential algebra $(\mathcal{V},\partial)$ is a linear map
\begin{equation}
\lr{-{}_\lambda-}\colon\mathcal{V}\otimes\mathcal{V}\lto (\mathcal{V}\otimes \mathcal{V})[\lambda]
\label{eq:doublelambda-bracket-def}
\end{equation} 
satisfying, for all $a,b,c\in\mathcal{V}$, the sesquilinearity conditions:
\begin{subequations}\label{eq:sesquilinearity-vertex}
\begin{align}
\label{eq:sesquilinearity-vertex.a}
\lr{\partial a{}_\lambda b}&=-\lambda\lr{a{}_\lambda b},
\\
\label{eq:sesquilinearity-vertex.b}
\lr{a{}_\lambda \partial b}&=(\lambda+\partial)\lr{a{}_\lambda b},
\end{align}
\end{subequations}
and the Leibniz rules:
\begin{subequations}\label{eq:Leibniz-vertex}
\begin{align}
\label{eq:Leibniz-vertex.a}
\lr{a{}_\lambda bc}&=b\lr{a{}_\lambda c}+\lr{a{}_\lambda b}c,
\\
\label{eq:Leibniz-vertex.b}
\lr{ab{}_\lambda c}&=\lr{a{}_{\lambda+\partial}c}_{\to}*_1 b+(e^{\partial \partial_\lambda}a)*_1\lr{b{}_\lambda c}.
\end{align}
\end{subequations}
\label{def:double-Poisson-vertex-alg}
\end{definition}

Recall that in \eqref{notation-Kac} we introduced suitable extensions
of the double bracket $\lr{-,-}$ to define the double Jacobi identity \eqref{Jacobi-Kac}. 
Similarly, to define a suitable Jacobi identity for double $\lambda$-brackets, we need to extend \eqref{eq:doublelambda-bracket-def} as follows (see \cite[eq. (4.3)]{DSKV15}). 
For all $a,b,c\in\mathcal{V}$,
\begin{equation}
\begin{aligned}
\lr{a{}_\lambda(b\otimes c)}_L&\defeq\lr{a{}_\lambda b}\otimes c,
\\
\lr{a{}_\lambda(b\otimes c)}_R&\defeq b\otimes\lr{a{}_\lambda c},
\\
\lr{(a\otimes b){}_\lambda c}_L&\defeq\lr{a{}_{\lambda+\partial}c}_\to\otimes_1 b,
\end{aligned}
\label{eq:extension-vertex-double-bracket}
\end{equation}
where the notation $\otimes_1$ was introduced in \eqref{jumping-notation}.

\begin{definition}[{\cite[Definition 3.2]{DSKV15}}]\label{DPVA}
A \emph{double Poisson vertex algebra} $(\mathcal{V},\partial,\lr{-{}_\lambda-})$ is a differential algebra $(\mathcal{V},\partial)$ endowed with a double $\lambda$-bracket  $\lr{-{}_\lambda-}\colon \mathcal{V}\otimes\mathcal{V}\to(\mathcal{V}\otimes\mathcal{V})[\lambda]$, satisfying the following axioms:
 \begin{enumerate}
 \item [\textup{(i)}]
Skewsymmetry:
\begin{equation}
\lr{a{}_\lambda b}=-\lr{b{}_{-\lambda-\partial}a}^\sigma,
\label{vertex-skew}
\end{equation}
where $-\lambda-\partial$ in the right-hand side is moved to the left, acting on the coefficients.
\item [\textup{(ii)}]
Jacobi identity:
\begin{equation}
\lr{a{}_\lambda\lr{b{}_\mu c}}_L=\lr{b{}_\mu\lr{a{}_\lambda c}}_R+\lr{\lr{a_{\lambda} b}{}_{\lambda+\mu}c}_L.
\label{Jacobi-vertex}
\end{equation}
\end{enumerate}
\end{definition}

It is easy to check that if $\partial=0$, and $\lambda=\mu=0$, the axioms of double Poisson vertex algebras become the axioms of double Poisson algebras. So, as emphasized in \cite{DSKV15}, a double Poisson vertex algebra is a cross between a double Poisson algebra and a Poisson vertex algebra. Furthermore, it is straightforward to check that given a double Poisson vertex algebra, the skewsymmetry  \eqref{vertex-skew} and the left Leibniz rule \eqref{eq:Leibniz-vertex.a} imply the right Leibniz rule \eqref{eq:Leibniz-vertex.b} (we thank Daniele Valeri for pointing this out). Therefore, \eqref{eq:Leibniz-vertex.b} is redundant in Definition \ref{DPVA} and so we will omit it below.

All double Poisson vertex algebras appearing in this paper will have the following additional structure:

\begin{definition}\label{graded-DPVA}
A \emph{graded double Poisson vertex algebra of degree $-1$} is a double Poisson vertex algebra $(\mathcal{V},\partial,\lr{-{}_\lambda-})$ equipped with an $\NN$-grading $\mathcal{V}=\bigoplus_{n\geq 0}\mathcal{V}_n$, called \emph{weight}, such that $\mathcal{V}$ is graded as an algebra, 
the derivation $\partial$ has weight $1$ and the double $\lambda$-bracket $\lr{-{}_\lambda-}\colon\mathcal{V}\otimes\mathcal{V}\to (\mathcal{V}\otimes \mathcal{V})[\lambda]$ has weight $-1$, where we assign weight 1 to the formal variable $\lambda$.
\end{definition}

Following \cite[Proposition 3.20]{DSKV15}, given $a,b\in\mathcal{V}$, we can write the double $\lambda$-bracket as 
\begin{equation}\label{eq:notation-vector-bracket-sumatorio}
\lr{a{}_{\lambda}b}=\sum_{\mathit{n}\in\mathbb{N}}\big((a_{\mathit{n}}b)^{\prime}\otimes (a_{\mathit{n}}b)^{\pprime}\big)\lambda^{\mathit{n}}.
\end{equation}
Then the condition in Definition~\ref{graded-DPVA} that $\lr{-{}_\lambda-}$ has weight $-1$ can be formulated, using Sweedler's notation~\eqref{eq:notation-vector-bracket-sumatorio}, as the condition that for all $i,j,\mathit{n}\in\NN$, $a\in\mathcal{V}_i$, $b\in\mathcal{V}_j$,
\[
(a_{\mathit{n}}b)^{\prime}\otimes (a_{\mathit{n}}b)^{\pprime}\in\(\mathcal{V}\otimes\mathcal{V}\)_{i+j-\mathit{n}-1}.
\]


\section{Double Courant--Dorfman algebras}
\label{sec:CD-algebras}

\allowdisplaybreaks
Roytenberg \cite{Roy09} introduced an algebraic analogue of Courant algebroids called \emph{Courant--Dorfman algebras}; their relationship is analogous to that of Lie--Rinehart algebras to Lie algebroids.
In this section, based on \cite{ACF17} (see also \cite{Fer16}), we define a noncommutative version of Courant--Dorfman algebras called \emph{double Courant--Dorfman algebras} by adapting Roytenberg's definition to the noncommutative setting reviewed in \S\ref{sub:basics}.
We start by recalling Roytenberg's definition of Courant--Dorfman algebras~\cite[Definitions 2.1 and 2.3]{Roy09}.

\begin{definition}\label{def:CD-algebra-def-comm}
A \emph{Courant--Dorfman algebra} consists of the following data:
\begin{enumerate}
\item[(1)]
a commutative $\kk$-algebra $\cO$,
\item[(2)]
an $\cO$-module $\cE$,
\item[(3)]
a symmetric bilinear form $\langle-,-\rangle\colon\cE\otimes_\cO\cE\to\cO$,
\item[(4)]
a derivation $\partial\colon\cO\to\cE$,
\item [(5)]
a linear map $[-,-]\colon\cE\otimes\cE\to\cE$, called the \emph{Dorfman bracket}.
\end{enumerate}
These data are required to satisfy the following conditions, for all $\comma,\commb\in \cO$, $s,\comme,\commf,\commg\in \cE$:
\begin{subequations}
\label{eq:CD-comm}
\begin{align}\label{eq:CD-comm.a}
[\comme,\comma\commf]&=\comma[\comme,\commf]+\langle\comme,\partial\comma\rangle \commf, 
\\\label{eq:CD-comm.b}
\langle \comme,\partial\langle\commf,\commg\rangle\rangle&= \langle [\comme,\commf],\commg\rangle+\langle\commf,[\comme,\commg]\rangle,
\\\label{eq:CD-comm.c}
[\comme,\commf]+[\commf,\comme]&=\partial\langle \comme,\commf\rangle,
\\\label{eq:CD-comm.d}
[\comme,[\commf,\commg]]&=[[\comme,\commf],\commg]+[\commf,[\comme,\commg]],
\\\label{eq:CD-comm.e}
[\partial\comma,s]&=0,
\\\label{eq:CD-comm.f}
\langle\partial\comma,\partial\commb\rangle&=0.
\end{align}
\end{subequations}
A \emph{Courant algebroid} is a non-degenerate Courant--Dorfman algebra, that is, a Courant--Dorfman algebra such that the following map is an isomorphism (where $\cE^*\defeq\Hom_{\cO}(\cE,\cO)$):
\[
\cE\lto\cE^*,\quad\comme\longmapsto\langle\comme,\?\rangle.
\]
\end{definition}

It is easy to check that~\eqref{eq:CD-comm.a} and~\eqref{eq:CD-comm.c} imply
\begin{equation}\label{eq:CD-comm.g}
[\comma\comme,\commf]=\comma[\comme,\commf]+\langle\comme,\commf\rangle\partial \comma-\langle\partial\comma,\commf\rangle\comme.
\end{equation}

As Roytenberg pointed out, a vector space $\cE$ equipped with a bracket $[-,-]$ satisfying \eqref{eq:CD-comm.d} is called a ($\kk$-)Leibniz algebra. Moreover, if the pairing $\langle-,-\rangle$ is nondegenerate, the action of $\cE$ on $\cO$ completely determines the derivation $\partial$, so \eqref{eq:CD-comm.e} and \eqref{eq:CD-comm.f} are redundant, 
and one recovers the definition of Courant algebroids of \cite{Roy00} (see \cite{Roy09} and references therein).

Our noncommutative version of Courant--Dorfman algebra in Definition~\ref{CD-Definition}
is obtained replacing the data in (1)--(5) of Definition~\ref{eq:CD-comm} by an associative algebra $A$, an $A$-bimodule $E$ and appropriate linear maps
\[
\ii{-,-}\colon E\otimes E\lto A\otimes A, \quad\partial\colon E\lto A,\quad \cc{-,-}\colon E\otimes E\lto E\otimes A\oplus A\otimes E.
\]
Considering the tensor algebra $T_A E$ of $E$ over $A$, with $E$ placed in degree 1, 
we impose the Leibniz rule~\eqref{eq:double-bracket.b} to extend the operator $\cc{-,-}$ to a bilinear map of degree $-1$, 
\begin{equation}\label{eq:cc-tensoralg.1}
\cc{-,-}\colon T_AE\otimes T_AE\lto T_AE\otimes T_AE, 
\end{equation}
adding the following conditions for all $e\in E$ and $a,b\in A$ (see~\cite[\S 7.1]{ACF17}):
\begin{equation}\label{eq:cc-tensoralg.2}
\cc{e,a}=\ii{e,\partial a},\quad \cc{a,e}=-\ii{\partial a,e},\quad \cc{a,b}=0.
\end{equation}
Using the identities~\eqref{eq:cc-tensoralg.2}, it seems natural to extend $\cc{e,-},\cc{-,f}\colon E\to E\otimes A\oplus A\otimes E$ (for $a\in A$, $e,f\in E$) to linear maps (cf.~\eqref{extension-left}, \eqref{notation-Kac}, \eqref{eq:extension-vertex-double-bracket})
\begin{equation}
\label{eq:extension-CD-bracket}
\hspace*{-1.3ex}
\begin{aligned}&
\cc{e,-}_L\colon E\otimes A\lto E\otimes A\otimes A\oplus A\otimes E\otimes A,&&  f\otimes a\longmapsto \cc{e,f}\otimes a,
\\&
\cc{e,-}_L\colon A\otimes E\lto A\otimes A\otimes E,&& a\otimes f\longmapsto \ii{e,\partial a}\otimes f,
\\&
\cc{e,-}_R\colon E\otimes A\lto E\otimes A\otimes A,&& f\otimes a\longmapsto f\otimes\ii{e,\partial a},
\\&
\cc{e,-}_R\colon A\otimes E\lto A\otimes E\otimes A\oplus A\otimes A\otimes E,&& a\otimes f\longmapsto a\otimes \cc{e,f},
\\&
\cc{-,f}_L\colon E\otimes A\lto E\otimes A\otimes A\, + c.p.,&&  e\otimes a\longmapsto \cc{e,f}\otimes_1a+\ii{e,f}\otimes_1\partial a,
\\&
\cc{-,f}_L\colon A\otimes E\lto A\otimes E\otimes A,&& a\otimes e\longmapsto -\ii{\partial a,f}\otimes_1 e,
\\&
\cc{-,f}_R\colon E\otimes A\lto A\otimes E\otimes A,&&  e\otimes a\longmapsto -e\otimes_1\ii{\partial a,f},
\\&
\cc{-,f}_R\colon A\otimes E\lto E\otimes A\otimes A\, + c.p.,&& a\otimes e\longmapsto a\otimes_1\cc{e,f}+\partial a\otimes_1\ii{e,f},
\end{aligned}
\end{equation}
where $E\otimes A\otimes A\,+ c.p.$ is a shorthand for $E\otimes A\otimes A\oplus A\otimes E\otimes A\oplus A\otimes A\otimes E$, and the notation $\otimes_1$ was introduced in \eqref{jumping-notation}.
Note that some of the extensions~\eqref{eq:extension-CD-bracket} depend on $\ii{-,-}$ and $\partial$.

\begin{definition}\label{CD-Definition}
A \emph{double Courant--Dorfman algebra}
 is a 5-tuple $(A,E,\ii{-,-},\partial,\cc{-,-})$ consisting of the following data:
\begin{enumerate}
\item[(1)]
an algebra $A$,
\item[(2)]
an $A$-bimodule $E$,
\item[(3)]
a symmetric pairing $\ii{-,-}\,\colon E\otimes E\lto A\otimes A$,
\item[(4)]
a derivation $\partial\,\colon A\lto E$,
\item[(5)]
  a linear map $\cc{-,-}\colon E\otimes E\lto E\otimes A\oplus A\otimes E$, called the \emph{double Dorfman bracket}.
\end{enumerate}
These data must satisfy the following axioms, for all $a,b\in A$ and $e,f,g\in E$:
\begin{subequations}
\label{eq:CD}
\begin{align}
\cc{e,f}+\cc{f,e}^\sigma&=\partial\ii{e,f}, \label{eq:CD.a}
\\
\cc{\partial a, e}&=0,  \label{eq:CD.b}
\\
\ii{\partial a,\partial b}&=0,   \label{eq:CD.c}
\\
\cc{e,fa}&=\cc{e,f}a+f\ii{e,\partial a},  \label{eq:CD.d}
\\
\cc{e,af}&=a\cc{e,f}+\ii{e,\partial a}f,  \label{eq:CD.e}
\\
\cc{e,\cc{f,g}}_L&=\cc{\cc{e,f},g}_L+\cc{f,\cc{e,g}}_R,   \label{eq:CD.f}
\\
\ii{e,\partial\ii{f,g}}_L&=\ii{\cc{e,f},g}_L+\ii{f,\cc{e,g}}_R.    \label{eq:CD.g}
\end{align}
\end{subequations}
When $A$ is an $R$-algebra, a Courant--Dorfman algebra is called \emph{$R$-linear} if so are the derivation $\partial$ and the double Dorfman bracket $\cc{-,-}$.
Finally, a double Courant--Dorfman algebra is called \emph{non-degenerate} if so is the pairing. Such an algebra will also be called a \emph{double Courant algebroid}.
\end{definition}

In \eqref{eq:CD.a} and \eqref{eq:CD.g}, the derivation $\partial$ is extended to $A\otimes A$ by the Leibniz rule, so
\begin{equation}
\partial\ii{e_1,e_2}=\partial\ii{e_1,e_2}^\prime\otimes\ii{e_1,e_2}^{\prime\prime}+\ii{e_1,e_2}^\prime\otimes\partial \ii{e_1,e_2}^{\prime\prime},
\label{eq:partialLeibniz-rule}
\end{equation}
for all $e_1,e_2\in E$.
In \eqref{eq:CD.g} we use the extensions of $\ii{-,-}$ given by \eqref{eq:extension-pairing-first-second}, so this identity is in $A^{\otimes 3}$. Moreover, in \eqref{eq:CD.d} and \eqref{eq:CD.e} the products use the outer bimodule structure.

In future calculations, we will use the following decomposition of the double Dorfman bracket $\cc{-,-}$. 
Given $e,f\in E$, 
\begin{equation}
\label{eq:decomposition-Courant-Dorfman-def}
\cc{e,f}=\cc{e,f}_l+\cc{e,f}_r\in E\otimes A\oplus A\otimes E,
\end{equation}
that is, $\cc{e,f}_l\in E\otimes A$ and $\cc{e,f}_r\in A\otimes E$. 
Using Sweedler's notation, we can write
\begin{equation}
\label{eq:decomposition-Courant-Dorfman-def-1}
\cc{e,f}_l=\cc{e,f}^{\prime}_l\otimes \cc{e,f}^{\pprime}_l,\quad \cc{e,f}_r= \cc{e,f}^{\prime}_r\otimes \cc{e,f}^{\pprime}_r,
\end{equation}
with
$\cc{e,f}^{\prime}_l,\cc{e,f}^{\pprime}_r\in E$ and $\cc{e,f}^{\prime}_r,\cc{e,f}^{\pprime}_l\in A$.

The following formulae will be used frequently throughout this paper.

\begin{lemma}\label{lem:CD-identities}
Let $(A,E,\ii{-,-},\partial,\cc{-,-})$ be a double Courant--Dorfman algebra. Then the following identities hold for all $a\in A$ and $e,f,g\in E$:
\begin{subequations}\label{eq:CD-identities.1}
\begin{align}\label{eq:CD-identities.1.a}
\partial\ii{e,\partial a}&=\cc{e,\partial a},
\\\label{eq:CD-identities.1.b}
\partial\ii{\partial a,e}&=\cc{e,\partial a}^\sigma,
\\\label{eq:CD-identities.1.c}
\cc{ae,f}&=a*\cc{e,f}+\partial a*\ii{e,f}-\ii{\partial a,f}*e,
\\\label{eq:CD-identities.1.d}
\cc{ea,f}&=\cc{e,f}*a+\ii{e,f}*\partial a-e*\ii{\partial a,f},
\\\label{eq:CD-identities.1.e}
\cc{e,\cc{f,g}}_R&=\cc{\cc{e,f},g}_R+\cc{f,\cc{e,g}}_L,
\\\label{eq:CD-identities.1.f}
\ii{f,\partial\ii{e,g}}_R&=\ii{\cc{f,e},g}_R+\ii{e,\cc{f,g}}_L,
\\\label{eq:CD-identities.1.g}
\ii{f,\partial\ii{e,g}}_R&=\ii{\partial\ii{e,f},g}_L-\ii{\cc{e,f},g}_L+\ii{e,\cc{f,g}}_L.
\end{align}\end{subequations}
\end{lemma}

\begin{proof}
The identities~\eqref{eq:CD-identities.1.a} and~\eqref{eq:CD-identities.1.b} follow from~\eqref{eq:CD.a} and~\eqref{eq:CD.b}:
\[
\partial\ii{e,\partial a}=\cc{e,\partial a}+\cc{\partial a,e}^\sigma=\cc{e,\partial a},
\quad
\partial\ii{\partial a,e}=\cc{\partial a,e}+\cc{e,\partial a}^\sigma=\cc{e,\partial a}^\sigma.
\]

To prove~\eqref{eq:CD-identities.1.c}, we apply~\eqref{eq:CD.a}, \eqref{eq:CD.e}, the identity $\ii{ae,f}=a*\ii{e,f}$ 
and~\eqref{eq:pairing-symm}:
\begin{align*}
\cc{ae,f}&=\partial\ii{ae,f}-\cc{f,ae}^\sigma
=\partial(a*\ii{e,f})-\(a\cc{f,e}+\ii{f,\partial a}e\)^\sigma
\\&
=a*\partial\ii{e,f}+\partial a*\ii{e,f}-\(a*\cc{f,e}^\sigma+\ii{f,\partial a}^\sigma*e\)
\\&
=a*\cc{e,f}+\partial a*\ii{e,f}-\ii{\partial a,f}*e.
\end{align*}
The identity~\eqref{eq:CD-identities.1.d} follows by a similar calculation, using~\eqref{eq:CD.d} rather than~\eqref{eq:CD.e}, and $\ii{ea,f}=\ii{e,f}*a$  rather than $\ii{ae,f}=a*\ii{e,f}$.

Next, we prove~\eqref{eq:CD-identities.1.e}. By \eqref{eq:extension-CD-bracket}, \eqref{eq:CD.a}, \eqref{eq:CD.f}, \eqref{eq:decomposition-Courant-Dorfman-def} and \eqref{eq:decomposition-Courant-Dorfman-def-1}, the left-hand side is
\begin{align}\notag
\cc{e,\cc{f,g}}_R&=\cc{f,g}^{\prime}_l\otimes\ii{e,\partial\cc{f,g}^{\pprime}_l}+\cc{f,g}^{\prime}_r\otimes\cc{e,\cc{f,g}^{\pprime}_r}
\\\notag&
=\sigma_{(123)}\big(\ii{e,\partial\cc{f,g}^{\pprime}_l}\otimes\cc{f,g}^{\prime}_l+\cc{e,\cc{f,g}^{\pprime}_r}\otimes\cc{f,g}^{\prime}_r\big)
\\\notag&
=\sigma_{(123)}\big(\cc{e,\cc{f,g}^\sigma}_L\big)
=\sigma_{(123)}\big(\cc{e,\partial\ii{g,f}}_L-\cc{e,\cc{g,f}}_L\big)
\\\label{eq:CD-identities.3}&
=\sigma_{(123)}\big(\cc{e,\partial\ii{g,f}}_L-\cc{\cc{e,g},f}_L-\cc{g,\cc{e,f}}_R\big)
\end{align}
(the third identity follows by applying $\sigma=\sigma_{(12)}$ to~\eqref{eq:decomposition-Courant-Dorfman-def-1} and then expanding the double bracket using~\eqref{eq:extension-CD-bracket}). 
By \eqref{eq:extension-pairing-first-second}, \eqref{eq:ext-pairing-zero}, \eqref{eq:extension-CD-bracket}, \eqref{eq:CD.g}, \eqref{eq:partialLeibniz-rule}, \eqref{eq:CD-identities.1.a}, in the first term of \eqref{eq:CD-identities.3}, we have
\begin{align}\notag
\cc{e,\partial\ii{g,f}}_L&=\cc{e,\partial\ii{g,f}^{\prime}\otimes\ii{g,f}^{\pprime}
+\ii{g,f}^{\prime}\otimes\partial\ii{g,f}^{\pprime}}_L
\\\notag&
=\cc{e,\partial\ii{g,f}^{\prime}}\otimes\ii{g,f}^{\pprime}+\ii{e,\partial\ii{g,f}^{\prime}}\otimes\partial\ii{g,f}^{\pprime}
\\\notag&
=\partial\ii{e,\partial\ii{g,f}^{\prime}}\otimes\ii{g,f}^{\pprime}+\ii{e,\partial\ii{g,f}^{\prime}}\otimes\partial\ii{g,f}^{\pprime}
\\\notag&
=\partial\big(\ii{e,\partial\ii{g,f}^{\prime}}\otimes\ii{g,f}^{\pprime}\big)
=\partial\big(\ii{e,\partial\ii{g,f}}_L\big)
\\\label{eq:CD-identities.4}&
=\partial\(\ii{\cc{e,g},f}_L\)+\partial\(\ii{g,\cc{e,f}}_R\),
\end{align}
where $\partial$ is extended to an operator on $A^{\otimes 3}$ by the Leibniz rule (cf.~\eqref{eq:partialLeibniz-rule}).
By \eqref{eq:pairing-symm}, \eqref{eq:extension-pairing-first-second}, \eqref{eq:ext-pairing-zero}, \eqref{eq:extension-CD-bracket} and \eqref{eq:CD.a}, the two terms in the right-hand side of \eqref{eq:CD-identities.4} are 
\begin{subequations}\label{eq:CD-identities.5}\begin{align}\notag
\partial\!\(\ii{\cc{e,g},f}_L\)&=\partial\!\(\!\ii{\cc{e,g}^{\prime}_l,f}\!\otimes_1\!\cc{e,g}^{\pprime}_l\)
=\partial\ii{\cc{e,g}^{\prime}_l,f}\!\otimes_1\!\cc{e,g}^{\pprime}_l+\ii{\cc{e,g}^{\prime}_l,f}\otimes_1\partial\cc{e,g}^{\pprime}_l
\\\notag&\hspace*{-3.1ex}
=\(\cc{\cc{e,g}^{\prime}_l,f}+\cc{f,\cc{e,g}^{\prime}_l}^\sigma\)\otimes_1\cc{e,g}^{\pprime}_l+\ii{\cc{e,g}^{\prime}_l,f}\otimes_1\partial\cc{e,g}^{\pprime}_l
\\\notag&\hspace*{-3.1ex}
=\(\cc{\cc{e,g}^{\prime}_l,f}\otimes_1\cc{e,g}^{\pprime}_l+\ii{\cc{e,g}^{\prime}_l,f}\otimes_1\partial\cc{e,g}^{\pprime}_l\)+\sigma_{(132)}\(\cc{f,\cc{e,g}^{\prime}_l}\otimes\cc{e,g}^{\pprime}_l\)
\\\label{eq:CD-identities.5.a}
&\hspace*{-3.1ex}
=\cc{\cc{e,g}_l,f}_L+\sigma_{(132)}\(\cc{f,\cc{e,g}_l}_L\),
\\\notag
\partial\!\(\ii{g,\cc{e,f}}_R\)&=\partial\!\(\cc{e,f}^{\prime}_r\!\otimes\!\ii{g,\cc{e,f}^{\pprime}_r}\)=\partial\cc{e,f}^{\prime}_r\!\otimes\!\ii{g,\cc{e,f}^{\pprime}_r}+\cc{e,f}^{\prime}_r\!\otimes\!\partial\ii{g,\cc{e,f}^{\pprime}_r}
\\\notag&\hspace*{-3.1ex}
=\partial\cc{e,f}^{\prime}_r\!\otimes\!\ii{g,\cc{e,f}^{\pprime}_r}+\cc{e,f}^{\prime}_r\!\otimes\(\cc{g,\cc{e,f}^{\pprime}_r}+\cc{\cc{e,f}^{\pprime}_r,g}^\sigma\)
\\\notag&\hspace*{-3.1ex}
=\cc{e,f}^{\prime}_r\otimes\cc{g,\cc{e,f}^{\pprime}_r}+\sigma_{(132)}\(\cc{e,f}^{\prime}_r\otimes_1\cc{\cc{e,f}^{\pprime}_r,g}+\partial\cc{e,f}^{\prime}_r\otimes_1\ii{\cc{e,f}^{\pprime}_r,g}\)
\\\label{eq:CD-identities.5.b}
&\hspace*{-3.1ex}
=\cc{g,\cc{e,f}_r}_R+\sigma_{(132)}\(\cc{\cc{e,f}_r,g}_R\).
\end{align}\end{subequations}
Combining \eqref{eq:decomposition-Courant-Dorfman-def}, \eqref{eq:CD-identities.3}, \eqref{eq:CD-identities.4}, \eqref{eq:CD-identities.5}, and using the fact that $\sigma_{(132)}=\sigma_{(123)}^{-1}$, we obtain
\begin{align}\label{eq:CD-identities.6}
\cc{e,\cc{f,g}}_R&
=\cc{\cc{e,f}_r,g}_R+\cc{f,\cc{e,g}_l}_L
-\sigma_{(123)}\(\cc{g,\cc{e,f}_l}_R+\cc{\cc{e,g}_r,f}_L\).
\end{align}
By~\eqref{eq:pairing-symm} and~\eqref{eq:extension-CD-bracket}, the last two terms in the right-hand side of~\eqref{eq:CD-identities.6} are
\begin{subequations}\label{eq:CD-identities.7}\begin{align}\notag
-\sigma_{(123)}\(\cc{g,\cc{e,f}_l}_R\)&
=-\sigma_{(123)}\big(\cc{e,f}^{\prime}_l\otimes\ii{g,\partial\cc{e,f}^{\pprime}_l}\big)
=-\cc{e,f}^{\prime}_l\otimes_1\ii{g,\partial\cc{e,f}^{\pprime}_l}^\sigma
\\\label{eq:CD-identities.7.a}&
=-\cc{e,f}^{\prime}_l\otimes_1\ii{\partial\cc{e,f}^{\pprime}_l,g}
=\cc{\cc{e,f}_l,g}_R,
\\\notag
-\sigma_{(123)}\(\cc{\cc{e,g}_r,f}_L\)&
=-\sigma_{(123)}\big(-\ii{\partial\cc{e,g}^{\prime}_r,f}\otimes_1\cc{e,g}^{\pprime}_r\big)
=\ii{\partial\cc{e,g}^{\prime}_r,f}^\sigma\otimes\cc{e,g}^{\pprime}_r
\\\label{eq:CD-identities.7.b}&
=\ii{f,\partial\cc{e,g}^{\prime}_r}\otimes\cc{e,g}^{\pprime}_r
=\cc{f,\cc{e,g}_r}_L.
\end{align}\end{subequations}
The required identity~\eqref{eq:CD-identities.1.e} follows now from~\eqref{eq:decomposition-Courant-Dorfman-def}, \eqref{eq:CD-identities.6} and \eqref{eq:CD-identities.7}.

To prove~\eqref{eq:CD-identities.1.f} and~\eqref{eq:CD-identities.1.g}, we first show that 
\begin{subequations}\label{eq:CD-identities.2}
\begin{align}\label{eq:CD-identities.2.a}
\ii{\cc{f,e}^\sigma,g}_L&=\ii{\cc{f,e},g}_R,
\\\label{eq:CD-identities.2.b}
\ii{\partial\ii{f,g},e}_L&=\ii{g,\cc{e,f}^\sigma}_R+\ii{f,\cc{e,g}^\sigma}_L.
\end{align}\end{subequations}
The identity~\eqref{eq:CD-identities.2.a} follows using~\eqref{eq:extension-pairing-first-second}, \eqref{eq:ext-pairing-zero}, \eqref{eq:decomposition-Courant-Dorfman-def} and \eqref{eq:decomposition-Courant-Dorfman-def-1}:
\begin{align*}
\ii{\cc{f,e}^\sigma,g}_L&=\ii{\cc{f,e}_r^\sigma,g}_L=\ii{\cc{f,e}^{\pprime}_r,g}\otimes_1\cc{f,e}^{\prime}_r
\\&
=\cc{f,e}^{\prime}_r\otimes_1\ii{\cc{f,e}^{\pprime}_r,g}
=\ii{\cc{f,e}_r,g}_R=\ii{\cc{f,e},g}_R.
\end{align*}
To obtain~\eqref{eq:CD-identities.2.b}, we apply $\sigma_{(132)}$ to \eqref{eq:CD.g} and use \eqref{eq:Sweedler-pairing-symm}, \eqref{eq:extension-pairing-first-second}, \eqref{eq:ext-pairing-zero}, \eqref{eq:partialLeibniz-rule} and \eqref{eq:decomposition-Courant-Dorfman-def-1}:
\begin{align*}&
\sigma_{(132)}\ii{e,\partial\ii{f,g}}_L=\sigma_{(132)}\big(\ii{e,\partial\ii{f,g}^{\prime}}\otimes\ii{f,g}^{\pprime}\big)
\\&\;\;
=\big(\ii{e,\partial\ii{f,g}^{\prime}}^{\pprime}\otimes\ii{e,\partial\ii{f,g}^{\prime}}^{\prime}\big)\otimes_1\ii{f,g}^{\pprime}
=\ii{\partial\ii{f,g}^{\prime},e}\otimes_1\ii{f,g}^{\pprime}
=\ii{\partial\ii{f,g},e}_L,
\\&
\sigma_{(132)}\ii{\cc{e,f},g}_L=\sigma_{(132)}\big(\ii{\cc{e,f}_l^{\prime},g}\otimes_1\cc{e,f}_l^{\pprime}\big)
=\cc{e,f}_l^{\pprime}\otimes\ii{\cc{e,f}_l^{\prime},g}^{\pprime}\otimes\ii{\cc{e,f}_l^{\prime},g}^{\prime}
\\&\;\;
=\cc{e,f}_l^{\pprime}\otimes\ii{g,\cc{e,f}_l^{\prime}}
=\ii{g,\cc{e,f}_l^\sigma}_R=\ii{g,\cc{e,f}^\sigma}_R,
\\&
\sigma_{(132)}\ii{f,\cc{e,g}}_R=\sigma_{(132)}\big(\cc{e,g}_r^{\prime}\otimes\ii{f,\cc{e,g}_r^{\pprime}}\big)
=\ii{f,\cc{e,g}_r^{\pprime}}\otimes\cc{e,g}_r^{\prime}=\ii{f,\cc{e,g}^\sigma}_L.
\end{align*}
Hence~\eqref{eq:CD-identities.2.b} is equivalent to~\eqref{eq:CD.g} (also using \eqref{eq:Sweedler-pairing-symm}), because the map $\sigma_{(132)}$ is additive.

The identity~\eqref{eq:CD-identities.1.f} now follows from \eqref{eq:CD.a}, \eqref{eq:CD.g} and \eqref{eq:CD-identities.2}:
\begin{align*}&
\ii{f,\partial\ii{e,g}}_R
=\ii{f,\cc{e,g}}_R+\ii{f,\cc{g,e}^\sigma}_R 
=\ii{e,\partial\ii{f,g}}_L-\ii{\cc{e,f},g}_L+\ii{f,\cc{g,e}^\sigma}_R 
\\&\;\;
=\ii{e,\cc{f,g}}_L+\ii{e,\cc{g,f}^\sigma}_L-\ii{\cc{e,f},g}_L+\ii{f,\cc{g,e}^\sigma}_R 
\\&\;\;
=\ii{e,\cc{f,g}}_L+\ii{\partial\ii{e,f},g}_L-\ii{\cc{e,f},g}_L 
=\ii{e,\cc{f,g}}_L+\ii{\cc{f,e}^\sigma,g}_L  
\\&\;\;
=\ii{e,\cc{f,g}}_L+\ii{\cc{f,e},g}_R.
\end{align*}

Finally,~\eqref{eq:CD-identities.1.g} follows because~\eqref{eq:CD.a}, \eqref{eq:CD.g} and \eqref{eq:CD-identities.2.b} imply
\begin{align*}&
\ii{f,\partial\ii{e,g}}_R
=\ii{f,\cc{e,g}}_R+\ii{f,\cc{g,e}^\sigma}_R
=\ii{e,\partial\ii{f,g}}_L-\ii{\cc{e,f},g}_L+\ii{f,\cc{g,e}^\sigma}_R
\\&\;\;
=\ii{e,\cc{f,g}}_L-\ii{\cc{e,f},g}_L+\ii{e,\cc{g,f}^\sigma}_L+\ii{f,\cc{g,e}^\sigma}_R
\\&\;\;
=\ii{e,\cc{f,g}}_L-\ii{\cc{e,f},g}_L+\ii{\partial\ii{e,f},g}_L.
&\qedhere
\end{align*}
\end{proof}

We define the \emph{anchor} and the \emph{coanchor} of an $R$-linear double Courant--Dorfman algebra $(A,E,\ii{-,-},\partial,\cc{-,-})$ as the $A$-bimodule morphisms (cf., e.g.,~\cite[\S 2.3]{Roy09})
\begin{subequations}
\begin{gather}\label{eq:anchor-CD-alg.1}
\rho_E\colon E\longrightarrow \D_R A,\quad e\longmapsto \ii{e,\partial\?},
\\\label{eq:coanchor-double-lie-algebroid-double-CD.1}
j_E=i_\partial\colon\Omega_R^1A\lto E,
\end{gather}\end{subequations}
respectively. Here,~\eqref{eq:coanchor-double-lie-algebroid-double-CD.1} is the image of $\partial\in\Der_R(A,E)$ under~\eqref{sub:ncdiffforms.corep}, so it is uniquely specified in terms of the universal $R$-linear derivation $\du{}\colon A\to\Omega_R^1A$ by the condition
\begin{equation}\label{eq:coanchor-double-lie-algebroid-double-CD.2}
\partial=j_E\circ\du{}.
\end{equation}
Then $\rho_E\circ j_E=0$, because~\eqref{eq:CD.c},~\eqref{eq:anchor-CD-alg.1} and~\eqref{eq:coanchor-double-lie-algebroid-double-CD.2} imply that for all $a,b\in A$,
\[
\big((\rho_E\circ j_E)(\du{a})\big)b=\ii{j_E(\du{a}),\partial b}=\ii{\partial a,\partial b}=0.
\]
In other words, we get an $A$-bimodule complex, called the \emph{$R$-linear tangent complex} of $E$:
\begin{equation}\label{eq:exact-double-Courant-algbd.ses.1}
0\to\Omega_R^1A\lra{j_E}E\lra{\rho_E}\DDer_RA\to 0.
\end{equation}


\begin{definition}\label{def:exact-double-Courant-algbd.1}
An $R$-linear double Courant--Dorfman algebra is called \emph{exact over $R$} if so is its $R$-linear tangent complex, that is,~\eqref{eq:exact-double-Courant-algbd.ses.1} is an $A$-bimodule short exact sequence.
\end{definition}

Definition~\ref{def:exact-double-Courant-algbd.1} is a noncommutative version of the corresponding notion~\cite[Definition 2.11]{Roy09} for Courant--Dorfman algebras over a commutative algebra.

\section{Exact double Courant--Dorfman algebras}
\label{sec:exact-DC-alg}

We apply now the Cartan calculus and the reduced Cartan calculus on double derivations described in Theorem \ref{thm:Cartan-identities.1} and Proposition~\ref{prop:reduced-Cartan-identities.1}, respectively, to construct double Courant--Dorfman algebras that are exact in the sense of Definition~\ref{def:exact-double-Courant-algbd.1}. 
They are noncommutative versions of the standard exact Courant algebroids and of their twists by closed 3-forms.

\subsection{The standard exact double Courant--Dorfman algebras}
\label{sub:standard-DC-alg.1}

The following construction is based on Courant's and Dorfman's original brackets (see~\cite[\S 2.3]{Cou90} and~\cite[Theorem 2.1]{Do93}).

\begin{theorem}\label{thm:standard-DCA}
Let $R$ be an algebra and $A$ a finitely generated $R$-algebra.
%
Then the 5-tuple
\[
\big(A,E,\ii{-,-},\partial,\cc{-,-}\big)
\]
given by the following data is an $R$-linear double Courant--Dorfman algebra (called the \emph{standard exact $R$-linear double Courant--Dorfman algebra over $A$}):
\begin{enumerate}
\item[\textup{(1)}]
the $R$-algebra $A$, 
\item[\textup{(2)}]
the $A$-bimodule
\begin{equation}\label{eq:thm:standard-DCA.1}
E\defeq \D_R A\oplus\Omega^1_RA,
\end{equation}
whose elements will be denoted as sums $X+\alpha$, with $X\in\D_R A$ and $\alpha\in\Omega^1_RA$, 
\item[\textup{(3)}]
the pairing $\ii{-,-}\colon E\otimes E\to A\otimes A$ given for all $X+\alpha,Y+\beta\in E$ by
\begin{equation}\label{eq:thm:standard-DCA.2}
\ii{X+\alpha,Y+\beta}\defeq i_X\beta+(i_Y\alpha)^\sigma,
\end{equation}
\item[\textup{(4)}]
the $R$-linear derivation $\partial\,\colon A\to E$ given for all $a\in A$ by
\begin{equation}\label{eq:thm:standard-DCA.3}
\partial a\defeq 0+\du{a},
\end{equation}
\item[\textup{(5)}]
the $R$-linear double Dorfman bracket $\cc{-,-}\colon E\otimes E\to E\otimes A\oplus A\otimes E$ given by
\begin{equation}\label{eq:thm:standard-DCA.4}
\cc{X+\alpha,Y+\beta}\defeq\db{X,Y}+L_X\beta-(i_Y\du{\alpha})^\sigma,
\end{equation}
for all $X+\alpha,Y+\beta\in E$. 
\end{enumerate}
This double Courant--Dorfman algebra is non-degenerate if $A$ is smooth over $R$.
\end{theorem}

\begin{proof}
We need to prove that the 5-tuple $(A,E,\ii{-,-},\partial,\cc{-,-})$ satisfies the axioms of Definition~\ref{CD-Definition}, and that the pairing $\ii{-,-}$ is non-degenerate when $A$ is smooth over $R$.

Note that the map $\ii{-,-}\colon E\otimes E\to A\otimes A$ given by~\eqref{eq:thm:standard-DCA.2} is indeed a pairing. This follows because the map $i\colon\DDer_RA\to(\Omega^1_RA)^\vee$ in~\eqref{eq:DDer-isom.1} is an $A$-bimodule morphism, \emph{i.e.}, 
\begin{equation}\label{eq:proof.thm:standard-DCA.1}
i_{a\cdot X\cdot b}\alpha=a*(i_X\alpha)*b,\quad i_X(a\alpha b)=a(i_X\alpha)b,
\end{equation}
for all $a,b\in A$, $X\in\D_R A $, $\alpha\in\Omega^1_RA$ (by~\eqref{eq:double-derivation.A-bimod-str.1}), and applying~\eqref{eq:cyclic-permutation.1} to~\eqref{eq:proof.thm:standard-DCA.1}, we get
\begin{equation}\label{eq:proof.thm:standard-DCA.2}
(i_{a\cdot X\cdot b}\alpha)^\sigma=a(i_X\alpha)^\sigma b,\quad (i_X(a\alpha b))^\sigma=a*(i_X\alpha)^\sigma*b,
\end{equation}
and hence the maps~\eqref{eq:pairing.1} are morphisms of $A$-bimodules for all $e=X+\alpha\in E$.
This pairing is symmetric, because
\begin{equation}\label{eq:proof.thm:standard-DCA.3}
\ii{Y+\beta,X+\alpha}^\sigma=\big(i_Y\alpha+(i_X\beta)^\sigma\big)^\sigma=i_X\beta+(i_Y\alpha)^\sigma=\ii{X+\alpha,Y+\beta}.
\end{equation}
Furthermore, it determines by restriction the $A$-bimodule isomorphism
\[
i\colon\DDer_RA\stackrel{\cong}{\lto}(\Omega^1_RA)^\vee,\quad  X\longmapsto \ii{X,\?}|_{\Omega^1_RA}=i_X
\]
in~\eqref{eq:DDer-isom.1}, and the $A$-bimodule morphism
\[
\mathtt{bidual}\colon\Omega^1_RA\lto(\DDer_RA)^\vee,\quad\alpha\longmapsto \ii{\alpha,\?}|_{\DDer_RA}=i(\alpha)^\sigma=\alpha^\vee,
\]
in~\eqref{eq:canonical-map.1} (by~\eqref{eq:canonical-map.3}), which is an isomorphism if $A$ is smooth over $R$, as already observed in~\secref{sec:nc-diff-forms-double-derivations}.
Moreover, $\ii{X,\?}$ vanishes on $\DDer_RA$ for all $X\in\D_R A$, and $\ii{\alpha,\?}$ vanishes on $\Omega^1_RA$ for all $\alpha\in\Omega^1_RA$. 
Since the pairing is symmetric, it follows then that it is non-degenerate when $A$ is smooth over $R$.

Regarding the map $\cc{\?,\?}$ given by~\eqref{eq:thm:standard-DCA.4}, observe that the right-hand side of~\eqref{eq:thm:standard-DCA.4} is in $E\otimes A\oplus A\otimes E$, because the first term is the double Schouten--Nijenhuis bracket (see~\eqref{eq:R-linear-double-SN-bracket.1})
\begin{equation}\label{eq:double-Schouten-Nijenhuis.1}
\db{X,Y}\in\D_RA\otimes A\oplus A\otimes\D_RA,
\end{equation}
the second term is defined using the Lie derivative $L_X\colon\Omega^1_RA\to\Omega^1_RA\otimes A\oplus A\otimes \Omega^1_RA$ (see~\eqref{eq:Lie-derivative.8.b}),
and the third term is defined using the universal differential $\du{}\colon\Omega^1_RA\to\Omega^2_RA$, the contraction map with an element $Y\in\D_RA$ (that is, $i_Y\colon\Omega^2_RA\to\Omega^1_RA\otimes A\oplus A\otimes\Omega^1_RA$ in~\eqref{eq:contraction.1} with $\ell=2$) and the permutation given by~\eqref{eq:cyclic-permutation.1}.

To prove the theorem, we need to show that the 5-tuple $(A,E,\ii{-,-},\partial,\cc{-,-})$ satisfies~\eqref{eq:CD}. 
We start proving~\eqref{eq:CD.a} using~\eqref{eq:double-Lie-albd.axioms.double-bracket.skewsymm.1} and the Cartan homotopic formula~\eqref{eq:Lie-derivative.2}: 
\begin{align*}
&\cc{X+\alpha,Y+\beta}+\cc{Y+\beta,X+\alpha}^\sigma
\\
&=\(\db{X,Y}+\db{Y,X}^\sigma\)
+\(\du{(i_X\beta)}+i_X(\du{\beta})\)+\big(\du{(i_Y\alpha)}+i_Y(\du{\alpha})\big)^\sigma
-i_Y(\du{\alpha})^\sigma-i_X(\du{\beta})
\\&
=\du{(i_X\beta)}+(\du{(i_Y\alpha)})^\sigma
=\du{\ii{X+\alpha,Y+\beta}}. 
\end{align*}
The identities~\eqref{eq:CD.b} and~\eqref{eq:CD.c} are obviously satisfied, so we omit their proofs. 

To prove~\eqref{eq:CD.d} and~\eqref{eq:CD.e}, we use the following identity, where $a\in A$ (see~\eqref{eq:Lie-derivative.6.a}):
\begin{equation}\label{eq:exact-CD.pairing-differential}
\ii{X+\alpha,\du{a}}=i_X(\du{a})=L_Xa.  
\end{equation}
Using this identity, \eqref{eq:Lie-derivative.5} (for $\ell=2$), \eqref{eq:Lie-derivative.6.a}, \eqref{eq:double-Lie-albd.axioms.double-Leibniz.1.a}, \eqref{eq:double-Lie-albd.axioms.double-Leibniz.1.b},\eqref{eq:proof.thm:standard-DCA.1} and~\eqref{eq:proof.thm:standard-DCA.2}, we obtain 
\begin{align*}
\cc{X+\alpha,(Y+\beta)a}&
=\db{X,Y\cdot a}+L_X(\beta a)-\(i_{Y\cdot a}\du{\alpha}\)^\sigma
\\&
=\(\db{X,Y}a+Y\cdot(L_Xa)\)+\((L_X\beta)a+\beta(L_Xa)\)-\((i_Y\du{\alpha})*a\)^\sigma
\\&
=\(\db{X,Y}+(L_X\beta)-(i_Y\du{\alpha})^\sigma\)a+(Y+\beta)(L_Xa)
\\&
=\cc{X+\alpha,Y+\beta}a+(Y+\beta)\ii{X+\alpha,\du{a}},
\\
\cc{X+\alpha,a(Y+\beta)}&
=\db{X,a\cdot Y}+L_X(a\beta)-(i_{a\cdot Y}\du{\alpha})^\sigma
\\&
=\(a\db{X,Y}+(L_Xa)\cdot Y\)+((L_Xa)\beta+aL_X\beta)-(a*(i_Y\du{\alpha}))^\sigma
\\&
=a\(\db{X,Y}+L_X\beta-(i_Y\du{\alpha})^\sigma)\)+(L_Xa)(Y+\beta)
\\&
=a\cc{X+\alpha,Y+\beta}+\ii{X+\alpha,\du{a}}(Y+\beta).
\end{align*}
Thus~\eqref{eq:CD.d} and~\eqref{eq:CD.e} are satisfied.

It remains to prove \eqref{eq:CD.f} and~\eqref{eq:CD.g}, that is, for all $X+\alpha, Y+\beta, Z+\gamma\in E$,
\begin{subequations}
\label{eq:standard-CD}
\begin{align}\label{eq:standard-CD.f}
\begin{split}
\cc{X+\alpha,\cc{Y+\beta,Z+\gamma}}_L&=\cc{\cc{X+\alpha,Y+\beta},Z+\gamma}_L
\\
&\qquad +\cc{Y+\beta,\cc{X+\alpha,Z+\gamma}}_R,  
\end{split}
\\\label{eq:standard-CD.g}\hspace*{-6ex}
\begin{split}
\ii{X+\alpha,\du{\ii{Y+\beta,Z+\gamma}}}_L&=\ii{\cc{X+\alpha,Y+\beta},Z+\gamma}_L
\\
&\qquad +\ii{Y+\beta,\cc{X+\alpha,Z+\gamma}}_R.
\end{split}
\end{align}
\end{subequations}
Since $\cc{\?,\?}$ and $\ii{\?,\?}$ are additive in each variable, to prove~\eqref{eq:standard-CD}, we can consider separately the cases $\alpha=0$ or $X=0$, and $\beta=0$ or $Y=0$, and $\gamma=0$ or $Z=0$, so~\eqref{eq:standard-CD.f} becomes 8 equations, and~\eqref{eq:standard-CD.g} becomes other 8 equations.
However the identity~\eqref{eq:standard-CD.f} is trivially satisfied if two of the elements $X,Y,Z$ are zero, because $\cc{\alpha,\beta}=0$ and $\cc{X,\alpha},\cc{\alpha,X}\in\Omega^1_RA$ for all $\alpha,\beta\in\Omega^1_RA$, $X\in\D_R A$, by \eqref{eq:thm:standard-DCA.4}. Hence to prove~\eqref{eq:standard-CD.f}, it suffices to show the following identities for all $X,Y,Z\in\D_RA$ and $\alpha,\beta,\gamma\in\Omega^1_RA$:
\begin{subequations}\label{eq:standard-CD.f.1}
\begin{align}\label{eq:standard-CD.f.1.a}
\cc{X,\cc{Y,Z}}_L&=\cc{\cc{X,Y},Z}_L+\cc{Y,\cc{X,Z}}_R,
\\\label{eq:standard-CD.f.1.b}
\cc{\cc{X,Y},\gamma}_L&=\cc{X,\cc{Y,\gamma}}_L-\cc{Y,\cc{X,\gamma}}_R,
\\\label{eq:standard-CD.f.1.c}
\cc{\alpha,\cc{Y,Z}}_L&=\cc{\cc{\alpha,Y},Z}_L+\cc{Y,\cc{\alpha,Z}}_R,  
\\\label{eq:standard-CD.f.1.d}
\cc{\beta,\cc{X,Z}}_R&=\cc{X,\cc{\beta,Z}}_L-\cc{\cc{X,\beta},Z}_L.
\end{align}
\end{subequations}
Similarly,~\eqref{eq:standard-CD.g} is trivially satisfied if two of the elements $X,Y,Z$ are zero or $\alpha=\beta=\gamma=0$, since $\ii{X,Y}=\ii{\alpha,\beta}=0$, $\cc{\alpha,\beta}=0$, $\cc{X,\alpha},\cc{\alpha,X}\in \Omega^1_R A\otimes A\oplus A\otimes\Omega^1_R A$ and $\cc{X,Y}\in\D_R A\otimes A\oplus A\otimes\D_R A$ for all $\alpha,\beta\in\Omega^1_R A$, $X,Y\in\D_R A$, by \eqref{eq:thm:standard-DCA.4}. Hence to prove~\eqref{eq:standard-CD.g}, it suffices to show the following identities for all $X,Y,Z\in\D_R A$ and $\alpha,\beta,\gamma\in\Omega^1_R A$: 
\begin{subequations}\label{eq:standard-CD.g.1}
\begin{align}\label{eq:standard-CD.g.1.a}
\ii{\cc{X,Y},\gamma}_L&=\ii{X,\du{\ii{Y,\gamma}}}_L-\ii{Y,\cc{X,\gamma}}_R,
\\\label{eq:standard-CD.g.1.b}
\ii{\beta,\cc{X,Z}}_R&=\ii{X,\du{\ii{\beta,Z}}}_L-\ii{\cc{X,\beta},Z}_L,
\\\label{eq:standard-CD.g.1.c}
\ii{\alpha,\du{\ii{Y,Z}}}_L&=\ii{Y,\cc{\alpha,Z}}_R+\ii{\cc{\alpha,Y},Z}_L,
\end{align}
\end{subequations}
To prove~\eqref{eq:standard-CD.f.1} and~\eqref{eq:standard-CD.g.1}, we first show the following identities for all $X,Y,Z\in\D_R A$, $\alpha,\beta\in\Omega^1_RA$, $\omega\in\Omega^2_RA$, where we use the decompositions~\eqref{eq:Sweedler-on-Cartan-calculus.1}, \eqref{eq:Sweedler-on-Cartan-calculus.3}, and set $\Omega\defeq\Omega^\bullet_RA$:
\begin{subequations}\label{eq:standard-CD.1}
\begin{align}\label{eq:standard-CD.1.a}
(i_Y\otimes\Id_\Omega)L_X\alpha&=(i_Y\otimes\Id_A)L_X^l\alpha,
\\\label{eq:standard-CD.1.b}
(\Id_\Omega\otimes i_Y)L_X\alpha&=(\Id_A\otimes i_Y)L_X^r\alpha,
\\\label{eq:standard-CD.1.c}
(i_Y\otimes\Id_\Omega)i_X\omega&=(i_Y\otimes\Id_A)i_X^l\omega
\\\label{eq:standard-CD.1.d}
(\Id_\Omega\otimes i_Y)i_X\omega&=(\Id_A\otimes i_Y)i_X^r\omega.
\\\label{eq:pf:prelim.standard-CD.f.1.c-and-d.2.f.2.b}
i_Z\du{i_Y^{l''}\!\du{\alpha}}=L_Zi_Y^{l''}\!\du{\alpha},
\;&\quad
i_Y\du{i_Z^{r'}\!\du{\alpha}}=L_Yi_Z^{r'}\!\du{\alpha},   
\\\label{eq:pf:prelim.standard-CD.f.1.c-and-d.2.f.1.a}
i_Z\!\du{L_X^{l'}\beta}\otimes L_X^{l''}\beta&=i_Z\!\du{i_X^{l'}\!\du{\beta}}\otimes i_Z^{l''}\!\du{\beta},
\\\label{eq:pf:prelim.standard-CD.f.1.c-and-d.2.f.1.b}
i_ZL_X^{l'}\beta\otimes\du{L_X^{l''}\beta}&=i_Z\du{i_X'\beta}\otimes\du{i_X''\beta}+i_Zi_X^{l'}\!\du{\beta}\otimes\du{i_X^{l''}\!\du{\beta}},
\\\label{eq:pf:prelim.standard-CD.f.1.c-and-d.2.f.1.c}
L_ZL_X^{r'}\beta\otimes L_X^{r''}\beta&=i_Z\!\du{i_X'\beta}\otimes\du{i_X''\beta}+i_Z\!\du{i_X^{r'}\!\du{\beta}}\otimes i_X^{r''}\!\du{\beta}.
\end{align}\end{subequations}
The identities~\eqref{eq:standard-CD.1.a}--\eqref{eq:standard-CD.1.d} follow from~\eqref{eq:Sweedler-on-Cartan-calculus.1}, as $i_Y(L^{r'}_X\alpha)=i_Y(L^{l''}_X\alpha)=i_Y(i^{r'}_X\omega)=i_Y(i^{l''}_X\omega)=0$ by~\eqref{eq:contraction.4.a} and~\eqref{eq:Sweedler-on-Cartan-calculus.4}.
The identities~\eqref{eq:pf:prelim.standard-CD.f.1.c-and-d.2.f.2.b} follow from \eqref{eq:relative-nc-all-diff-forms.3}, \eqref{eq:Lie-derivative.2}, \eqref{eq:contraction.4.a}, \eqref{eq:Sweedler-on-Cartan-calculus.3} and \eqref{eq:Sweedler-on-Cartan-calculus.4}.
The identities~\eqref{eq:pf:prelim.standard-CD.f.1.c-and-d.2.f.1.a}--\eqref{eq:pf:prelim.standard-CD.f.1.c-and-d.2.f.1.c} follow by applying  \eqref{eq:Lie-derivative.2}, \eqref{eq:contraction.4.a} and~\eqref{eq:Sweedler-on-Cartan-calculus.4} to $L_X^l\beta$ and $L_X^r\beta$, because 
\begin{align*}
L_X\beta=\du{(i_X'\beta\otimes i_X''\beta)}+i_X\du{\beta}=L_X^l\beta+L_X^r\beta,
\end{align*}
by \eqref{eq:Lie-derivative.2}, \eqref{eq:Sweedler-on-Cartan-calculus.1}, \eqref{eq:Sweedler-on-Cartan-calculus.2}, \eqref{eq:Sweedler-on-Cartan-calculus.3} and \eqref{eq:Sweedler-on-Cartan-calculus.4}, and so 
\begin{gather*}
L_X^l\beta=\du{i'_X\!\du{\beta}}\otimes i_X''\beta+i_X^{l'}\!\du{\beta}\otimes i_X^{l''}\!\du{\beta},
\\
L_X^r\beta=i_X'\beta\otimes\du{i''_X\!\du{\beta}}+i_X^{r'}\!\du{\beta}\otimes i_X^{r''}\!\du{\beta}.
\end{gather*}

We start now the proof of the identities~\eqref{eq:standard-CD.f.1}. 
The identity~\eqref{eq:standard-CD.f.1.a} follows because $\cc{X,Y}=\lr{X,Y}$ by \eqref{eq:thm:standard-DCA.4}, so~\eqref{eq:standard-CD.f.1.a} is equivalent to the double Jacobi identity~\eqref{eq:double-Lie-albd.axioms.double-Jacobi.1} for the double Schouten--Nijenhuis bracket $\lr{\?,\?}$, that holds by Theorem \ref{thm:VdB-Gerstenhaber-poly-der}.

To prove~\eqref{eq:standard-CD.f.1.b}, we calculate the three terms of this identity. 
By~\eqref{eq:thm:standard-DCA.4}, $\cc{X,Y}=\lb{X,Y}$, so by~\eqref{eq:R-linear-double-SN-bracket.2},~\eqref{eq:Sweedler.R-linear-double-SN-bracket.1}, \eqref{eq:lemma:Cartan-identities.1.1.c},~\eqref{eq:extension-CD-bracket},~\eqref{eq:thm:standard-DCA.2}--
\eqref{eq:thm:standard-DCA.4}, the left-hand side of~\eqref{eq:standard-CD.f.1.b} is 
\begin{equation}
\begin{aligned}
\cc{\cc{X,Y},\gamma}_L&=L_{\lb{X,Y}_l'}\gamma\otimes_1\lb{X,Y}_l''+\ii{\lb{X,Y}_l',\gamma}\otimes_1\partial{\lb{X,Y}_l''}
\\
&\qquad -\ii{\partial{\lb{X,Y}_r''},\gamma}\otimes_1\lb{X,Y}_r''
\\
&=L_{\lb{X,Y}_l'}\gamma\otimes_1\lb{X,Y}_l''+i_{\lb{X,Y}_l'}\gamma\otimes_1\du{\lb{X,Y}_l''}
=L^{l\sim}_{\lb{x,y}_l}\gamma.
\end{aligned}
\label{eq:pf:standard-CD.f.1.b.1}
\end{equation}
By~\eqref{eq:thm:standard-DCA.4}, $\cc{X,\gamma}=L_X\gamma$ and $\cc{Y,\gamma}=L_Y\gamma$, so applying~\eqref{eq:Sweedler-on-Cartan-calculus.1.a},~\eqref{eq:Sweedler-on-Cartan-calculus.3},~\eqref{eq:extension-CD-bracket} and~\eqref{eq:thm:standard-DCA.2}--
\eqref{eq:thm:standard-DCA.4}, the two terms of right-hand side of~\eqref{eq:standard-CD.f.1.b} are given by 
\begin{subequations}\label{eq:pf:standard-CD.f.1.b.2}
\begin{align}\notag
\cc{X,\cc{Y,\gamma}}_L&=\cc{X,L_Y^{l'}\gamma}\otimes L_Y^{l''}\gamma+\ii{X,\partial(L_Y^{r'}\gamma)}\otimes L_Y^{r''}\gamma
\\\notag&
=L_X(L_Y^{l'}\gamma)\otimes L_Y^{l''}\gamma+i_X(\du{(L_Y^{r'}\gamma}))\otimes L_Y^{r''}\gamma
\\\label{eq:pf:standard-CD.f.1.b.2.a}&
=(L_X\otimes\Id_A)L_Y^l\gamma+(L_X\otimes\Id_{\Omega^1})L_Y^r\gamma
=(L_X\otimes\Id_\Omega)L_Y\gamma,
\\\notag
\cc{Y,\cc{X,\gamma}}_R&=L_X^{l'}\gamma\otimes\ii{Y,\partial(L_X^{l''}\gamma)}+L_X^{r'}\gamma\otimes\cc{Y,L_X^{r''}\gamma}
\\\notag&
=L_X^{l'}\gamma\otimes i_Y(\du{(L_X^{l''}\gamma)})+L_X^{r'}\gamma\otimes L_Y(L_X^{r''}\gamma)
\\\label{eq:pf:standard-CD.f.1.b.2.b}&
=(\Id_{\Omega^1}\otimes L_Y)L_X^l\gamma+(\Id_A\otimes L_Y)L_X^r\gamma
=(\Id_{\Omega}\otimes L_Y)L_X\gamma,
\end{align}\end{subequations}
where we used $i_X(\du{(L_Y^{r'}\gamma)})=L_X(L_Y^{r'}\gamma)$ and $i_Y(\du{(L_X^{l''}\gamma)})=L_X(L_Y^{l''}\gamma)$ (see~\eqref{eq:Lie-derivative.6.a}) in~\eqref{eq:pf:standard-CD.f.1.b.2.a} and~\eqref{eq:pf:standard-CD.f.1.b.2.b}, respectively.
By~\eqref{eq:graded-Schouten-Nijenhuis.1.7.a} and~\eqref{eq:pf:standard-CD.f.1.b.2}, the right-hand side of~\eqref{eq:standard-CD.f.1.b} is 
\begin{equation}\label{eq:pf:standard-CD.f.1.b.3}
\cc{X,\cc{Y,\gamma}}_L-\cc{Y,\cc{X,\gamma}}_R
=(L_X\otimes\Id_\Omega)L_Y\gamma-(\Id_\Omega\otimes L_Y)L_X\gamma
=\db{L_X,L_Y}_l^\sim(\gamma).
\end{equation}
Therefore, it follows from~\eqref{eq:pf:standard-CD.f.1.b.1} and~\eqref{eq:pf:standard-CD.f.1.b.3} that~\eqref{eq:standard-CD.f.1.b} is equivalent to the identity
\[
L^{l\sim}_{\lb{X,Y}_l'}\gamma=\db{L_X,L_Y}_l^\sim(\gamma),
\]
which is satisfied by the Cartan identity~\eqref{eq:prop:Cartan-identities.1.1.g}.

To prove~\eqref{eq:standard-CD.f.1.c} and~\eqref{eq:standard-CD.f.1.d}, we first show that for all $X,Y,Z\in\D_R A$ and $\alpha\in\Omega^1_R A$, 
\begin{subequations}
\begin{gather}\label{eq:pf:standard-CD.f.1.c-and-d.1}\hspace*{-1ex}
\du{i_Xi_Y^{l'}\!\du{\alpha}}\otimes i_Y^{l''}\!\du{\alpha} + i_Xi_Y^{l'}\!\du{\alpha}\otimes\du{i_Y^{l''}\!\du{\alpha}}
+ \du{i_X^{r'}\!\du{\alpha}}\otimes i_Y\!\du{i_Y^{r''}\!\du{\alpha}} + i_X^{r'}\!\du{\alpha}\otimes\du{i_Yi_X^{r''}\!\du{\alpha}}=0, 
\\
\label{eq:pf:prelim.standard-CD.f.1.c-and-d.2.f.2.a}
L_Z\!\du{\alpha}
=\du{i_Z^{l'}\!\du{\alpha}}\otimes i_Z^{l''}\!\du{\alpha}-i_Z^{l'}\!\du{\alpha}\otimes\du{i_Z^{l''}\!\du{\alpha}}+\du{i_Z^{r'}\!\du{\alpha}}\otimes i_Z^{r''}\!\du{\alpha}+i_Z^{r'}\!\du{\alpha}\otimes\du{i_Z^{r''}\!\du{\alpha}}.
\end{gather}
\end{subequations}
The identity~\eqref{eq:pf:standard-CD.f.1.c-and-d.1} follows from $\du\lb{i_X,i_Y}_l^\sim\!(\du{\alpha})=0$ by~\eqref{eq:prop:Cartan-identities.1.1.a}, as the left-hand side is
\begin{align*}
\du\db{i_X,i_Y}_l^\sim\!(\du{\alpha})&=
\du{\big((i_X\otimes\!\Id_\Omega)i_Y^l\!\du{\alpha}+(\Id_\Omega\otimes\! i_Y)i_X^r\!\du{\alpha}\big)}
\\
&=\du{\big(i_Xi_Y^{l'}\!\du{\alpha}\otimes i_Y^{l''}\!\!\du{\alpha}+i_X^{r'}\!\du{\alpha}\otimes i_Y i_X^{r''}\!\!\du{\alpha}\big)}
\end{align*}
(by~\eqref{eq:Sweedler-on-Cartan-calculus.3},~\eqref{eq:graded-Schouten-Nijenhuis.1.7.a},~\eqref{eq:standard-CD.1.c} and~\eqref{eq:standard-CD.1.d}), which is equal to the left-hand side of~\eqref{eq:pf:standard-CD.f.1.c-and-d.1}. 
The identity~\eqref{eq:pf:prelim.standard-CD.f.1.c-and-d.2.f.2.a} follows from \eqref{eq:relative-nc-all-diff-forms.3}, \eqref{eq:Lie-derivative.2}, \eqref{eq:Sweedler-on-Cartan-calculus.3} and \eqref{eq:Sweedler-on-Cartan-calculus.4}. 
We will also use the following obvious identities, where $\alpha\in(\Omega^\bullet_R A)^{\otimes 2}$ and $\beta\in\Omega^\bullet_R A$: 
\begin{subequations}\label{eq:permutations-alpha-beta.1}
\begin{align}\label{eq:permutations-alpha-beta.1.a}
\beta\otimes\alpha&=\tau_{(123)}(\alpha\otimes\beta)=\tau_{(132)}(\beta\otimes\alpha),
\\\label{eq:permutations-alpha-beta.1.b}
\alpha^\sigma\otimes\beta&=\tau_{(123)}(\alpha\otimes_1\beta),
\quad
\beta\otimes\alpha^\sigma=\tau_{(123)}(\beta\otimes_1\alpha),
\\\label{eq:permutations-alpha-beta.1.c}
\alpha^\sigma\otimes_1\beta&=\tau_{(123)}(\beta\otimes\alpha)=\tau_{(132)}(\alpha\otimes\beta).
\end{align}\end{subequations}
Next, we calculate the terms involved in~\eqref{eq:standard-CD.f.1.c} and~\eqref{eq:standard-CD.f.1.d}, using \eqref{eq:Lie-derivative.2}, \eqref{eq:lemma:Cartan-identities.1.1.a}, \eqref{eq:lemma:Cartan-identities.1.1.b}, \eqref{eq:extension-CD-bracket}, \eqref{eq:decomposition-Courant-Dorfman-def}, \eqref{eq:decomposition-Courant-Dorfman-def-1}, \eqref{eq:thm:standard-DCA.2}--\eqref{eq:thm:standard-DCA.4} and~\eqref{eq:permutations-alpha-beta.1}:
\begin{subequations}\label{eq:pf:standard-CD.f.1.c-and-d.2}
\begin{align}\label{eq:pf:standard-CD.f.1.c-and-d.2.a}
\begin{split}
\cc{\alpha,\cc{Y,Z}}_L
&=\cc{\alpha,\lb{Y,Z}_l'}\otimes\lb{Y,Z}_l''+\ii{\alpha,\du{\lb{Y,Z}_r'}}\otimes\du{\lb{Y,Z}_r''}
\\&
=-\(i_{\lb{Y,Z}_l'}\du{\alpha}\)^\sigma\otimes\lb{Y,Z}_l''
=-\tau_{(123)}\(i_{\lb{Y,Z}_l'}\du{\alpha}\otimes_1\lb{Y,Z}_l''\)
\\&
=-\tau_{(123)}\(i^{\sim l}_{\lb{Y,Z}_l}\!\du{\alpha}\),
\end{split}
\\
\notag
\cc{\cc{\alpha,Y},Z}_L&=-\cc{(i_Y\!\du{\alpha})^\sigma,Z}_L
\\\notag&
=\ii{\du{i_Y^{l''}\!\du{\alpha}},Z}\otimes_1 i_Y^{l'}\!\du{\alpha}
-\cc{i_Y^{r''}\!\du{\alpha},Z}\otimes_1i_Y^{r'}\!\du{\alpha}
-\ii{i_Y^{r''}\!\du{\alpha},Z}\otimes_1\!\du{i_Y^{r'}\!\du{\alpha}}
\\\notag&
=(i_Z\!\du{i_Y^{l''}\!\du{\alpha}})^\sigma\otimes_1 i_Y^{l'}\!\du{\alpha}
+(i_Z\!\du{i_Y^{r''}\!\du{\alpha}})^\sigma\otimes_1i_Y^{r'}\!\du{\alpha}
-(i_Zi_Y^{r''}\!\du{\alpha})^\sigma\otimes_1\!\du{i_Y^{r'}\!\du{\alpha}}
\\\label{eq:pf:standard-CD.f.1.c-and-d.2.b}&
=-\tau_{(123)}(-i_Y^{l'}\!\du{\alpha}\otimes i_Z\!\du{i_Y^{l''}\!\du{\alpha}}\!-i_Y^{r'}\!\du{\alpha}\otimes i_Z\!\du{i_Y^{r''}\!\du{\alpha}} 
\!+\!\du{i_Y^{r'}\!\du{\alpha}}\otimes i_Zi_Y^{r''}\!\du{\alpha}),
\\\label{eq:pf:standard-CD.f.1.c-and-d.2.c}
\begin{split}
\cc{Y,\cc{\alpha,Z}}_R&=-\cc{Y,(i_Z\!\du{\alpha})^\sigma}_R
=-i_Z^{l''}\!\du{\alpha}\otimes\cc{Y,i_Z^{l'}\!\du{\alpha}}
-i_Z^{r''}\!\du{\alpha}\otimes\ii{Y,\du{i_Z^{r'}\!\du{\alpha}}}
\\&
=-i_Z^{l''}\!\du{\alpha}\otimes L_Y(i_Z^{l'}\!\du{\alpha})
-i_Z^{r''}\!\du{\alpha}\otimes i_Y\!\du{i_Z^{r'}\!\du{\alpha}}
\\&
=-\tau_{(123)}\(\du{i_Yi_Z^{l'}\!\du{\alpha}}\otimes i_Z^{l''}\!\du{\alpha} \!+ i_Y\!\du{i_Z^{l'}\!\du{\alpha}}\otimes i_Z^{l''}\!\du{\alpha}\! + i_Y\!\du{i_Z^{r'}\!\du{\alpha}}\otimes i_Z^{r''}\!\du{\alpha}\),
\end{split}
\\\label{eq:pf:standard-CD.f.1.c-and-d.2.d}
\begin{split}
\cc{\beta,\cc{X,Z}}_R&
=\lb{X,Z}_l'\otimes\ii{\beta,\du{\lb{X,Z}_l''}}+\lb{X,Z}_r'\otimes\cc{\beta,\lb{X,Z}_r''}
\\&
=-\lb{X,Z}_r'\otimes\(i_{\lb{X,Z}_r''}\!\du{\beta}\)^\sigma
=-\tau_{(132)}\(\lb{X,Z}_r'\otimes i_{\lb{X,Z}_r''}\!\du{\beta}\)
\\&
=-\tau_{(132)}\(i^{r\sim}_{\lb{X,Z}_r}\!\du{\beta}\),
\end{split}
\\\label{eq:pf:standard-CD.f.1.c-and-d.2.e}
\begin{split}
\cc{X,\cc{\beta,Z}}_L&=
\cc{X,(i_Z\!\du{\beta})^\sigma}_L
=-\cc{X,i_Z^{r''}\!\du{\beta}}\otimes i_Z^{r'}\!\du{\beta}+\ii{X,\du{i_Z^{l'}\!\du{\beta}}}\otimes i_Z^{l'}\!\du{\beta}
\\&
=-L_Xi_Z^{r''}\!\du{\beta}\otimes i_Z^{r'}\!\du{\beta}-i_X\!\du{i_Z^{l''}\!\du{\beta}}\otimes i_Z^{l'}\!\du{\beta}
\\&
=-\tau_{(132)}\!\(i_Z^{r'}\!\du{\beta}\otimes\du{i_Xi_Z^{r''}\!\du{\beta}}\!+i_Z^{r'}\!\du{\beta}\otimes i_X\!\du{i_Z^{r''}\!\du{\beta}} \!+ i_Z^{l'}\!\du{\beta}\otimes i_X\!\du{i_Z^{l''}\!\du{\beta}}\).
\end{split}
\end{align}\end{subequations}
To obtain the last term of~\eqref{eq:standard-CD.f.1.d}, we also need to apply~\eqref{eq:pf:prelim.standard-CD.f.1.c-and-d.2.f.1.a}--\eqref{eq:pf:prelim.standard-CD.f.1.c-and-d.2.f.1.c}: 
\begin{equation}\label{eq:pf:standard-CD.f.1.c-and-d.2.f}\begin{split}
\cc{\cc{X,\beta},Z}_L&
=\cc{L_X^{l'}\beta,Z}\otimes_1L_X^{l''}\beta+\ii{L_X^{l'}\beta,Z}\otimes_1\du{L_X^{l''}\beta}-\ii{\du{L_X^{r'}\beta},Z}\otimes_1L_X^{r''}\beta
\\&
=-(i_Z\!\du{L_X^{l'}\beta})^\sigma\otimes_1L_X^{l''}\beta+(i_ZL_X^{l'}\beta)^\sigma\otimes_1\du{L_X^{l''}\beta}-(L_ZL_X^{r'}\beta)^\sigma\otimes_1L_X^{r''}\beta
\\&
=-\tau_{(132)}\(i_Z\!\du{L_X^{l'}\beta}\otimes L_X^{l''}\beta-i_ZL_X^{l'}\beta\otimes\du{L_X^{l''}\beta}+L_ZL_X^{r'}\beta\otimes L_X^{r''}\beta\)
\\&
=-\tau_{(132)}\(i_Z\!\du{i_X^{l'}\!\du{\beta}}\otimes i_X^{l''}\!\du{\beta}-i_Zi_X^{l'}\!\du{\beta}\otimes\du{i_X^{l''}\!\du{\beta}}+i_Z\!\du{i_X^{r'}\!\du{\beta}}\otimes i_X^{r''}\!\du{\beta}\).
\end{split}\end{equation}
To conclude the proof of~\eqref{eq:standard-CD.f.1.c} and~\eqref{eq:standard-CD.f.1.d},
we first use~\eqref{eq:pf:prelim.standard-CD.f.1.c-and-d.2.f.2.b} and~\eqref{eq:pf:prelim.standard-CD.f.1.c-and-d.2.f.2.a}, obtaining
\begin{subequations}\label{eq:pf:prelim.standard-CD.f.1.c-and-d.2.f.3}
\begin{align}\label{eq:pf:prelim.standard-CD.f.1.c-and-d.2.f.3.a}
(i_Y\otimes\Id_\Omega)\circ L_Z\!\du{\alpha}&
=i_Y\!\du{i_Z^{l'}\!\du{\alpha}}\otimes i_Z^{l''}\!\du{\alpha}-i_Yi_Z^{l'}\!\du{\alpha}\otimes\du{i_Z^{l''}\!\du{\alpha}}+i_Y\!\du{i_Z^{r'}\!\du{\alpha}}\otimes i_Z^{r''}\!\du{\alpha},
\\
(\Id_\Omega\otimes L_Z)\circ i_Y\!\du{\alpha}&
=i_Y^{l'}\!\du{\alpha}\otimes i_Z\!\du{i_Y^{l''}\!\du{\alpha}}+i_Y^{r'}\!\du{\alpha}\otimes\du{i_Zi_Y^{r''}\!\du{\alpha}}+i_Y^{r'}\!\du{\alpha}\otimes i_Z\!\du{i_Y^{r''}\!\du{\alpha}}.
\end{align}\end{subequations}
By a similar calculation (or reading~\eqref{eq:pf:prelim.standard-CD.f.1.c-and-d.2.f.3} from right to left, exchanging the superscripts $l', l''$ with $r'', r'$, respectively, and replacing $Y$ and $\alpha$ by $X$ and $\beta$, respectively), we get
\begin{subequations}\label{eq:pf:prelim.standard-CD.f.1.c-and-d.2.f.4}
\begin{align}\label{eq:pf:prelim.standard-CD.f.1.c-and-d.2.f.4.a}
(\Id_\Omega\otimes i_X)\circ L_Z\!\du{\beta}&
=i_Z^{l'}\!\du{\beta}\otimes i_X\!\du{i_Z^{l''}\du{\beta}}+i_Z^{r'}\!\du{\beta}\otimes i_X\!\du{i_Z^{r''}\du{\beta}}-\du{i_Z^{r'}\!\du{\beta}}\otimes i_Xi_Z^{r''}\!\du{\beta},
\\
(L_Z\otimes\Id_\Omega)\circ i_X\!\du{\beta}&
=\du{i_Zi_X^{l'}\!\du{\beta}}\otimes i_X^{l''}\!\du{\beta}+i_Z\!\du{i_X^{l'}\!\du{\beta}}\otimes i_X^{l''}\!\du{\beta}+i_Z\!\du{i_X^{r'}\!\du{\beta}}\otimes i_X^{r''}\!\du{\beta}.
\end{align}\end{subequations}
By \eqref{eq:graded-Schouten-Nijenhuis.1.7}, \eqref{eq:pf:standard-CD.f.1.c-and-d.1} (with $X,Y,\alpha$ replaced by $Y,Z,\alpha$ to get~\eqref{eq:pf:standard-CD.f.1.c-and-d.3.a} and by $Z,X,\beta$ to get~\eqref{eq:pf:standard-CD.f.1.c-and-d.3.b}) and \eqref{eq:pf:standard-CD.f.1.c-and-d.2}--\eqref{eq:pf:prelim.standard-CD.f.1.c-and-d.2.f.4}, the right-hand sides of~\eqref{eq:standard-CD.f.1.c}, \eqref{eq:standard-CD.f.1.d} are
\begin{subequations}\label{eq:pf:standard-CD.f.1.c-and-d.3}
\begin{align}\label{eq:pf:standard-CD.f.1.c-and-d.3.a}
\begin{split}
\cc{\cc{\alpha,Y},Z}_L+\cc{Y,\cc{\alpha,Z}}_R&
=-\tau_{(123)}\((i_Y\otimes\Id_\Omega)\circ L_Z\!\du{\alpha}-(\Id_\Omega\otimes L_Z)\circ i_Y\!\du{\alpha}\)
\\&
=-\tau_{(123)}\(\db{i_Y,L_Z}_l^\sim(\du{\alpha})\),
\end{split}
\\\label{eq:pf:standard-CD.f.1.c-and-d.3.b}
\begin{split}
\cc{X,\cc{\beta,Z}}_L-\cc{\cc{X,\beta},Z}_L&
=-\tau_{(132)}\((\Id_\Omega\otimes i_X)\circ L_Z\!\du{\beta}-(L_Z\otimes\Id_\Omega)\circ i_X\!\du{\beta}\)
\\&
=-\tau_{(132)}\(\db{i_X,L_Z}_r^\sim(\du{\beta})\).
\end{split}\end{align}\end{subequations}
Therefore~\eqref{eq:standard-CD.f.1.c} and~\eqref{eq:standard-CD.f.1.d} follow from~\eqref{eq:prop:Cartan-identities.1.1.c},~\eqref{eq:prop:Cartan-identities.1.1.d}, \eqref{eq:pf:standard-CD.f.1.c-and-d.2.a}, \eqref{eq:pf:standard-CD.f.1.c-and-d.2.d} and \eqref{eq:pf:standard-CD.f.1.c-and-d.3}. 

We prove now~\eqref{eq:standard-CD.g.1}. 
We start computing the terms involved in these identities, using \eqref{eq:Sweedler.R-linear-double-SN-bracket.1}, \eqref{eq:double-Lie-albd.axioms.double-Jacobi.1}, \eqref{eq:extension-pairing-first-second}, \eqref{eq:ext-pairing-zero}, 
\eqref{eq:extended-differential.2}, \eqref{eq:contraction.6}, \eqref{eq:Lie-derivative.6.a}, \eqref{eq:Sweedler-on-Cartan-calculus.1}--\eqref{eq:Sweedler-on-Cartan-calculus.4}, \eqref{eq:lemma:Cartan-identities.1.1.a}, \eqref{eq:lemma:Cartan-identities.1.1.b}, \eqref{eq:thm:standard-DCA.2}--\eqref{eq:thm:standard-DCA.4} and \eqref{eq:standard-CD.1}:
\begin{subequations}\label{eq:pf:standard-CD.g.1}
\begin{align*}
\ii{\cc{X,Y},\gamma}_L&=\ii{\lb{X,Y},\gamma}_L
=\ii{\lb{X,Y}_l',\gamma}\otimes_1\lb{X,Y}_l''
\\&
=i_{\lb{X,Y}_l'}(\gamma)\otimes_1\lb{X,Y}_l''=i^{l\sim}_{\lb{X,Y}_l}\gamma,
\\
\ii{X,\du{\ii{Y,\gamma}}}_L&=\ii{X,\du{(i_Y\gamma)}}_L=\ii{X,\du{(i'_Y\gamma)}}\otimes i''_Y\gamma
=i_X\du{(i'_Y\gamma)}\otimes i''_Y\gamma
\\&
=L_X(i'_Y\gamma)\otimes i''_Y\gamma=((L_X\otimes\Id_\Omega)\circ i_Y)(\gamma), 
\\
\ii{Y,\cc{X,\gamma}}_R&=\ii{Y,L_X\gamma}_R=L^{r'}_X\gamma\otimes i_Y(L^{r''}_X\gamma)
\\&
=((\Id_A\otimes i_Y)\circ L^r_X)(\gamma)=(\Id_A\otimes i_Y)(L_X\gamma),
\\
\ii{\beta,\cc{X,Z}}_R&=\ii{\beta,\lb{X,Z}}_R=\lb{X,Z}_r'\otimes\ii{\beta,\lb{X,Z}_r''}=\lb{X,Z}_r'\otimes\(i_{\lb{X,Z}_r''}\beta\)^\sigma
\\&
=\tau_{(132)}\(\lb{X,Z}_r'\otimes_1 i_{\lb{X,Z}_r''}\beta\)=\tau_{(132)}\(i^{r\sim}_{\lb{X,Z}_r}\beta\),
\\
\ii{X,\du{\ii{\beta,Z}}}_L&=\ii{X,\du{(i_Z\beta)}^\sigma}_L
=\ii{X,\du{(i''_z\beta)}}\otimes i'_Z\beta=i_X(\du{(i''_Z\beta)})\otimes i'_Z\beta
\\&
=L_X(\du{(i''_Z\beta)})\otimes i'_Z\beta=\tau_{(132)}\big(i'_Z\beta\otimes L_X(i''_Z\beta)\big)
=\tau_{(132)}\big((\Id_A\otimes L_X)(i_Z\beta)\big),
\\
\ii{\cc{X,\beta},Z}_L&
=\ii{L_X\beta,Z}_L=\ii{L^{l'}_X\beta,Z}\otimes_1L^{l''}_X\beta=\(i_Z(L^{l'}_X\beta)\)^\sigma\otimes_1L^{l''}_X\beta
\\
&=\tau_{(132)}\!\big(i_Z(L^{l'}_X\beta)\otimes L^{l''}_X\beta\big)\!
=\tau_{(132)}\!\((i_Z\!\otimes \Id_A)(L^l_Z\beta)\)\!
\\
&=\tau_{(132)}\big((i_Z\otimes \Id_A)(L_Z\beta)\big),
\\
\ii{\alpha,\du{\ii{Y,Z}}}_L&=0,
\\
\ii{Y,\cc{\alpha,Z}}_R&
=-\ii{Y,(i_Z\du{\alpha})^\sigma}_R=-i^{l''}_Z\du{\alpha}\otimes\ii{Y,i^{l'}_Z\du{\alpha}}_R
=-i^{l''}_Z\du{\alpha}\otimes i_Y(i^{l'}_Z\du{\alpha})
\\&
=-\tau_{(123)}\big(i_Y(i^{l'}_Z\du{\alpha})\otimes i^{l''}_Z\du{\alpha}\big)
=-\tau_{(123)}\big((i_Y\otimes\Id_A)(i^l_Z(\du{\alpha}))\big)
\\&
=-\tau_{(123)}\big((i_Y\otimes\Id_\Omega)(i_Z(\du{\alpha}))\big),
\\
\ii{\cc{\alpha,Y},Z}_L&
=-\ii{(i_Y\du{\alpha})^\sigma,Z}_L=-\ii{i^{r''}_Y\du{\alpha},Z}\otimes_1i^{r'}_Y\du{\alpha}
=-\(i_Z(i^{r''}_Y\du{\alpha})\)^\sigma\otimes_1i^{r'}_Y\du{\alpha}
\\&
=-\tau_{(123)}\big(i^{r'}_Y\du{\alpha}\otimes i_Z(i^{r''}_Y\du{\alpha})\big)
=-\tau_{(123)}\big((\Id_A\otimes i_Z)i^r_Y(\du{\alpha})\big)
\\&
=-\tau_{(123)}\big((\Id_A\otimes i_Z)i_Y(\du{\alpha})\big).
\end{align*}
\end{subequations}
Using these calculations and~\eqref{eq:graded-Schouten-Nijenhuis.1.7}, we can now obtain the right-hand sides of~\eqref{eq:standard-CD.g.1}: 
\begin{subequations}
\begin{align*}
\ii{X,\du{\ii{Y,\gamma}}}_L-\ii{Y,\cc{X,\gamma}}_R
&=\db{i_X,L_Y}_l^\sim(\gamma),
\\
\ii{X,\du{\ii{\beta,Z}}}_L-\ii{\cc{X,\beta},Z}_L
&=\tau_{(132)}\big(\db{L_X,i_Z}_r^\sim(\beta)\big),
\\
\ii{Y,\cc{\alpha,Z}}_R+\ii{\cc{\alpha,Y},Z}_L
&=-\tau_{(123)}\big(\db{i_Z,i_Y}_l^\sim(\du{\alpha})\big). 
\end{align*}
\end{subequations}
Therefore,~\eqref{eq:standard-CD.g.1.a} follows from~\eqref{eq:prop:Cartan-identities.1.1.c}, 
\eqref{eq:standard-CD.g.1.b} follows from \eqref{eq:prop:Cartan-identities.1.1.d} and \eqref{eq:prop:Cartan-identities.1.1.f}, and \eqref{eq:standard-CD.g.1.c} follows from~\eqref{eq:prop:Cartan-identities.1.1.a}. 
This completes the proof of Theorem \ref{thm:standard-DCA}.
\end{proof}

\subsection{Twisting the standard exact double Courant--Dorfman algebra}
\label{sec:twisted}

We construct now a noncommutative analogue of \v{S}evera's deformation~\cite{Sev00} of the standard Courant (or Dorfman) bracket by a closed 3-form on a manifold (see also~\cite{SW01} for the geometry of twisted Poisson structures and~\cite[\S2.4]{Roy09} for the generalization to Courant--Dorfman algebras).


Using the standard exact $R$-linear double Courant--Dorfman algebra
\[
(A,E,\ii{-,-},\partial,\cc{-,-})
\]
of Theorem \ref{thm:standard-DCA}, where $E=\D_R A\oplus\Omega^1_RA$, and a 3-form $H\in \dR_R^3 A$ in the Karoubi--de Rham complex (see~\eqref{eq:def-Karoubi-de-Rham}), we define the \emph{$H$-twisted double Dorfman bracket} 
\begin{equation}\label{eq:def-H-twisted-double-bracket.1}
\cc{-,-}_H\colon E\times E\longrightarrow E\otimes A\oplus A\otimes E,
\end{equation}
by the following formula for all $X+\alpha,Y+\beta\in E$: 
\begin{equation}\label{eq:def-H-twisted-double-bracket.2}
\cc{X+\alpha,Y+\beta}_H\defeq \cc{X+\alpha,Y+\beta}+i_X\iota_YH.
\end{equation}

\begin{theorem}\label{thm:twist-CD-algebra-double}
Let $(A,E,\ii{-,-},\partial,\cc{-,-})$ be the standard exact $R$-linear double Courant--Dorfman algebra of Theorem \ref{thm:standard-DCA}, and $H\in\dR_R^3 A$.
If $H$ is closed in $\dR_R^4A$, then $(A,E,\ii{-,-},\partial,\cc{-,-}_H)$ is an $R$-linear double Courant--Dorfman algebra.
\end{theorem}

\begin{remark}\label{rem:twisted-CDA.1}
In the special case of quiver path algebras, a similar result was proved in~\cite[Lemma 8.5]{ACF17} in the formalism of \emph{noncommutative} differential graded symplectic manifolds.
\end{remark}

\begin{proof}
We need to check that $(A,E,\ii{-,-},\partial,\cc{-,-}_H)$ satisfies the axioms of Definition \ref{CD-Definition}.
Since $\iota_YH\in\Omega^2$, where $\Omega\defeq\Omega^\bullet_RA$ for simplicity, applying the decomposition~\eqref{eq:Sweedler-on-Cartan-calculus.1.b}, 
\[
i_X\iota_YH=i^l_X\iota_YH+i^r_X\iota_YH\in\Omega^1\otimes A\oplus A\otimes\Omega^1,
\]
where $i^l_X\iota_YH=i^{l'}_X\iota_YH\otimes i^{l''}_X\iota_YH\in \Omega^1\otimes A$ and $i^r_X\iota_YH =i^{r'}_X\iota_YH\otimes i^{r''}_X\iota_YH \in A\otimes \Omega^1$, with $i^{l'}_X\iota_YH,  i^{r''}_X\iota_YH\in\Omega^1$ and $i^{r'}_X\iota_YH,  i^{l''}_X\iota_YH\in A$.
We will prove now the identities~\eqref{eq:CD} for $\cc{-,-}_H$. 
While \eqref{eq:CD.b} and \eqref{eq:CD.c} are immediate by the definition of $\cc{-,-}_H$, 
the first one \eqref{eq:CD.a} follows from the corresponding identity for $\cc{-,-}$ and \eqref{eq:identidades-cruciales-reducida.a}:
\begin{align*}
\cc{X+\alpha,Y+\beta}_H&+ \cc{Y+\beta, X+\alpha}^\sigma_H
\\
&= \cc{X+\alpha,Y+\beta}+ i_X\iota_Y H + \cc{Y+\beta, X+\alpha}^\sigma+\big(i_Y\iota_X H\big)^\sigma
\\
&=\du{\ii{X+\alpha,Y+\beta}}.
\end{align*}
The axiom \eqref{eq:CD.d} follows from the corresponding one for $\cc{-,-}$ and the facts that $\iota_{Y\cdot a}H=(\iota_Y H)a$ for $a\in A$ (see the proof of \cite[Lemma 2.8.6]{CBEG07}) and $i_X$ is an $A$-bimodule morphism:
\begin{align*}
\cc{X+\alpha, (Y+\beta)a}_H&=\cc{X+\alpha, Ya+\beta a}+i_X\iota_{Ya}H
\\
&=\cc{X+\alpha, (Y+\beta)a}+i_X\big((\iota_{Y}H)a\big)
\\
&=\cc{X+\alpha, Y+\beta} a + (Y+\beta)\ii{X+\alpha, \du{a}} + \big(i_X\iota_Y H\big)a
\\
&=\cc{X+\alpha, Y+\beta}_H a + (Y+\beta)\ii{X+\alpha, \du{a}}.
\end{align*}
The axiom \eqref{eq:CD.e} is similar, so we leave the proof to the reader.

The double Jacobi identity \eqref{eq:CD.f} for $\cc{-,-}_H$ is
\begin{equation}
\cc{e,\cc{f,g}_H}_{H,L}= \cc{\cc{e,f}_H,g}_{H,L} + \cc{f,\cc{e,g}_H}_{H,R},
\label{eq:Jacobi-doble-twist-inicio}
\end{equation}
where $e=X+\alpha$, $f=Y+\beta$ and $g=Z+\gamma$, with $X,Y,Z\in\D_R A$ and $\alpha,\beta,\gamma\in\Omega^1$.
To prove~\eqref{eq:Jacobi-doble-twist-inicio}, we calculate the left-hand side using~\eqref{eq:contraction.4.a}, \eqref{eq:thm:Cartan-identities.1.b}, \eqref{eq:reduced-homotopic-Cartan}, \eqref{eq:identidades-cruciales-reducida.b}, \eqref{eq:extension-CD-bracket}:
\begin{align*}
\cc{e,\cc{f,g}_H}_{H,L}&=\cc{X+\alpha,\cc{Y+\beta,Z+\gamma}_H}_{H,L}
\\
&=\cc{X+\alpha,\cc{Y+\beta,Z+\gamma}}_{L}
+(i_X\otimes \Id_A)\iota_{\lr{Y,Z}'_l}H\otimes\lr{Y,Z}''_l
\\
&\qquad +\cc{X+\alpha, i^{l'}_Y\iota_ZH}_H\otimes i^{l''}_Y\iota_ZH+\cc{X+\alpha, i^{r'}_Y\iota_ZH}_H\otimes i^{r''}_Y\iota_ZH
\\
&=\cc{e,\cc{f,g}}_L
+(i_X\otimes \Id_{\Omega})\big(\iota_{\lr{Y,Z}'_l}H\otimes\lr{Y,Z}''_l+\lr{Y,Z}'_r\otimes\iota_{\lr{Y,Z}''_r}H\big)
\\
&\qquad +(L_Z\otimes \Id_A)i^l_Y\iota_ZH+i_X(\du{}i^{r'}_Y\iota_ZH)\otimes i^{r''}_Y\iota_ZH
\\
&=\cc{e,\cc{f,g}}_{L}
+(i_X\otimes \Id_{\Omega})\big(i_Y\mathcal{L}_ZH-\sigma_{(12)}L_Z\iota_YH\big)
\\
&\qquad +(L_Z\otimes \Id_A)i^l_Y\iota_ZH+(L_X\otimes \Id_{\Omega^1})i^{r}_Y\iota_ZH
\\
&=\cc{e,\cc{f,g}}_{L}
+(i_X\otimes \Id_{\Omega})\big(i_Y\mathcal{L}_ZH-\sigma_{(12)}L_Z\iota_YH\big) 
+(L_Z\otimes \Id_{\Omega})i_Y\iota_ZH
\\
&=\cc{e,\cc{f,g}}_{L}
+(i_X\otimes \Id_{\Omega})\big(i_Y(\du{\iota_ZH})-\sigma_{(12)}L_Z\iota_YH\big) 
+(L_Z\otimes \Id_{\Omega})i_Y\iota_ZH,
\end{align*}
where in the last identity we used the fact that $H$ is closed.

To simplify the handling of the first term on the right-hand side of  \eqref{eq:Jacobi-doble-twist-inicio}, we perform a preliminary calculation. 
By \eqref{eq:graded-Schouten-Nijenhuis.1.7.a}, \eqref{eq:lemma:Cartan-identities.1.1.a}, \eqref{eq:prop:Cartan-identities.1.1.c} and \eqref{eq:thm:Cartan-identities.1.b}, we have
\begin{align*}
&i_{\lr{X,Y}'_l}\iota_ZH\otimes_1\lr{X,Y}''_l=i^{l\sim}_{\lr{X,Y}_l}(\iota_ZH)=\lr{i_X,L_Y}^\sim_l(\iota_ZH)
\\
&=(i_X\otimes\Id_{\Omega})L_Y(\iota_ZH)
-(\Id_{\Omega}\otimes L_Y)i_X(\iota_ZH)
\\
&=(i_X\du{})(i^{l'}_Y\iota_ZH)\otimes i^{l''}_Y\iota_ZH-i_X(i^{l'}_Y\iota_ZH)\otimes\du{} i^{l''}_Y\iota_ZH+(i_X\du{})(i^{r'}_Y\iota_ZH)\otimes i^{r''}_Y\iota_ZH)
\\
&\qquad +(i_X\otimes \Id_{\Omega})(i_Y\du{})(\iota_ZH)-(\Id_{\Omega}\otimes L_Y)i_X(\iota_ZH)
\\
&=(L_X\otimes\Id_A)i^l_Y\iota_ZH-(\du{}i_X\otimes \Id_A)i^l_Y\iota_ZH-(i_X\otimes \Id_{\Omega^1})(\Id_{\Omega}\otimes \du{})i^l_Y\iota_ZH+(L_X\otimes\Id_{\Omega^1})i^r_Y\iota_ZH
\\
&\qquad +(i_X\otimes\Id_{\Omega})(i_Y\du{})(\iota_ZH)-(\Id_{\Omega}\otimes L_Y)i_X(\iota_ZH).
\end{align*}
Using this calculation and \eqref{eq:extension-CD-bracket}, the first term on the right-hand side of \eqref{eq:Jacobi-doble-twist-inicio} becomes
\begin{align*}
&\cc{\cc{e,f}_H,g}_{H,L}=\cc{\cc{X+\alpha,Y+\beta}_H,Z+\gamma}_{H,L}
\\
&=\cc{\cc{X+\alpha,Y+\beta},Z+\gamma}_{L}+ i_{\lr{X,Y}'_l}\iota_ZH\otimes_1\lr{X,Y}''_l + \cc{i^{l'}_X\iota_YH,Z+\gamma}\otimes_1 i^{l''}_X\iota_YH
\\
&\qquad +\ii{i^{l'}_X\iota_YH,Z+\gamma}\otimes_1\du{}i^{l''}_X\iota_YH-\ii{\du{}i^{r'}_X\iota_YH,Z+\gamma}\otimes_1 i^{r''}_X\iota_YH
\\
&= \cc{\cc{e,f},g}_{L}
+ (L_X\otimes\Id_A)i^l_Y\iota_ZH-(\du{}i_X\otimes \Id_A)i^l_Y\iota_ZH-(i_X\otimes \Id_{\Omega^1})(\Id_{\Omega}\otimes \du{})i^l_Y\iota_ZH\\
&\qquad +(L_X\otimes\Id_{\Omega^1})i^r_Y\iota_ZH
+(i_X\otimes\Id_{\Omega})i_Y\du{}(\iota_ZH)-(\Id_{\Omega}\otimes L_Y)i_X(\iota_ZH)
\\
&\qquad + \tau_{(132)}\big((i_Z\otimes \Id_{\Omega^1})(\Id_{\Omega^1}\otimes\du{})i^l_X\iota_YH-(i_Z\du{}\otimes\Id_{\Omega})i^l_X\iota_YH\big).
\end{align*}
By \eqref{eq:prop:Cartan-identities.1.1.d}, \eqref{eq:thm:Cartan-identities.1.b}, \eqref{eq:identidades-cruciales-reducida.a} and \eqref{eq:extension-CD-bracket}, the last term of \eqref{eq:Jacobi-doble-twist-inicio} is
\begin{align*}
&\cc{f,\cc{e,g}_H}_{H,R}=\cc{y+\beta, \cc{X+\alpha,Z+\gamma}_H}_{H,R}
\\
&=\cc{f,\cc{e,g}}_{R} \!+\!\lr{X,Z}'_r\!\otimes \!i_Y\iota_{\lr{X,Z}''_r}H\!+\!i^{l'}_X\iota_ZH\!\otimes\!\ii{ Y\!+\!\beta,\du{} i^{l''}_X\iota_ZH}+i^{r'}_X\iota_ZH\!\otimes \! L_Y(i^{r''}_X\iota_ZH)
\\
&=\cc{f,\cc{e,g}}_{R}
 \!-\!\lr{X,Z}'_r\!\otimes \!\big(i_{\lr{X,Z}''_r}\iota_YH\big)^\sigma
\!+\! (\Id_{\Omega^1}\!\otimes \! i_Y)(\Id_{\Omega^1}\!\otimes\!\du{})i^l_X\iota_ZH\!+\!(\Id_A\!\otimes\! L_Y)i^r_X\iota_ZH
\\
&=\cc{f,\cc{e,g}}_{R}
 -\tau_{(132)}\lr{i_X,L_Z}^\sim_r(\iota_YH)
+ (\Id_{\Omega^1}\otimes L_Y)i^l_X\iota_ZH+(\Id_A\otimes L_Y)i^r_X\iota_ZH
\\
&=\cc{f,\cc{e,g}}_{R}
 -\tau_{(132)}\Big((\Id_{\Omega}\otimes i_X)L_Z\iota_Y H-(L_Z\otimes\Id_{\Omega})i_X\iota_Y\! H\Big) + (\Id_{\Omega}\otimes L_Y)i_X\iota_Z H 
\\
&=\cc{f,\cc{e,g}}_{R}
 -\tau_{(132)}\Big((\Id_{\Omega}\otimes i_X)L_Z\iota_Y\! H\! +\!((\du{}i_Z+i_Z\du{}) \otimes\Id_{\Omega})i_X\iota_Y\! H\Big) 
 + (\Id_{\Omega}\otimes L_Y)i_X\iota_ZH
 \\
&=\cc{f,\cc{e,g}}_{R}
+ (\Id_{\Omega}\!\otimes\! L_Y\! )i_X\iota_ZH
 \\&
\quad
+\tau_{(132)}\!\((\du{}i_Z\!\otimes\!\Id_A)i^l_X\iota_Y\! H\!+\!(i_Z\!\du{}\!\otimes\!\Id_{\Omega})i_X\iota_Y\! H\!-\!(\Id_{\Omega}\!\otimes\! i_X\!)L_Z\iota_Y\! H\).
%
\end{align*}
Using the above calculations and the identity~\eqref{eq:CD.g} for $\cc{\?,\?}$, one can now check that many cancellations occur, so that \eqref{eq:Jacobi-doble-twist-inicio} reduces to the identity
\begin{equation*}
\begin{aligned}
0&=\tau_{(132)}\big((i_Z\otimes\du{})(\Id_{\Omega^1}\otimes\du{})i^{l}_X\iota_YH+(\du{}i_Z\otimes\Id_A)i^l_X\iota_YH\big)
\\
&\qquad-(\du{}i_X\otimes \Id_A)i^l_Y\iota_ZH -(i_X\otimes\Id_{\Omega^1})(\Id_{\Omega^1}\otimes \du{})i^l_Y\iota_ZH.
\end{aligned}
\label{eq:cancelacion-twist-final-claim}
\end{equation*}
This identity is a consequence of \eqref{eq:prop:Cartan-identities.1.1.a} and the identity $i^r_Z\iota_YH+\sigma_{(12)}i^l_Y\iota_ZH=0$ (that is the projection of \eqref{eq:identidades-cruciales-reducida.a} onto $A\otimes\Omega^1_R A$):
\begin{align*}
0&=\tau_{(132)}\big(\du{}\lr{i_Z,i_X}^\sim_l\iota_YH\big)
\\
&=\tau_{(132)}\Big(\du{}\big((i_Z\otimes \Id_A)i^l_X\iota_YH+(\Id_A\otimes i_X)i^r_Z\iota_YH\big)\Big)
\\
&=\tau_{(132)}\big((\du{}i_Z\otimes\Id_A)i^l_X\iota_YH+(i_Z\otimes\Id_{\Omega^1})(\Id_{\Omega^1}\otimes\du{})i^l_X\iota_YH
 \\
 &\qquad +(\Id_A\otimes\du{}i_X)i^r_Z\iota_YH+(\du{}\otimes i_X)i^r_Z\iota_YH\big)
\\
&=\tau_{(132)}\big((\du{}i_Z\otimes\Id_A)i^l_X\iota_YH+(i_Z\otimes\Id_{\Omega^1})(\Id_{\Omega^1}\otimes\du{})i^l_X\iota_YH
\\
 &\qquad -(\Id_A\otimes\du{}i_X)(i^l_Y\iota_ZH)^\sigma-(\du{}\otimes i_X)(i^l_Y\iota_ZH)^\sigma\big)
 \\
 &=\tau_{(132)}\big( (\du{}i_Z\otimes\Id_A)i^l_X\iota_YH+\big(i_Z\otimes\Id_{\Omega^1})(\Id_{\Omega^1}\otimes\du{})i^{l}_X\iota_YH\big)
 \\
 &\qquad -(\du{}i_X\otimes\Id_A)i^l_Y\iota_ZH-(i_X\otimes\Id_{\Omega^1})(\Id_{\Omega^1}\otimes\du{})i^l_Y\iota_ZH.
\end{align*}

Finally, the identity~\eqref{eq:CD.g} for $\cc{\?,\?}_H$ is
\begin{equation}
\ii{X+\alpha,\partial\ii{Y+\beta,Z+\gamma}}_L=\ii{\cc{X+\alpha,Y+\beta}_H,Z+\gamma}_L+\ii{Y+\beta,\cc{X+\alpha,Z+\gamma}_H}_L.
\label{eq:56g-twist}
\end{equation}
By \eqref{eq:extension-pairing-first-second} and the identity \eqref{eq:CD.g} for $\cc{-,-}$, the right-hand side is
\begin{align*}
&\ii{\cc{X+\alpha,Y+\beta}_H,Z+\gamma}_L+\ii{Y+\beta,\cc{X+\alpha,Z+\gamma}_H}_L
\\
&=\ii{\cc{X\!+\!\alpha,Y\!+\!\beta},Z\!+\!\gamma}_L\!+\!\ii{i_X\iota_YH,Z\!+\!\gamma}_L\!+\!\ii{Y\!+\!\beta,\cc{X\!+\!\alpha,Z\!+\!\gamma}}_R\!+\!\ii{Y\!+\!\beta,i_X\iota_ZH}_R
\\
&=\ii{X+\alpha,\partial\ii{Y+\beta,Z+\gamma}}_L+\sigma_{(132)}\big((i_Z\otimes\Id_A)i^l_X\iota_YH\big)+(\Id_A\otimes i_Y)i^r_X\iota_ZH,
\end{align*}
so~\eqref{eq:56g-twist} is equivalent to the vanishing of $\sigma_{(132)}\big((i_Z\otimes\Id_A)i^l_X\iota_YH\big)+(\Id_A\otimes i_Y)i^r_X\iota_ZH$.
This follows from \eqref{eq:prop:Cartan-identities.1.1.a}, \eqref{eq:prop:Cartan-identities.1.1.b} and \eqref{eq:identidades-cruciales-reducida.a} projected onto $\Omega^1_R A\otimes A$:
\begin{align*}
0&=\sigma_{(132)}\big(\lr{i_Z,i_X}^\sim_l\iota_YH \big)+ \lr{i_Y,i_X}^\sim_r\iota_ZH
\\
&=\sigma_{(132)}\big((i_Z\otimes\Id_A)i^l_X\iota_YH+(\Id_A\otimes i_X)i^r_Z\iota_YH\big)+(\Id_A\otimes i_Y)i^r_X\iota_ZH + (i_X\otimes\Id_A)i^l_Y\iota_ZH
\\
&=\sigma_{(132)}\big((i_Z\otimes\Id_A)i^l_X\iota_YH\!-\!(\Id_A\otimes i_X)\big(i^l_Y\iota_ZH\big)^{\sigma}\big)\!+\!(\Id_A\otimes i_Y)i^r_X\iota_ZH \!+\! (i_X\otimes\Id_A)i^l_Y\iota_ZH
\\
&=\sigma_{(132)}\big((i_Z\otimes\Id_A)i^l_X\iota_YH\big)-( i_X\otimes\Id_A)i^l_Y\iota_ZH\big)+(\Id_A\otimes i_Y)i^r_X\iota_ZH +  (i_X\otimes\Id_A)i^l_Y\iota_ZH
\\
&=\sigma_{(132)}\big((i_Z\otimes\Id_A)i^l_X\iota_YH\big) +(\Id_A\otimes i_Y)i^r_X\iota_ZH.
&\qedhere\end{align*}
\end{proof}

\begin{remark}\label{rem:exact-double-Courant-algbd.1}
As in the geometric situation described by \v Severa~\cite{Sev00}, one can classify exact double Courant algebroids with a fixed underlying \emph{smooth} $R$-algebra.
In addition to Theorems~\ref{thm:standard-DCA} and~\ref{thm:twist-CD-algebra-double}, this requires explicit formulae for the differential calculus of~\secref{sec:diff-calculus} for double derivations over smooth algebras. Details about this calculus and the corresponding classification will appear in a future article.
\end{remark}

\section{The equivalence theorem}
\label{sec:equivalence}

In this section we prove a noncommutative version of \cite[Proposition 4.3]{Hel09} in the case of Courant algebroids:

\begin{theorem}\label{theorem-CD-DPVA}
There exists a one-to-one correspondence between graded double Poisson vertex algebras 
of degree $-1$, freely generated in weights 0 and 1, and double Courant--Dorfman algebras.
\end{theorem}

In Section \secref{V-CD}, we construct a double Courant--Dorfman algebra associated with a double Poisson vertex algebra.
In Section \secref{sec:DCD-DPVA}, we provide the inverse construction. 

\subsection{From double Poisson vertex algebras to double Courant--Dorfman algebras}
\label{V-CD}
\allowdisplaybreaks

We start showing that a graded double Poisson vertex algebra $(\mathcal{V},\partial, \lr{-{}_\lambda-})$ of degree $-1$ (see Definition~\ref{graded-DPVA}) determines canonically a double Courant--Dorfman algebra $(A,E,\ii{-,-},\partial,\cc{-,-})$ (see Definition \ref{CD-Definition}).
Firstly, the fact that $\mathcal{V}$ is $\NN$-graded as an algebra means the multiplication $\mathcal{V}\otimes\mathcal{V}\to\mathcal{V}$ has weight $0$, so it induces a structure of associative (not necessarily commutative) algebra on $A\defeq\mathcal{V}_0$ and a structure of $A$-bimodule on $E:=\mathcal{V}_1$. 
To analyze the remaining algebraic structures of $\mathcal{V}$, given by a derivation $\partial$ and a double $\lambda$-bracket $\lr{-{}_\lambda-}$, from now on we fix $a,b\in A$ and $e,f,g\in E$.
These algebraic structures have the following restrictions to $A$ and $E$:
\begin{enumerate}[label=(\roman{*}), ref=(\roman{*})]
\item 
\label{item:lambda2} 
The derivation $\partial\colon\mathcal{V}\to\mathcal{V}$ has weight $1$, so it restricts to a derivation of $A$ into $E$:
\begin{equation}\label{eq:item:lambda2} 
\partial\colon A\lto E.
\end{equation}
\item 
As the double $\lambda$-bracket has weight $-1$, its restriction to $E\otimes E$ admits an expansion
 \begin{equation}\label{eq:ansatz}
\lr{e{}_\lambda f}=\cc{e,f}+\lambda\ii{e,f},
\end{equation}
for unique linear maps
\begin{subequations}\label{eq:ansatz.2}
\begin{align}\label{eq:ansatz.2.a}
\cc{-,-}\,\colon E\otimes E &\lto E\otimes A\oplus A\otimes E,
\\
\label{eq:ansatz.2.b}
\ii{-,-}\,\colon E\otimes E &\lto A\otimes A.
\end{align}
\end{subequations}
This follows simply because $((\mathcal{V}\otimes\mathcal{V})[\lambda])_1=E\otimes A\oplus A\otimes E\oplus \lambda(A\otimes A)$.
\item 
\label{item:lambda3} 
Since $\lr{-{}_{\lambda}-}$, $e$ and $a$ have weights $-1$, $1$ and $0$, respectively, $\lr{-{}_\lambda-}$ restricts to 
\begin{subequations}\label{eq:bracket-A-f-A}\begin{align}\label{bracket-A-f}&
E\otimes A\lto A\otimes A,\quad  e\otimes a\longmapsto\lr{e{}_\lambda a},
\\&
\label{lambda-bracket-f-A}
A\otimes E\lto A\otimes A,\quad  e\otimes a\longmapsto \lr{a{}_\lambda e}.
\end{align}\end{subequations}
\item 
\label{item:lambda5} 
Since $\lr{a{}_\lambda b}$ has weight $-1$ and $\mathcal{V}_{k}=0$ for $k<0$, 
\begin{equation}\label{bracket-f-g}
\lr{a{}_\lambda b}=0.
\end{equation}
\end{enumerate}

We will show now that the axioms of double Poisson vertex algebras (Definition \ref{DPVA}) applied to the triple $(\mathcal{V},\partial, \lr{-{}_\lambda-})$ imply the axioms of double Courant--Dorfman algebras for the 5-tuple $(A,E,\ii{-,-},\partial,\cc{-,-})$ (Definition~\ref{CD-Definition}).

We start with~\eqref{vertex-skew} applied to elements of $E$, namely
\begin{equation}\label{eq:skew-inicial-5.3a}
\lr{e{}_\lambda f}=-\lr{f{}_{-\lambda-\partial}e}^{\sigma}.
\end{equation}
By~\eqref{eq:ansatz}, the right-hand side of this identity is
\begin{equation}
-\lr{f{}_{-\lambda-\partial}e}^{\sigma}=-\cc{f,e}^\sigma+\lambda\ii{f,e}^\sigma+\big(\partial\ii{f,e}\big)^\sigma.
\label{eq:skew-inicial-5.3a-RHS}
\end{equation}
Identifying the coefficients of $\lambda$ appearing in~\eqref{eq:skew-inicial-5.3a} (using~\eqref{eq:ansatz} and~\eqref{eq:skew-inicial-5.3a-RHS}), we obtain
\begin{equation}
\ii{e,f}=\ii{f,e}^\sigma,
\label{eq:ii-symmetric}
\end{equation}
that is, the pairing $\ii{-,-}$ is symmetric. 
Since the derivation $\partial$ acts on tensor products by the Leibniz rule (see \eqref{eq:partialLeibniz-rule}), \eqref{eq:ii-symmetric} implies
\begin{equation}
\partial \ii{e,f}=\big(\partial \ii{f,e}\big)^\sigma,
\label{eq:ii-symmetric-con-partial}
\end{equation}
so~\eqref{eq:CD.a} follows by identifying the terms without $\lambda$ in~\eqref{eq:skew-inicial-5.3a} (again using~\eqref{eq:ansatz} and~\eqref{eq:skew-inicial-5.3a-RHS}). 

Next, we start with the sesquilinearity conditions~\eqref{eq:sesquilinearity-vertex} of the double $\lambda$-bracket, that is,
\begin{subequations}
\begin{align}\label{eq:sesquilin-5.3b-1}
\cc{\partial a,e}+\lambda\ii{\partial a,e}
&=\lr{\partial a{}_\lambda e}
=-\lambda\lr{a{}_\lambda e},
\\
\cc{e,\partial a}+\lambda\ii{e,\partial a}
&=\lr{e{}_\lambda\partial a}
=(\lambda+\partial)\lr{e{}_\lambda a},
\end{align}\end{subequations}
where the left-hand identities follow from~\eqref{eq:ansatz}.
Since $\lr{a{}_\lambda e},\lr{e{}_\lambda a}$ do not depend on $\lambda$ by~\eqref{eq:bracket-A-f-A}, equating coefficients of these polynomial equations in the variable $\lambda$, we obtain 
\begin{subequations}\label{eq:Axiomas}
\begin{align}\label{Axioma5}
\cc{\partial a,e}&=0,
\\\label{eq:Axiomas.b}
\ii{\partial a,e}&=-\lr{a{}_\lambda e},
\\\label{eq:Axiomas.c}
\cc{e,\partial a}&=\partial\lr{e{}_\lambda a},
\\\label{eq:Axiomas.d}
\ii{e,\partial a}&=\lr{e{}_\lambda a}.
\end{align}\end{subequations}
In particular,~\eqref{Axioma5} is \eqref{eq:CD.b}.
Observe also that~\eqref{eq:Axiomas.b} and~\eqref{eq:Axiomas.d} give closed formulas for~\eqref{eq:bracket-A-f-A} in terms of $\partial$ and $\ii{-,-}$.

To prove \eqref{eq:CD.c}, we apply again the sesquilinearity conditions~\eqref{eq:sesquilinearity-vertex}. By~\eqref{bracket-f-g}, they give
\[
\lr{\partial a{}_\lambda b}=0=\lr{a{}_\lambda\partial b}.
\]
Hence, setting $e=\partial b$ in~\eqref{eq:sesquilin-5.3b-1}, we obtain
\[
\cc{\partial a,\partial b}+\lambda\ii{\partial a,\partial b}
=\lr{\partial a{}_\lambda\partial b}
=-\lambda\lr{a{}_\lambda\partial b}
=0,
\]
and so equating coefficients of this polynomial equation, we obtain \eqref{eq:CD.c} and $\cc{\partial a,\partial b}=0$.

To establish the Leibniz identity \eqref{eq:CD.d}, we start with the corresponding Leibniz identity
\begin{equation}
\lr{e{}_\lambda fa}=\lr{e{}_\lambda f}a+f\lr{e{}_\lambda a}
\label{eq:Leibniz-5.3-d-1}
\end{equation}
for the $\lambda$-bracket (see~\eqref{eq:Leibniz-vertex.a}). By \eqref{eq:ansatz} and~\eqref{eq:Axiomas.d}, the left and the right hand sides are
\begin{gather*}
\lr{e{}_\lambda fa}=\cc{e,fa}+\lambda\ii{e,fa},
\\
\lr{e{}_\lambda f}a+f\lr{e{}_\lambda a}=\cc{e,f}a+\lambda \ii{e,f}a+f\ii{e,\partial a}.
\end{gather*}
Equating coefficients of the polynomial equation~\eqref{eq:Leibniz-5.3-d-1}, we obtain \eqref{eq:CD.d} and moreover
\begin{equation}\label{eq:double-bracket-linear.1}
\ii{e,fa}=\ii{e,f}a.
\end{equation}
By similar arguments, one can see that~\eqref{eq:Leibniz-vertex.a}, 
\eqref{eq:ansatz} and~\eqref{eq:Axiomas.d} imply \eqref{eq:CD.e} and moreover
\begin{equation}\label{eq:double-bracket-linear.2}
\ii{e,af}=a\ii{e,f}.
\end{equation}

We will show now that $\ii{-,-}$ is a pairing, \emph{i.e.},~\eqref{eq:pairing.1} are morphisms of $A$-bimodules. 
First,~\eqref{eq:pairing.1.a} is an $A$-bimodule morphism, by~\eqref{eq:double-bracket-linear.1} and~\eqref{eq:double-bracket-linear.2}.
To prove that~\eqref{eq:pairing.1.b} is a morphism of $A$-bimodules as well, we apply~\eqref{vertex-skew} and \eqref{eq:Leibniz-vertex.a} (with $\partial$ replaced by $-\lambda-\partial$):
\begin{equation}\label{eq:proof.pairing.1}
\lr{ae{}_{\lambda}f}=-\lr{f{}_{-\lambda-\partial}ae}^{\sigma}=-\big(a\lr{f{}_{-\lambda-\partial}e}+\lr{f{}_{-\lambda-\partial}a}e\big)^{\sigma}.
\end{equation}
By \eqref{eq:ansatz}, the left-hand side is $\cc{ae,f}+\lambda\ii{ae,f}$, while by~\eqref{eq:ansatz}, ~\eqref{eq:CD.a}, \eqref{eq:ii-symmetric}, \eqref{eq:ii-symmetric-con-partial} and~\eqref{eq:Axiomas.d}, the right-hand side is
\begin{align*}
-a*&\cc{f,e}^\sigma+\lambda\, a*\ii{f,e}^\sigma+a*\(\partial\ii{f,e}\)^\sigma-\ii{f,\partial a}^\sigma *e
\\&
=a*\cc{e,f}+\lambda\, a*\ii{e,f}-\ii{\partial a,f}*e.
\end{align*}
Hence identifying the coefficients of $\lambda$ appearing in~\eqref{eq:proof.pairing.1}, we obtain 
\begin{equation}\label{eq:proof.pairing.2}
\ii{ae,f}=a*\ii{e,f}.
\end{equation}
By similar arguments, one can show that $\ii{ea,f}=\ii{e,f}*a$.
This identity and~\eqref{eq:proof.pairing.2} mean that~\eqref{eq:pairing.1.b} is an $A$-bimodule morphism, as required, so $\ii{-,-}$ is a pairing. 
In fact, this pairing is symmetric, by~\eqref{eq:ii-symmetric}.

To conclude that $(A,E,\ii{-,-},\partial,\cc{-,-})$ is a double Courant--Dorfman algebra, it remains to prove~\eqref{eq:CD.f} and~\eqref{eq:CD.g}.
We start with the Jacobi identity~\eqref{Jacobi-vertex} applied to $e,f,g\in E$,
\begin{equation}\label{Jacobi-inicio-DPV-CD}
0=\lr{e{}_\lambda\lr{f{}_\mu g}}_L-\lr{f{}_\mu\lr{e{}_\lambda g}}_R-\lr{\lr{e{}_{\lambda} f}{}_{\lambda+\mu}g}_L,
\end{equation}
and calculate each term separately applying~\eqref{eq:extension-pairing-first-second}, \eqref{eq:ext-pairing-zero}, \eqref{eq:extension-vertex-double-bracket}, \eqref{eq:extension-CD-bracket}, \eqref{eq:partialLeibniz-rule}, \eqref{eq:decomposition-Courant-Dorfman-def}, \eqref{eq:decomposition-Courant-Dorfman-def-1} and the identities~\eqref{eq:ansatz}, \eqref{bracket-f-g}, \eqref{eq:Axiomas.b}, \eqref{eq:Axiomas.d}:
\begin{subequations}\label{eq:Jacobi-CD-DPVA-general}
\begin{align}\notag
&\lr{e{}_\lambda\lr{f{}_\mu g}}_L
=\lr{e{}_{\lambda}\big(\cc{f,g}+\ii{f,g}\mu\big)}_L
\\\notag
&\quad=\lr{e{}_{\lambda}\cc{f,g}^{\prime}_l}\otimes\cc{f,g}^{\pprime}_l
+\lr{e{}_{\lambda}\cc{f,g}^{\prime}_r}\otimes\cc{f,g}^{\pprime}_r
+\lr{e{}_{\lambda}\ii{f,g}^{\prime}}\otimes \ii{f,g}^{\pprime}\,\mu
\\\notag
&\quad=\big(\cc{e,\cc{f,g}^{\prime}_l}+\ii{e,\cc{f,g}^{\prime}_l}\lambda\big)\!\otimes\!\cc{f,g}^{\pprime}_l
+\ii{e,\partial \cc{f,g}^{\prime}_r}\!\otimes\!\cc{f,g}^{\pprime}_r
+\ii{e,\partial \ii{f,g}^{\prime}}\!\otimes\!\ii{f,g}^{\pprime}\mu
\\\notag
&\quad=\cc{e,\cc{f,g}_l}_L+\cc{e,\cc{f,g}_r}_L+\ii{e,\cc{f,g}}_L\,\lambda+\ii{e,\partial\ii{f,g}}_L\,\mu
\\\label{eq:Jacobi-CD-DPVA-general-1}
&\quad=\cc{e,\cc{f,g}}_L+\ii{e,\cc{f,g}}_L\,\lambda+\ii{e,\partial\ii{f,g}}_L\,\mu,
\\\notag
&\lr{f{}_\mu\lr{e{}_\lambda g}}_R
=\lr{f{}_\mu\big(\cc{e,g}+\ii{e,g}\lambda\big)}_R
\\\notag
&\quad= \cc{e,g}^{\prime}_l\otimes \lr{f{}_\mu\cc{e,g}^{\pprime}_l}
+\cc{e,g}^{\prime}_r\otimes \lr{f{}_\mu\cc{e,g}^{\pprime}_r}
+ \ii{e,g}^{\prime}\otimes\lr{f{}_\mu\ii{e,g}^{\pprime}}\,\lambda
\\\notag
&\quad=\cc{e,g}^{\prime}_l\!\otimes\! \ii{f,\partial\cc{e,g}^{\pprime}_l}
+\cc{e,g}^{\prime}_r\!\otimes\!\big(\cc{f,\cc{e,g}^{\pprime}_r}+\ii{f,\cc{e,g}^{\pprime}_r}\mu\big)+\ii{e,g}^{\prime}\!\otimes\!\ii{f,\partial\ii{e,g}^{\pprime}}\lambda
\\\notag
&\quad=\cc{f,\cc{e,g}_l}_R+\cc{f,\cc{e,g}_r}_R+\ii{f,\cc{e,g}}_R\,\mu+\ii{f,\partial\ii{e,g}}_R\,\lambda
\\&\label{eq:Jacobi-CD-DPVA-general-2}\quad
=\cc{f,\cc{e,g}}_R+\ii{f,\partial\ii{e,g}}_R\,\lambda+\ii{f,\cc{e,g}}_R\,\mu,
\\\notag
&\lr{\lr{e{}_{\lambda} f}{}_{\lambda+\mu}g}_L
=\lr{\big(\cc{e,f}+\ii{e,f}\lambda\big){}_{\lambda+\mu}g}_L
= \lr{\cc{e,f}^{\prime}_l{}_{\lambda+\mu+\partial}g}_{\to}\!\otimes_1\!\cc{e,f}^{\pprime}_l
\\\notag&\qquad
+\lr{\cc{e,f}^{\prime}_r{}_{\lambda+\mu+\partial}g}_{\to}\!\otimes_1\!\cc{e,f}^{\pprime}_r
+\lr{\ii{e,f}^{\prime}{}_{\lambda+\mu+\partial}g}_{\to}\!\otimes_1\!\ii{e,f}^{\pprime}\,\lambda
\\\notag
&\quad=\cc{\cc{e,f}^\prime_l,g}\otimes_1\cc{e,f}^{\pprime}_l+\ii{\cc{e,f}^\prime_l,g}\otimes_1(\lambda+\mu+\partial)\cc{e,f}^{\pprime}_l
\\\notag
&\qquad -\ii{\partial\cc{e,f}^\prime_r,g}\otimes_1\cc{e,f}^{\pprime}_r
-\ii{\partial\ii{e,f}^\prime,g}\otimes_1\ii{e,f}^{\pprime}\,\lambda
\\\notag
&\quad=\cc{\cc{e,f}^\prime_l,g}\otimes_1\cc{e,f}^{\pprime}_l+\ii{\cc{e,f}^{\prime}_l,g}\otimes_1\partial\cc{e,f}^{\pprime}_l
+\ii{\cc{e,f}_l,g}_L\,(\lambda+\mu)
\\\notag&\qquad
+\cc{\cc{e,f}_r,g}_L
-\ii{\partial\ii{e,f},g}_L\,\lambda
\\\notag&\quad
=\cc{\cc{e,f}_l,g}_L+\ii{\cc{e,f},g}_L\,(\lambda+\mu)
+\cc{\cc{e,f}_r,g}_L
-\ii{\partial\ii{e,f},g}_L\,\lambda
\\\label{eq:Jacobi-CD-DPVA-general-3}&\quad
=\cc{\cc{e,f},g}_L+(\ii{\cc{e,f},g}_L-\ii{\partial\ii{e,f},g}_L)\,\lambda
+\ii{\cc{e,f},g}_L\,\mu.
\end{align}\end{subequations}
Putting the calculations~\eqref{eq:Jacobi-CD-DPVA-general-1},~\eqref{eq:Jacobi-CD-DPVA-general-2} and~\eqref{eq:Jacobi-CD-DPVA-general-3} together into~\eqref{Jacobi-inicio-DPV-CD}, we get
\begin{subequations}
\label{eq:Jacobi-CD-DPVA-general-expression-final}
\begin{align}
0&=\lr{e{}_\lambda\lr{f{}_\mu g}}_L-\lr{f{}_\mu\lr{e{}_\lambda g}}_R-\lr{\lr{e{}_{\lambda} f}{}_{\lambda+\mu}g}_L\nonumber
\\\label{eq:Jacobi-CD-DPVA-general-expression-final.a}
&=\Big(\cc{e,\cc{f,g}}_L-\cc{\cc{e,f},g}_L-\cc{f,\cc{e,g}}_R\Big) 
\\\label{eq:Jacobi-CD-DPVA-general-expression-final.b}
&\quad+\Big(\ii{e,\partial\ii{f,g}}_L-\ii{f,\cc{e,g}}_R-\ii{\cc{e,f},g}_L\Big)\,\mu
\\\label{eq:Jacobi-CD-DPVA-general-expression-final.c}
&\quad+\Big(\ii{e,\cc{f,g}}_L-\ii{f,\partial\ii{e,g}}_R-\ii{\cc{e,f},g}_L+\ii{\partial\ii{e,f},g}_L\Big)\,\lambda . 
\end{align}
\end{subequations}
Equating coefficients of this polynomial equation,~\eqref{eq:CD.f} and~\eqref{eq:CD.g} follow from the vanishings of~\eqref{eq:Jacobi-CD-DPVA-general-expression-final.a} and~\eqref{eq:Jacobi-CD-DPVA-general-expression-final.b}, respectively. 
Note that the vanishing of \eqref{eq:Jacobi-CD-DPVA-general-expression-final.c} is equivalent to~\eqref{eq:CD-identities.1.f}, which follows from the axioms~\eqref{eq:CD} anyway, as shown in Lemma~\ref{lem:CD-identities}.

In conclusion, we have shown that $(A,E,\ii{-,-},\partial,\cc{-,-})$ is a double Courant--Dorfman algebra. 
Note that this construction does not require the condition that the double Poisson vertex algebra is freely generated in weights $0$ and $1$. This condition will appear in~\secref{sec:DCD-DPVA}.

\subsection{From double Courant--Dorfman algebras to double Poisson vertex algebras}
\label{sec:DCD-DPVA}
\allowdisplaybreaks

Conversely, given a double Courant--Dorfman algebra $(A,E,\ii{-,-},\partial,\cc{-,-})$ (see Definition \ref{CD-Definition}), we give now a canonical construction of a graded double Poisson vertex algebra $(\mathcal{V},\partial,\lr{-{}_{\lambda}-})$ of weight (or degree) $-1$, freely generated in weights $0$ and $1$.
By definition, its underlying differential graded algebra $(\mathcal{V},\partial)$ is the
graded algebra with a derivation of weight $1$, freely generated as a differential algebra in degrees $0$ and $1$, such that $\mathcal{V}_0=A$, $\mathcal{V}_1=E$ and the derivation $\partial\colon\mathcal{V}\to\mathcal{V}$ restricts in degree 0 to $\partial\colon A\to E$.
More explicitly, using the free graded $A$-algebra $(\mathcal{V}',T)$ with derivation $T$ of degree 1 generated by the $A$-bimodule $E$ in degree 1, the graded algebra 
$\mathcal{V}$ and the derivation $\partial\colon\mathcal{V}\to\mathcal{V}$ are constructed as the quotient of $\mathcal{V}'$ by the ideal generated by $(T-\partial)a$ for all $a\in A$, and the map obtained by reduction from $T$ on $\mathcal{V}'$, respectively.
We also define the operator 
\begin{equation}\label{eq:vertex-bracket.1}
\lr{-{}_{\lambda}-}\colon\,\mathcal{V}\otimes\mathcal{V}\lto (\mathcal{V}\otimes\mathcal{V})[\lambda] 
\end{equation}
as the linear map given for all $a,b\in A$ and $e,f\in E$ by the formulas
\begin{subequations}
\label{eq:vertex-bracket}
\begin{align}\label{eq:vertex-bracket.a}
\lr{e {}_\lambda f}&=\cc{e,f}+\ii{e,f}\lambda, 		
\\\label{eq:vertex-bracket.b} 
\lr{e{}_\lambda a}&=\ii{e,\partial a},    	
\\\label{eq:vertex-bracket.c}
\lr{a{}_\lambda e}&=-\ii{\partial a,e},  		
\\\label{eq:vertex-bracket.d}
\lr{a{}_\lambda b}&=0,		
\end{align}
\end{subequations}
and extended to all of $\mathcal{V}$ using the Leibniz rules \eqref{eq:Leibniz-vertex} and the sesquilinearity identities~\eqref{eq:sesquilinearity-vertex}.
%

We will now show that the triple $(\mathcal{V},\partial,\lr{-{}_\lambda-})$ is a graded double Poisson vertex algebra of degree $-1$, that is, \eqref{eq:vertex-bracket.1} satisfies the identities~\eqref{eq:sesquilinearity-vertex}, \eqref{eq:Leibniz-vertex}, \eqref{vertex-skew} and \eqref{Jacobi-vertex} and, moreover, $\partial$ and $\lr{-{}_\lambda-}$ have weights $1$ and $-1$, respectively (see Definition~\ref{graded-DPVA}).
%
The conditions about the weights of $\partial$ and $\lr{-{}_\lambda-}$ are satisfied by construction.
Furthermore, since \eqref{eq:vertex-bracket.1} is extended to all of $\mathcal{V}$ using the Leibniz rules \eqref{eq:Leibniz-vertex} and the sesquilinearity identities~\eqref{eq:sesquilinearity-vertex}, it suffices to check~\eqref{eq:sesquilinearity-vertex}, \eqref{eq:Leibniz-vertex}, \eqref{vertex-skew} and \eqref{Jacobi-vertex} on generators, that is, elements of $A\oplus E$, so from now on we fix $a,b\in A$ and $e,f,g\in E$. 

We start checking the sesquilinearity identities~\eqref{eq:sesquilinearity-vertex}.
The identity \eqref{eq:sesquilinearity-vertex.a} holds on $A\otimes A$ because the left and the right hand sides $\lr{\partial a{}_\lambda b}$ and $\lr{a{}_{\lambda}b}$ are both $0$ (by \eqref{eq:vertex-bracket.b}, \eqref{eq:vertex-bracket.d} and \eqref{eq:CD.c}),
%
and on $E\otimes A$ and $E\otimes E$, because in these cases the left-hand sides $\lr{\partial e{}_\lambda a}$ and $\lr{\partial e{}_\lambda f}$ are defined using precisely \eqref{eq:sesquilinearity-vertex.a}, and applying then~\eqref{eq:vertex-bracket.b} and~\eqref{eq:vertex-bracket.a}, respectively.
The identity \eqref{eq:sesquilinearity-vertex.b} holds on $A\otimes A$, $A\otimes E$ and $E\otimes E$ by similar reasons (using~\eqref{eq:vertex-bracket.c} rather than~\eqref{eq:vertex-bracket.b} in the first case, and applying \eqref{eq:sesquilinearity-vertex.b} rather than \eqref{eq:sesquilinearity-vertex.a} in the last two cases).
The identity \eqref{eq:sesquilinearity-vertex.a} holds on $A\otimes E$ because~\eqref{eq:vertex-bracket.a} and~\eqref{eq:CD.b} imply 
\[
\lr{\partial a{}_\lambda e}=\cc{\partial a,e}+\lambda \ii{\partial a, e}=\lambda \ii{\partial a,e}=-\lambda \lr{a {}_{\lambda} e}.
\]
Finally, the identity \eqref{eq:sesquilinearity-vertex.b} holds on $E\otimes A$ because using \eqref{eq:CD.a}, \eqref{eq:CD.b} and~\eqref{eq:ii-symmetric-con-partial}, we obtain
\[
\cc{e,\partial a}=\(\partial\ii{\partial a,e}\)^\sigma-\cc{\partial a,e}^\sigma
=\(\partial\ii{\partial a,e}\)^\sigma
=\partial\ii{e,\partial a}
\]
and so~\eqref{eq:vertex-bracket.a} and~\eqref{eq:vertex-bracket.b} imply
\[
\lr{e{}_\lambda\partial a}=\cc{e,\partial a}+\lambda\ii{\partial a,e}
=(\lambda+\partial)\ii{\partial a,e}=(\lambda+\partial)\lr{e{}_\lambda a}.
\]

We prove now the left Leibniz rule \eqref{eq:Leibniz-vertex.a}.
This rule follows trivially when the three elements are in $A$, as the left and the right hand sides of \eqref{eq:Leibniz-vertex.a} vanish in this case by~\eqref{eq:vertex-bracket.d}.
Using \eqref{eq:pairing.1.a}, \eqref{eq:vertex-bracket.b}, \eqref{eq:vertex-bracket.c}, \eqref{eq:vertex-bracket.d} and the fact that $\partial$ is a derivation, we obtain
\begin{align*}
\lr{a{}_\lambda eb}&=-\ii{\partial a,eb}=-\ii{\partial a,b}b=\lr{a{}_\lambda e}b=\lr{a{}_\lambda e}b+e\lr{a{}_\lambda b},
\\
\lr{a{}_\lambda be}&=-\ii{\partial a,be}=-b\ii{\partial a,e}=b\lr{a{}_\lambda e}=b\lr{a{}_\lambda e}+\lr{a{}_\lambda b}e,
\\
\lr{e{}_\lambda ab}&=\ii{e,\partial(ab)}=\ii{e,(\partial a)b+a(\partial b)}
=\ii{e,\partial a}b+a\ii{e,\partial b}
\\
&=\lr{e{}_\lambda a}b+a\lr{e{}_\lambda b}.
\end{align*}
Using \eqref{eq:vertex-bracket.a}, \eqref{eq:pairing.1.a}, \eqref{eq:CD.d} and \eqref{eq:vertex-bracket.b}, we also have
\begin{align*}
\lr{e{}_{\lambda}fa}
&=\cc{e,fa}+\lambda\ii{e,fa}
=\cc{e,f}a+f\ii{e,\partial a}+\lambda\ii{e,f}a
\\&
=\(\cc{e,f}+\lambda\ii{e,f}\)a+f\lr{e{}_{\lambda}a}
=\lr{e{}_{\lambda}f}a+f\lr{e{}_{\lambda}a},
\end{align*}
and a similar calculation (using \eqref{eq:CD.e}, rather than \eqref{eq:CD.d}), gives $\lr{e{}_{\lambda}af}=a\lr{e{}_{\lambda}f}+\lr{e{}_{\lambda}a}f$. 
Consequently, $\lr{-{}_{\lambda}-}$ satisfies the left Leibniz rule \eqref{eq:Leibniz-vertex.a}.
The proof of the right Leibniz rule \eqref{eq:Leibniz-vertex.b} is similar.
Alternatively, as explained after Definition~\ref{DPVA}, it follows from the left Leibniz rule and the skewsymmetry identity \eqref{vertex-skew}, that we prove now.

The skewsymmetry identity \eqref{vertex-skew} follows because
$\ii{-,-}$ is symmetric, and $\lr{e{}_{\lambda}a}$ and $\lr{a{}_{\lambda}e}$ have degree $0$ as polynomials in $\lambda$, so \eqref{eq:vertex-bracket.b} and \eqref{eq:vertex-bracket.c} imply
\begin{align*}
\lr{e{}_{\lambda}a}&=\ii{e,\partial a}=\ii{\partial a,e}^{\sigma}=-\lr{a{}_{-\lambda-\partial}e}^{\sigma},
\\
\lr{a{}_{\lambda}e}&=-\ii{\partial a,e}=-\ii{e,\partial a}^\sigma=-\lr{e{}_{-\lambda-\partial}a}^{\sigma},
\end{align*}
while~\eqref{eq:CD.a},~\eqref{eq:ii-symmetric-con-partial} and~\eqref{eq:vertex-bracket.a} imply 
\begin{align*}
\lr{e{}_{\lambda}f}&=\cc{e,f}+\lambda\ii{e,f}
=-\cc{f,e}^{\sigma}+\lambda\ii{e,f}+\partial\ii{e,f}
\\&
=-\big(\cc{f,e}-(\lambda+\partial)\ii{f,e}\big)^{\sigma}
=-\lr{f{}_{-\lambda-\partial}e}^{\sigma}.
\end{align*}

To conclude that $(\mathcal{V},\partial,\lr{-{}_\lambda-})$ is a double Poisson vertex algebra, it remains to prove the Jacobi identity \eqref{Jacobi-vertex} on generators of $\mathcal{V}$. 
This follows trivially when two or three elements in \eqref{Jacobi-vertex} belong to $A$, by \eqref{eq:vertex-bracket.d}.
To prove the remaining Jacobi identities
\begin{subequations}\label{eq:DCD-DPVA.Jacobi.1}
\begin{align}\label{eq:DCD-DPVA.Jacobi.1.a}
\lr{e{}_{\lambda}\lr{f{}_{\mu}a}}_L&=\lr{f{}_{\mu}\lr{e{}_{\lambda}a}}_R+\lr{\lr{e{}_{\lambda}f}{}_{\lambda+\mu}a}_L,
\\\label{eq:DCD-DPVA.Jacobi.1.b}
\lr{e{}_{\lambda}\lr{a{}_{\mu}f}}_L&=\lr{a{}_{\mu}\lr{e{}_{\lambda}f}}_R+\lr{\lr{e{}_{\lambda}a}{}_{\lambda+\mu}f}_L,
\\\label{eq:DCD-DPVA.Jacobi.1.c}
\lr{a{}_{\lambda}\lr{e{}_{\mu}f}}_L&=\lr{e{}_{\mu}\lr{a{}_{\lambda}f}}_R+\lr{\lr{a{}_{\lambda}e}{}_{\lambda+\mu}f}_L,
\\\label{eq:DCD-DPVA.Jacobi.1.d}
\lr{e{}_\lambda \lr{f{}_\mu g}}_L&=\lr{f{}_\mu\lr{e{}_\lambda g}}_R+\lr{\lr{e{}_\lambda f}{}_{\lambda+\mu}g}_L,
\end{align}
\end{subequations}
we calculate each term separately.
Regarding~\eqref{eq:DCD-DPVA.Jacobi.1.a}, using \eqref{eq:extension-pairing-first-second}, \eqref{eq:ext-pairing-zero}, \eqref{eq:extension-vertex-double-bracket}, \eqref{eq:decomposition-Courant-Dorfman-def}, \eqref{eq:decomposition-Courant-Dorfman-def-1}, \eqref{eq:vertex-bracket.a}, \eqref{eq:vertex-bracket.b} and \eqref{eq:vertex-bracket.d}, it is straightforward to see that 
\begin{align*}
\lr{e{}_{\lambda}\lr{f{}_{\mu}a}}_L&=
\lr{e{}_{\lambda}\ii{f,\partial a}}_L
=\lr{e{}_{\lambda}\ii{f,\partial a}^{\prime}}\otimes \ii{f,\partial a}^{\pprime}
=\ii{e,\partial\ii{f,\partial a}}_L,
\\
\lr{f{}_{\mu}\lr{e{}_{\lambda}a}}_R&=
\lr{f{}_{\mu}\ii{e,\partial a}}_R
=\ii{e,\partial a}^{\prime}\otimes\lr{f{}_{\mu}\ii{e,\partial a}^{\pprime}}
=\ii{f,\partial\ii{e,\partial a}}_R,
\\
\lr{\lr{e{}_{\lambda}f}{}_{\lambda+\mu}a}_L&=
\lr{\big(\cc{e,f}+\ii{e,f}\lambda\big){}_{\lambda+\mu}a}_L
=\lr{\cc{e,f}^{\prime}_l{}_{\lambda+\mu+\partial}a}_{\to}\otimes_1\cc{e,f}^{\pprime}_l
\\&
=\ii{\cc{e,f}^{\prime}_l,\partial a}\otimes_1\cc{e,f}^{\pprime}_l
=\ii{\cc{e,f}_l,\partial a}_L=\ii{\cc{e,f},\partial a}_L,
\end{align*}
so~\eqref{eq:DCD-DPVA.Jacobi.1.a} follows from~\eqref{eq:CD.g} (with $g=\partial a$) and~\eqref{eq:CD-identities.1.a}.
The identity~\eqref{eq:DCD-DPVA.Jacobi.1.b} follows similarly from~\eqref{eq:CD.g} (with $f$ and $g$ replaced by $\partial a$ and $f$, respectively), and also using~\eqref{eq:CD-identities.1.a} and \eqref{eq:vertex-bracket.c}. 
To prove~\eqref{eq:DCD-DPVA.Jacobi.1.c}, we apply \eqref{eq:extension-pairing-first-second}, \eqref{eq:ext-pairing-zero}, \eqref{eq:extension-vertex-double-bracket}, \eqref{eq:decomposition-Courant-Dorfman-def}, \eqref{eq:decomposition-Courant-Dorfman-def-1}, \eqref{eq:CD-identities.1.b}, \eqref{eq:vertex-bracket}, obtaining
\begin{align*}
\lr{a{}_{\lambda}\lr{e{}_{\mu}f}}_L&=\lr{a{}_{\lambda}\big(\cc{e,f}+\ii{e,f}\mu\big)}_L
=\lr{a{}_{\lambda}\cc{e,f}^{\prime}_l}\otimes\cc{e,f}^{\pprime}_l
\\&
=-\ii{\partial a,\cc{e,f}_l^{\prime}}\otimes\cc{e,f}^{\pprime}_l
=-\ii{\partial a,\cc{e,f}_l}_L=-\ii{\partial a,\cc{e,f}}_L,
\\
\lr{e{}_{\mu}\lr{a{}_{\lambda}f}}_R&=-\lr{e{}_{\mu}\ii{\partial a,f}}_R
=-\ii{\partial a,f}^{\prime}\otimes\lr{e{}_{\mu}\ii{\partial a,f}^{\pprime}}
\\&
=-\ii{\partial a,f}^{\prime}\otimes\ii{e,\partial\ii{\partial a,f}^{\pprime}}
=-\ii{e,\partial\ii{\partial a,f}}_R,
\\
\lr{\lr{a{}_{\lambda}e}{}_{\lambda+\mu}f}_L&=-\lr{\ii{\partial a,e}{}_{\lambda+\mu}f}_L
=-\lr{\ii{\partial a,e}^{\prime}{}_{\lambda+\mu+\partial}f}_{\to}\otimes_1\ii{\partial a,e}^{\pprime}
\\&
=\ii{\partial\ii{\partial a,e}^{\prime},f}\otimes_1 \ii{\partial a,e}^{\pprime}
=\ii{\partial a,e}^{\pprime}\otimes_1\ii{\partial\ii{\partial a,e}^{\prime},f}
\\&
=\ii{\(\partial\ii{\partial a,e}\)^\sigma,f}_R
=\ii{\cc{e,\partial a},f}_R.
\end{align*}
Therefore~\eqref{eq:DCD-DPVA.Jacobi.1.c} is equivalent to the following identity, which follows from~\eqref{eq:CD-identities.1.f}:
\[
\ii{e,\partial\ii{\partial a,f}}_R=\ii{\cc{e,\partial a},f}_R+\ii{\partial a,\cc{e,f}}_L.
\]
To prove~\eqref{eq:DCD-DPVA.Jacobi.1.d}, we calculate each term using
\eqref{eq:extension-pairing-first-second}, 
\eqref{eq:ext-pairing-zero}, 
\eqref{eq:extension-vertex-double-bracket},
\eqref{eq:partialLeibniz-rule}, 
\eqref{eq:decomposition-Courant-Dorfman-def}, 
\eqref{eq:decomposition-Courant-Dorfman-def-1} 
and
\eqref{eq:vertex-bracket}:
\allowdisplaybreaks
\begin{subequations}\label{eq:Jacobi-CD-DPVA-general-bis}
\begin{align}\notag
&\lr{e{}_\lambda\lr{f{}_\mu g}}_L
=\lr{e{}_{\lambda}\big(\cc{f,g}+\ii{f,g}\mu\big)}_L
\\\notag
&\quad
=\lr{e{}_{\lambda}\cc{f,g}^{\prime}_l}\otimes \cc{f,g}^{\pprime}_l
+\lr{e{}_{\lambda}\cc{f,g}^{\prime}_r}\otimes \cc{f,g}^{\pprime}_r
+\lr{e{}_{\lambda}\ii{f,g}^{\prime}}\otimes \ii{f,g}^{\pprime}\mu
\\\notag
&\quad=\big(\cc{e,\cc{f,g}^{\prime}_l}+\ii{e,\cc{f,g}^{\prime}_l}\lambda\big)\!\otimes \!\cc{f,g}^{\pprime}_l
+\ii{e,\partial \cc{f,g}^{\prime}_r}\!\otimes\! \cc{f,g}^{\pprime}_r
+\ii{e,\partial \ii{f,g}^{\prime}}\!\otimes\! \ii{f,g}^{\pprime}\mu
\\\notag
&\quad=\cc{e,\cc{f,g}_l}_L+\ii{e,\cc{f,g}_l}_L\lambda
+\cc{e,\cc{f,g}_r}_L+\ii{e,\partial\ii{f,g}}_L\mu
\\\label{eq:Jacobi-CD-DPVA-general-bis-1}
&\quad=\cc{e,\cc{f,g}}_L+\ii{e,\cc{f,g}}_L\,\lambda+\ii{e,\partial\ii{f,g}}_L\,\mu,
\\\notag
&\lr{f{}_\mu\lr{e{}_\lambda g}}_R
=\lr{f{}_\mu\big(\cc{e,g}+\ii{e,g}\lambda}_R
\\\notag
&\quad=\cc{e,g}^{\prime}_l\otimes \lr{f{}_\mu\cc{e,g}^{\pprime}_l}
+\cc{e,g}^{\prime}_r\otimes \lr{f{}_\mu\cc{e,g}^{\pprime}_r}
+ \ii{e,g}^{\prime}\otimes\lr{f{}_\mu\ii{e,g}^{\pprime}} \lambda
\\\notag
&\quad=\cc{e,g}^{\prime}_l\!\otimes \!\ii{f,\partial \cc{e,g}^{\pprime}_l}
+\cc{e,g}^{\prime}_r\!\otimes\!(\cc{f,\cc{e,g}^{\pprime}_r}+\ii{f,\cc{e,g}^{\pprime}_r}\mu)
+\ii{e,g}^{\prime}\!\otimes\!\ii{f,\partial \ii{e,g}^{\pprime}}\lambda
\\\label{eq:Jacobi-CD-DPVA-general-bis-2}
&\quad=\cc{f,\cc{e,g}}_R+\ii{f,\partial\ii{e,g}}_R\,\lambda+\ii{f,\cc{e,g}}_R\,\mu, 
\\\notag
&\lr{\lr{e{}_{\lambda} f}{}_{\lambda+\mu}g}_L
=\lr{\big(\cc{e,f}+\ii{e,f}\lambda\big){}_{\lambda+\mu}g}_L
\\\notag
&\quad= \lr{\cc{e,f}^\prime_l{}_{\lambda+\mu+\partial}g}_{\to}\otimes_1\cc{e,f}^{\pprime}_l
+\lr{\cc{e,f}^{\prime}_r{}_{\lambda+\mu+\partial}g}_{\to}\otimes_1\cc{e,f}^{\pprime}_r
\\\notag &\qquad 
+\lr{\ii{e,f}^{\prime}{}_{\lambda+\mu+\partial}g}_{\to}\otimes_1\ii{e,f}^{\pprime}\lambda
\\\notag
&\quad=\cc{\cc{e,f}^\prime_l,g}\otimes_1\cc{e,f}^{\pprime}_l
+\ii{\cc{e,f}^\prime_l,g}\otimes_1 (\lambda+\mu+\partial)\cc{e,f}^{\pprime}_l
\\\notag
&\qquad -\ii{\partial\cc{e,f}^{\prime}_r,g}\otimes_1\cc{e,f}^{\pprime}_r
-\ii{\partial\ii{e,f}^{\prime},g}\otimes_1\ii{e,f}^{\pprime}\lambda
\\\notag
&\quad=\cc{\cc{e,f}_l,g}_L
+\ii{\cc{e,f}^\prime_l,g}\otimes_1\cc{e,f}^{\pprime}_l(\lambda+\mu)
+\cc{\cc{e,f}_r,g}_L
-\ii{\partial\ii{e,f},g}_L\lambda
\\\label{eq:Jacobi-CD-DPVA-general-bis-3}
&\quad=\cc{\cc{e,f},g}_L
+\(\ii{\cc{e,f},g}_L-\ii{\partial\ii{e,f},g}_L\)\,\lambda+\ii{\cc{e,f},g}_L\,\mu.
\end{align}\end{subequations}
Since these terms are polynomials in the variables $\lambda$ and $\mu$,~\eqref{eq:DCD-DPVA.Jacobi.1.d} is equivalent to three equations, obtained equating coefficients of this polynomial equation.
Putting together the terms~\eqref{eq:Jacobi-CD-DPVA-general-bis} into~\eqref{eq:DCD-DPVA.Jacobi.1.d} (as in~\eqref{eq:Jacobi-CD-DPVA-general-expression-final}), we see that these equations are precisely~\eqref{eq:CD.f}, 
\eqref{eq:CD-identities.1.f} 
and \eqref{eq:CD.g}, 
and so~\eqref{eq:DCD-DPVA.Jacobi.1.d} follows.

To sum up, given a double Courant--Dorfman algebra, in this subsection we proved that it gives rise via \eqref{eq:vertex-bracket} to a graded double Poisson vertex algebra of weight $-1$, freely generated in weights 0 and 1. Consequently, Theorem \ref{theorem-CD-DPVA} follows.

\section{The Kontsevich--Rosenberg principle}
\label{sec:KR-double-CD}

\subsection{The Kontsevich--Rosenberg principle for double Poisson (vertex) algebras}
\label{sec:KR-DPVA-SECTION}

The Kontsevich--Rosenberg principle \cite{Kon94a,KR00} is a guiding rule used to find noncommutative versions of standard geometric notions. 
It states that a noncommutative geometric structure on a finitely generated associative algebra $A$ should naturally induce the corresponding geometric structure on the representation schemes.
To construct these schemes, throughout~\secref{sec:KR-double-CD}, we fix an $R$-algebra $A$, where the algebra $R$ is semi-simple, given by
\[
R=\bigoplus_{\nu\in I}\kk\nu,
\]
for a complete set of orthogonal idempotents $\nu$ in a finite set $I\subset R$, fix a `dimension vector' $N=(N_\nu)_{\nu\in I}\in\NN^I$, and define $\lvert N\rvert\defeq\sum N_\nu$. 
For each commutative algebra $\cO$, we define $\MM_N(\cO)$ as the $R$-algebra
with underlying algebra $\MM_{\lvert N\rvert\times\lvert N\rvert}(\cO)$ of $\cO$-valued $\lvert N\rvert\times \lvert N\rvert$-matrices, with the $R$-algebra structure homomorphism $R\to\MM_{\lvert N\rvert\times\lvert N\rvert}(\cO)$ mapping $\nu\in I$ to the identity matrix $\mathrm{I}^\nu$ in $\MM_{N_\nu\times N_\nu}(\cO)\subset\MM_{\lvert N\rvert\times\lvert N\rvert}(\cO)$ (here we view an element $x$ of $\MM_{\lvert N\rvert\times\lvert N\rvert}(\cO)$ as a block matrix $(x_{\nu\nu'})_{\nu,\nu'\in I}$ where $x_{\nu\nu'}\in\MM_{N_\nu\times N_{\nu'}}(\cO)$). 
Then the representation scheme $\Rep(A,N)=\Spec A_N$ parametrizes $R$-linear representations of $A$ with dimension vector $N$, that is, the commutative algebra $A_N$ represents the functor
\[
\Rep_N(A)\colon\mathbf{CAlg}\lto\mathbf{Set},\quad \cO\longmapsto \Hom_{R}\!\big(A,\MM_N(\cO)\big),
\]
mapping a (finitely generated) commutative algebra $\cO$ to the set of $R$-algebra homomorphisms from $A$ to $\MM_N(\cO)$ (throughout \secref{sec:KR-double-CD}, all occurring algebras will be finitely generated over the base field $\kk$). The representing property of $A_N$ means there is an adjunction
\begin{equation}\label{eq:KR.adjunction-A_N}
\Hom(A_N,\cO)\cong\Hom_R\!\big(A,\MM_N(\cO)\big),
\end{equation}
therefore $A_N$ can be constructed as the commutative algebra on the set of symbols $\{a_{ij}\mid {a\in A},\; {1\leq i,j\leq\lvert N\rvert}\}$, subject to the relations (see~\cite[p. 5751]{VdB08} or~\cite[\S 4]{CB11})
\begin{equation}\label{eq:KR-relations-defining-AV}
(za)_{ij}=za_{ij},\quad (a+b)_{ij}=a_{ij}+b_{ij},\quad (ab)_{ij}=a_{ik}b_{kj},\quad \nu_{ij}=\mathrm{I}^\nu_{ij},
\end{equation}
for all $a,b\in A$, $\nu\in I$, $z\in\kk$ and $i,j=1,\dots,\lvert N\rvert$ (here and below we sum over repeated indices).
One can show that our standing assumption that $A$ is finitely generated over $\kk$ implies that the commutative algebra $A_N$ is also finitely generated over $\kk$ (cf.~\cite[\S 3]{LeBruyn-VandeWeyer.2002}).

As shown by Van den Bergh~\cite[Proposition 7.5.2]{VdB08}, double Poisson algebras satisfy the Kontsevich--Rosenberg principle. More precisely, for any $R$-linear double Poisson algebra $(A,\lr{-,-})$ (see Definition \ref{def:double-Poisson algebra}) and $N\in\NN^I$, the commutative algebra $A_N$ is Poisson, with Poisson bracket $\{-,-\}$  given on generators by 
\begin{equation}\label{eq:KR-double-Poisson-algebras}
\{a_{ij},b_{uv}\}=\lr{a,b}^{\prime}_{uj}\lr{a,b}^{\pprime}_{iv},
\end{equation}
for $a,b\in A$, $i,j,u,v=1,\dots,\lvert N\rvert$. 
The arrangement of indices in \eqref{eq:KR-double-Poisson-algebras}, called the \emph{standard index convention}, will repeatedly appear below.

When an algebra $\mathcal{V}$ is equipped with a derivation $\partial\colon\mathcal{V}\to\mathcal{V}$,
the commutative algebra $\mathcal{V}_N$ (defined as above) has a derivation $\partial_N$ given on generators by (see, e.g.,~\cite[p. 5713]{VdB08})
\begin{equation}
\partial_N(a_{ij})\defeq(\partial a)_{ij}.
\label{eq:KR-derivation-partial}
\end{equation}
One of the main results in \cite{DSKV15} is the following theorem, whereby double Poisson vertex algebras satisfy the Kontsevich--Rosenberg principle.

\begin{theorem}[{\cite[Proposition 3.20 \and Theorem 3.22]{DSKV15}}]
\label{thm:KR-double-Poisson-vertex-algebras}
Let $\mathcal{V}$ be a differential algebra endowed with a double $\lambda$-bracket $\lr{-{}_\lambda-}$, written as $\lr{a{}_\lambda b}=\sum_{n\in\mathbb{N}}\big((a_{{n}}b)^\prime\otimes (a_{{n}}b)^{\prime\prime}\big)\lambda^{{n}}$, for all $a,b\in\mathcal{V}$. Then there exists a well-defined $\lambda$-bracket on $\mathcal{V}_N$, given by 
\begin{equation}
\{a_{ij}{}_\lambda b_{uv}\}=\sum_{{n}\in\mathbb{N}}(a_{{n}}b)^\prime_{uj}(a_{{n}}b)^{\prime\prime}_{iv}\lambda^{{n}},
\label{lambda-bracket-comm}
\end{equation}
where $a_{ij},b_{uv}\in\mathcal{V}_N$.
In addition, if $\mathcal{V}$ is a double Poisson vertex algebra, then $\mathcal{V}_N$ is a Poisson vertex algebra with $\lambda$-bracket defined by \eqref{lambda-bracket-comm}.
\end{theorem}

\subsection{The Kontsevich--Rosenberg principle for double Courant--Dorfman algebras}
\label{sec:KR-double-CD-section}
\allowdisplaybreaks

The \emph{Van den Bergh functor}~\cite[\S 3.3]{VdB08a} (see also~\cite[\S 6.3]{CBEG07}) is the additive functor 
\begin{equation}\label{eq:VdB-functor-def}
(-)_N\colon \mathbf{bimod}(A)\lto\mathbf{mod}(A_N)
\end{equation}
that maps a finitely generated $A$-bimodule $E$ to the finitely generated $A_N$-module $E_N$ generated by a set of symbols $\{e_{ij}\mid {e\in E},\, {1\leq i,j\leq \lvert N\rvert}\}$ that satisfy
\begin{subequations}
\label{eq:KR-relaciones-bimodules}
\begin{align}\label{eq:KR-relaciones-bimodules.a}
(e+f)_{ij}&=e_{ij}+f_{ij},
\\
\label{eq:KR-relaciones-bimodules.b}
(ae)_{ij}&=a_{ik}e_{kj},
\\\label{eq:KR-relaciones-bimodules.c}
(ea)_{ij}&=a_{kj}e_{ik},
\end{align}
\end{subequations}
for all $a\in A$, $e,f\in E$ and $i,j=1,\dots,\lvert N\rvert$.
More intrinsically, 
\[
E_N=E\otimes_{A^{\e}}\MM_N(A_N),
\]
where we use the $A$-bimodule structure on $\MM_N(A_N)$ obtained via the universal $R$-algebra morphism $A\to \MM_N(A_N)$, $a\mapsto(a_{ij})$ (given by the counit of the adjunction~\eqref{eq:KR.adjunction-A_N}), and the $A_N$-module structure obtained via the diagonal embedding $A_N\to \MM_N(A_N)$ (see~\cite[Lemma 3.3.1]{VdB08a} for details).

Van den Bergh~\cite[Lemma 3.3.7 and Proposition 6.1]{VdB08a} used this functor to prove that appropriate noncommutative structures on $A$-bimodules, such as double Lie algebroids 
and quasi-bisymplectic structures, 
satisfy the Kontsevich--Rosenberg principle. 
Here we use it to prove that double Courant--Dorfman algebras satisfy the Kontsevich--Rosenberg principle (see Definition \ref{CD-Definition}).


\begin{theorem}\label{thm:KR-double-Courant-Dorfman}
Let $(A,E,\ii{-,-},\partial,\cc{-,-})$ be an $R$-linear double Courant--Dorfman algebra.
For all $N\in\NN^I$, we define
\begin{subequations}\label{eq:KR-CD-thm}
\begin{align}\label{eq:KR-CD-thm.a}
\partial_N a_{ij}&\defeq (\partial a)_{ij},
\\\label{eq:KR-CD-thm.b}
\langle e_{ij},f_{uv}\rangle&\defeq \ii{e,f}^{\prime}_{uj}\ii{e,f}^{\pprime}_{iv},
\\\label{eq:KR-CD-thm.c}
[e_{ij},f_{uv}]&\defeq \cc{e,f}^{\prime}_{l,uj}  \cc{e,f}^{\pprime}_{l,iv}+  \cc{e,f}^{\prime}_{r,uj}  \cc{e,f}^{\pprime}_{r,iv},
\end{align}\end{subequations}
for all $a\in A$ and $e,f\in E$.
Then the 5-tuple $(A_N,E_N,\langle-,-\rangle,\partial_N, [-,-])$ is a Courant--Dorfman algebra in the sense of Definition \ref{def:CD-algebra-def-comm}.
\end{theorem}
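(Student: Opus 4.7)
The proof will be a careful translation through the standard index convention of \eqref{eq:KR-double-Poisson-algebras} and \eqref{lambda-bracket-comm}. The plan is first to check that the three operations in \eqref{eq:KR-CD-thm} are well-defined on $A_N$ and $E_N$, and then to verify each of the six axioms \eqref{eq:CD-comm.a}--\eqref{eq:CD-comm.f} by identifying it with the index-reduction of one of the noncommutative axioms \eqref{eq:CD.a}--\eqref{eq:CD.g}, in the spirit of Van den Bergh's proof of \cite[Proposition 7.5.2]{VdB08} and of Proposition \ref{prop:KR-double-Poisson-vertex-algebras}.

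For well-definedness, the derivation $\partial_N$ is forced by $(\partial(ab))_{ij} = (\partial a)_{it}b_{tj} + a_{it}(\partial b)_{tj}$, which follows from $\partial$ being a derivation together with \eqref{eq:KR-relaciones-bimodules.a}--\eqref{eq:KR-relaciones-bimodules.b}; the symmetric pairing $\langle -,- \rangle$ is well-defined by Lemma \ref{lem:KR-pairings}, and its symmetry is the indexed form of \eqref{eq:Sweedler-pairing-symm}. The most delicate point is the well-definedness of $[-,-]$: one must verify that the right-hand side of \eqref{eq:KR-CD-thm.c} respects the relations \eqref{eq:KR-relaciones-bimodules}. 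For the second argument, this is immediate from \eqref{eq:CD.d}--\eqref{eq:CD.e}; for the first argument, one derives the analogous Leibniz identities from \eqref{eq:CD.a} combined with \eqref{eq:CD.d}--\eqref{eq:CD.e}, and checks that after the index substitutions the prescribed sums reassemble correctly.

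Once the structures are defined, the easier axioms come first. Axiom \eqref{eq:CD-comm.e} is immediate from \eqref{eq:CD.b} and \eqref{eq:CD-comm.f} from \eqref{eq:CD.c}, since the index-reduction of the zero tensor is zero. Axiom \eqref{eq:CD-comm.c} follows from \eqref{eq:CD.a} by applying $\partial$ through the Leibniz rule \eqref{eq:partialLeibniz-rule} and splitting both sides into their $E\otimes A$ and $A\otimes E$ components: the $E\otimes A$-part of $\partial\ii{e,f}$ matches $\cc{e,f}_l + \cc{f,e}^\sigma_r$, and the $A\otimes E$-part matches $\cc{e,f}_r + \cc{f,e}^\sigma_l$; indexing according to \eqref{eq:KR-CD-thm.b}--\eqref{eq:KR-CD-thm.c} then yields the commutative identity. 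Finally, the Leibniz rule \eqref{eq:CD-comm.a} follows from \eqref{eq:CD.d}--\eqref{eq:CD.e} together with the $A_N$-module structure of $E_N$ coming from \eqref{eq:KR-relaciones-bimodules}.

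The main obstacle will be \eqref{eq:CD-comm.b} and the Leibniz-algebra identity \eqref{eq:CD-comm.d}, which correspond to the most elaborate axioms \eqref{eq:CD.g} and \eqref{eq:CD.f}, respectively. For \eqref{eq:CD-comm.b}, the three-factor identity \eqref{eq:CD.g} in $A^{\otimes 3}$, together with the conventions \eqref{eq:extension-pairing-first-second}, must be projected via the indexed pairing and derivation so that the free indices in the three slots of $A^{\otimes 3}$ match up with the two external pairs $(i,j)$ and $(u,v)$, and the remaining indices contract. For the Jacobi identity, the expansion of $[e_{ij}, [f_{uv}, g_{pq}]]$ via \eqref{eq:KR-CD-thm.c} produces four terms, according to whether each factor of the bracket contributes its $l$- or $r$-component; grouping them appropriately, one applies the three explicit identities \eqref{eq:CD-double-Jacobi-explicit.a}--\eqref{eq:CD-double-Jacobi-explicit.c} of Remark \ref{rem:explicit-Jacobi-Courant-Dorfman-remark}, each of which lives in a different summand of $E\otimes A\otimes A + c.p.$ and hence converts, under the standard index convention, to precisely the corresponding terms in $[[e_{ij},f_{uv}],g_{pq}] + [f_{uv},[e_{ij},g_{pq}]]$. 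The bookkeeping is routine but delicate; the final check amounts to verifying that the repeated indices match slot-by-slot on each side.
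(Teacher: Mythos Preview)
Your overall strategy coincides with the paper's: direct index-reduction of the double axioms, verifying well-definedness first and then each of \eqref{eq:CD-comm.a}--\eqref{eq:CD-comm.f} in turn. The treatment of well-definedness and of the ``easy'' axioms \eqref{eq:CD-comm.a}, \eqref{eq:CD-comm.c}, \eqref{eq:CD-comm.e}, \eqref{eq:CD-comm.f} is essentially what the paper does.

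However, your plan for the Jacobi identity \eqref{eq:CD-comm.d} has a genuine gap. The expansion of $[e_{ij},[f_{uv},g_{pq}]]$ does not produce four terms but six: once you apply the Leibniz rule \eqref{eq:CD-comm.a} to the product $\cc{f,g}^{\prime}_{l,pv}\cc{f,g}^{\pprime}_{l,uq}$ (and its $r$-counterpart), you obtain both a bracket term and a pairing-with-derivative term, and the bracket term further splits into $l$ and $r$ parts. Three of these six terms---those in which the \emph{outer} bracket hits $\cc{f,g}^{\prime}_l$ or $\cc{f,g}^{\prime}_r$---are indeed governed by \eqref{eq:CD-double-Jacobi-explicit.a}--\eqref{eq:CD-double-Jacobi-explicit.c}. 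But the remaining three involve $\cc{e,\cc{f,g}^{\pprime}_r}_l$, $\cc{e,\cc{f,g}^{\pprime}_r}_r$, and $\ii{e,\partial\cc{f,g}^{\pprime}_l}$, with the \emph{second} Sweedler leg of the inner bracket in the active slot; these are not covered by Remark~\ref{rem:explicit-Jacobi-Courant-Dorfman-remark}. The paper handles them via Lemma~\ref{lem:Jacobi-doble-skew}, whose identities \eqref{eq:ecuaciones-Jacobi-doble-skew.a}--\eqref{eq:ecuaciones-Jacobi-doble-skew.c} are obtained by feeding the skewsymmetry \eqref{eq:CD.a} into \eqref{eq:CD.f} with permuted arguments. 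Without these ``dual'' identities the index bookkeeping does not close. The paper also uses a further device you omit: it rewrites $[[e_{ij},f_{uv}],g_{pq}]$ as $\partial_N\langle [e_{ij},f_{uv}],g_{pq}\rangle - [g_{pq},[e_{ij},f_{uv}]]$ via the already-established \eqref{eq:CD-comm.c}, which makes the eighteen terms on the right-hand side line up with the eighteen on the left.

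A smaller issue: for \eqref{eq:CD-comm.b} there are three pairs of free indices $(i,j),(u,v),(p,q)$, not two, and the left-hand side splits via Leibniz into two pieces, only one of which is the direct index-reduction of \eqref{eq:CD.g}; the other requires the cyclically permuted variant recorded as Claim~\ref{claim:auxiliar-CD7-KR}.
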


\begin{proof}
Let $B_N$ be the commutative algebra freely generated by the set of symbols $\{a_{ij}\mid {a\in A},\; {1\leq i,j\leq\lvert N\rvert }\}$. Consider the $B_N$-module $F_N$ freely generated by the set of symbols $\{e_{ij}\mid {e\in E},\, {1\leq i,j\leq \lvert N\rvert}\}$.
Since $A_N$ and $E_N$ are the quotients of $B_N$ and $F_N$ modulo the ideal and the submodule generated by the relations~\eqref{eq:KR-relations-defining-AV} and~\eqref{eq:KR-relaciones-bimodules}, respectively, we can start defining maps
\begin{equation}\label{eq:thm:KR-double-Courant-Dorfman.1}
\partial_N\colon B_N\lto F_N,\quad\langle-,-\rangle\colon F_N\otimes_{B_N} F_N\lto B_N,\quad [-,-]\colon F_N\otimes F_N\lto F_N,
\end{equation}
and then show they satisfy appropriate relations associated by~\eqref{eq:KR-relaciones-bimodules}, so they descend to 
\begin{equation}\label{eq:thm:KR-double-Courant-Dorfman.2}
\partial_N\colon A_N\lto E_N,\quad \langle-,-\rangle\colon E_N\otimes_{A_N} E_N\lto A_N,\quad [-,-]\colon E_N\otimes E_N\lto E_N.
\end{equation}
The maps $\partial_N$, $\langle-,-\rangle$ and $[-,-]$ in~\eqref{eq:thm:KR-double-Courant-Dorfman.1} are respectively defined extending~\eqref{eq:KR-CD-thm} by imposing the Leibniz rule on $\partial_N$, the condition that $\langle-,-\rangle$ is bilinear over $B_N$, and the condition that $[-,-]$ satisfies~\eqref{eq:CD-comm.a} and~\eqref{eq:CD-comm.g}.
To show that the maps~\eqref{eq:thm:KR-double-Courant-Dorfman.1} descend to maps~\eqref{eq:thm:KR-double-Courant-Dorfman.2}, we need to prove the following relations for all $a,b\in A$, $\nu\in I$, $e,f,g\in E$, $i,j,u,v,t=1,\ldots,\lvert N\rvert$:
\begin{subequations}\label{eq:thm:KR-double-Courant-Dorfman.3}
\begin{gather}\label{eq:thm:KR-double-Courant-Dorfman.3.a}
\partial_N((a+b)_{ij})=\partial_N(a_{ij}+b_{ij}),
\quad
\partial_N((ab)_{ij})=\partial_N(a_{it}b_{tj}),
\quad
\partial_N(\nu_{ij})=\partial_N(\mathrm{I}_{ij}),
\\\label{eq:thm:KR-double-Courant-Dorfman.3.b}
\langle e_{ij},(f+g)_{uv}\rangle=\langle e_{ij},f_{uv}\rangle+\langle e_{ij},g_{uv}\rangle,
\quad
\langle(e+f)_{ij},g_{uv}\rangle=\langle e_{ij},g_{uv}\rangle+\langle f_{ij},g_{uv}\rangle,
\\\label{eq:thm:KR-double-Courant-Dorfman.3.c}
\langle e_{ij},(af)_{uv}\rangle=\langle e_{ij},a_{ut}f_{tv}\rangle,
\quad
\langle e_{ij},(fa)_{uv}\rangle=\langle e_{ij},a_{tv}f_{uv}\rangle,
\\\label{eq:thm:KR-double-Courant-Dorfman.3.d}
\langle(ae)_{ij},f_{uv}\rangle=\langle a_{it}e_{tj},f_{uv}\rangle,
\quad
\langle (ea)_{ij},f_{uv}\rangle=\langle a_{tj}e_{it},f_{uv}\rangle,
\\\label{eq:thm:KR-double-Courant-Dorfman.3.e}
[e_{ij},(f+g)_{uv})]=[e_{ij},f_{uv}]+[e_{ij},g_{uv}],
\quad
[(e+f)_{ij},g_{uv}]=[e_{ij},g_{uv}]+[f_{ij},g_{uv}],
\\\label{eq:thm:KR-double-Courant-Dorfman.3.f}
[e_{ij},(af)_{uv}]=[e_{ij},a_{ut}f_{tv}],
\quad
[e_{ij},(fa)_{uv}]=[e_{ij},a_{tv}f_{ut}],
\\\label{eq:thm:KR-double-Courant-Dorfman.3.g}
[(ae)_{ij},f_{uv}]=[a_{it}e_{tj},f_{uv}],
\quad
[(ea)_{ij},f_{uv}]=[a_{tj}e_{it},f_{uv}].
\end{gather}\end{subequations}
Note that these equations take place in $A_N$ and $E_N$, but involve sum and multiplication operations on $B_N$ and $F_N$. 
The relations~\eqref{eq:thm:KR-double-Courant-Dorfman.3.a} follow from~\eqref{eq:KR-CD-thm.a}, because $\partial$ is an $R$-linear derivation.
The relations~\eqref{eq:thm:KR-double-Courant-Dorfman.3.b} and~\eqref{eq:thm:KR-double-Courant-Dorfman.3.e} respectively follow because $\ii{-,-}$ and $\cc{-,-}$ are additive in each variable.
The relations~\eqref{eq:thm:KR-double-Courant-Dorfman.3.c} and~\eqref{eq:thm:KR-double-Courant-Dorfman.3.d} follow because $\ii{e,af}=a\ii{e,f}$, $\ii{e,fa}=\ii{e,f}a$, $\ii{e,fa}=\ii{e,f}a$, $\ii{ae,f}=a*\ii{e,f}$ and $\ii{ea,f}=\ii{e,f}*a$ (see~\eqref{eq:pairing.1}).
To prove~\eqref{eq:thm:KR-double-Courant-Dorfman.3.f} and~\eqref{eq:thm:KR-double-Courant-Dorfman.3.g}, we project~\eqref{eq:CD.d}, \eqref{eq:CD.e}, \eqref{eq:CD-identities.1.c} and~\eqref{eq:CD-identities.1.d} onto $E\otimes A$ and $A\otimes E$, so using Sweedler's notation, we obtain
\begin{subequations}\label{eq:proof:thm:KR-double-Courant-Dorfman.1}
\begin{align}\label{eq:proof:thm:KR-double-Courant-Dorfman.1.a}
\cc{e,fa}^{\prime}_l\otimes \cc{e,fa}^{\pprime}_l&= \cc{e,f}^{\prime}_l\otimes \cc{e,f}^{\pprime}_la +f\ii{e,\partial a}^{\prime}\otimes \ii{e,\partial a}^{\pprime},
\\\label{eq:proof:thm:KR-double-Courant-Dorfman.1.b}
\cc{e,fa}^{\prime}_r\otimes \cc{e,fa}^{\pprime}_r&= \cc{e,f}^{\prime}_r\otimes \cc{e,f}^{\pprime}_ra,
\\\label{eq:proof:thm:KR-double-Courant-Dorfman.1.c}
\cc{e,af}^{\prime}_l\otimes \cc{e,af}^{\pprime}_l&=a \cc{e,f}^{\prime}_l\otimes \cc{e,f}^{\pprime}_l,
\\\label{eq:proof:thm:KR-double-Courant-Dorfman.1.d}
\cc{e,af}^{\prime}_r\otimes \cc{e,af}^{\pprime}_r&=a \cc{e,f}^{\prime}_r\otimes \cc{e,f}^{\pprime}_r+\ii{e,\partial a}^{\prime}\otimes \ii{e,\partial a}^{\pprime}f,
\\\label{eq:proof:thm:KR-double-Courant-Dorfman.1.e}
\cc{ae,f}^{\prime}_l\otimes \cc{ae,f}^{\pprime}_l&= \cc{e,f}^{\prime}_l\otimes a\cc{e,f}^{\pprime}_l -\ii{\partial a,f}^{\prime}e\otimes \ii{\partial a,f}^{\pprime},
\\\label{eq:proof:thm:KR-double-Courant-Dorfman.1.f}
\cc{ae,f}^{\prime}_r\otimes \cc{ae,f}^{\pprime}_r&=\cc{e,f}^{\prime}_r\otimes a\cc{e,f}^{\pprime}_r+\ii{e,f}^{\prime}\otimes \partial a\ii{e,f}^{\pprime},
\\\label{eq:proof:thm:KR-double-Courant-Dorfman.1.g}
\cc{ea,f}^{\prime}_l\otimes \cc{ea,f}^{\pprime}_l&= \cc{e,f}^{\prime}_la\otimes\cc{e,f}^{\pprime}_l +\ii{e,f}^{\prime}\partial a\otimes \ii{e,f}^{\pprime},
\\\label{eq:proof:thm:KR-double-Courant-Dorfman.1.h}
\cc{ea,f}^{\prime}_r\otimes \cc{ae,f}^{\pprime}_r&=\cc{e,f}^{\prime}_ra\otimes\cc{e,f}^{\pprime}_r-\ii{\partial a,f}^{\prime}\otimes e\ii{\partial a,f}^{\pprime}.
\end{align}\end{subequations}  
The left-hand equation in~\eqref{eq:thm:KR-double-Courant-Dorfman.3.f} now follows from~\eqref{eq:KR-relaciones-bimodules.b}, \eqref{eq:KR-CD-thm}, \eqref{eq:proof:thm:KR-double-Courant-Dorfman.1.c} and~\eqref{eq:proof:thm:KR-double-Courant-Dorfman.1.d}:
\begin{align*}
[e_{ij},(af)_{uv}]&
=\cc{e,af}^{\prime}_{l,uj}\cc{e,af}^{\pprime}_{l,iv}+\cc{e,af}^{\prime}_{r,uj}\cc{e,af}^{\pprime}_{r,iv}
\\&
=(a\cc{e,f}^{\prime}_{l})_{uj}\cc{e,f}^{\pprime}_{l,iv} + (a\cc{e,f}^{\prime}_{r})_{uj}\cc{e,f}^{\pprime}_{r,iv} + \ii{e,\partial a}^{\prime}_{uj}(\ii{e,\partial a}^{\pprime}f)_{iv}
\\&
=a_{ut}\big(\cc{e,f}^{\prime}_{l,tj}\cc{e,f}^{\pprime}_{l,iv} + \cc{e,f}^{\prime}_{r,tj}\cc{e,f}^{\pprime}_{r,iv}\big)+\ii{e,\partial a}^{\prime}_{uj}\ii{e,\partial a}^{\pprime}_{it} f_{tv}
\\&
=a_{ut}[e_{ij},f_{tv}]+\langle e_{ij},\partial_N a_{ut}\rangle f_{tv}=[e_{ij},a_{ut}f_{tv}]
\end{align*}
(the last identity is a consequence of our definition of $[-,-]$ on $F_N\otimes F_N$, by~\eqref{eq:CD-comm.a}). 
The right-hand equation in~\eqref{eq:thm:KR-double-Courant-Dorfman.3.f} follows by a similar calculation from~\eqref{eq:KR-relaciones-bimodules.c}, \eqref{eq:KR-CD-thm}, \eqref{eq:proof:thm:KR-double-Courant-Dorfman.1.a} and~\eqref{eq:proof:thm:KR-double-Courant-Dorfman.1.b}. 
The left-hand relation in~\eqref{eq:thm:KR-double-Courant-Dorfman.3.g} follows from~\eqref{eq:KR-relaciones-bimodules.b}, \eqref{eq:KR-CD-thm}, \eqref{eq:proof:thm:KR-double-Courant-Dorfman.1.e} and~\eqref{eq:proof:thm:KR-double-Courant-Dorfman.1.f}:
\begin{align*}&
[(ae)_{ij},f_{uv}]
=\cc{ae,f}^{\prime}_{l,uj}\cc{ae,f}^{\pprime}_{l,iv}+\cc{ae,f}^{\prime}_{r,uj}\cc{ae,f}^{\pprime}_{r,iv}
=\cc{e,f}^{\prime}_{l,uj}(a\cc{e,f}^{\pprime}_{l})_{iv}
\\&\qquad 
-(\ii{e,\partial a}^{\prime}e)_{uj}\ii{\partial a,f}^{\pprime}_{iv}+\cc{e,f}^{\prime}_{r,uj}(a\cc{e,f}^{\pprime}_{r})_{iv}+\ii{e,f}^{\prime}_{uj}(\partial a\ii{e,f}^{\pprime})_{iv}
\\&\quad
=a_{it}(\cc{e,f}^{\prime}_{l,uj}\cc{e,f}^{\pprime}_{l,tv}+\cc{e,f}^{\prime}_{r,uj}\cc{e,f}^{\pprime}_{r,tv})
+\partial_Na_{it}\ii{e,f}^{\prime}_{uj}\ii{e,f}^{\pprime}_{tv}
\\&\qquad 
-\ii{\partial a,f}^{\prime}_{ut}\ii{\partial a,f}^{\pprime}_{iv}e_{tj}
\\&\quad 
=a_{it}[e_{tj},f_{uv}]+\langle e_{tj},f_{uv}\rangle\partial_Na_{it}-\langle \partial_Na_{it},f_{uv}\rangle e_{tj}
=[a_{it}e_{tj},f_{uv}]
\end{align*}
(the last identity follows from the definition of $[-,-]$ on $F_N\otimes F_N$, by~\eqref{eq:CD-comm.g}).
The right-hand relation in~\eqref{eq:thm:KR-double-Courant-Dorfman.3.g} follows by a similar calculation from~\eqref{eq:KR-relaciones-bimodules.c}, \eqref{eq:KR-CD-thm}, \eqref{eq:proof:thm:KR-double-Courant-Dorfman.1.g} and~\eqref{eq:proof:thm:KR-double-Courant-Dorfman.1.h}. 
Thus the maps~\eqref{eq:thm:KR-double-Courant-Dorfman.1} descend to maps~\eqref{eq:thm:KR-double-Courant-Dorfman.2}, 
and $\partial_N\colon A_N\to E_N$ is a derivation, because so is $\partial_N\colon B_N\to F_N$. 
Moreover, the bilinear form $\langle-,-\rangle$ on $E_N$ is symmetric, because so is the pairing $\ii{-,-}$ on $E$ and hence the bilinear form $\langle-,-\rangle$ on $F_N$ (cf.~\cite[Lemma 3.3.3]{VdB08a}). 

To conclude the proof, we need to show the maps~\eqref{eq:thm:KR-double-Courant-Dorfman.2} satisfy the axioms~\eqref{eq:CD-comm}.
The bracket $[-,-]$ on $E_N$ satisfies~\eqref{eq:CD-comm.a} (and~\eqref{eq:CD-comm.g}), because so does the bracket $[-,-]$ on $F_N$ by construction.
The identity \eqref{eq:CD-comm.e} follows because \eqref{eq:CD.b} and \eqref{eq:KR-CD-thm.c} imply 
\[
[\partial_N a_{ij}, e_{uv}]= [(\partial a)_{ij}, e_{uv}]=\cc{\partial a, e}^{\prime}_{l,uj} \cc{\partial a, e}^{\pprime}_{l,iv} + \cc{\partial a, e}^{\prime}_{r,uj} \cc{\partial a, e}^{\pprime}_{r, iv}=0. 
\]
Similarly, the identity \eqref{eq:CD-comm.f} follows because by \eqref{eq:CD.c}, \eqref{eq:KR-CD-thm.a} and \eqref{eq:KR-CD-thm.b}, we get
\[
\langle \partial_N a_{ij},\partial_N b_{uv}\rangle= \langle (\partial a)_{ij}, (\partial b)_{uv}\rangle=\ii{\partial a,\partial b}^{\prime}_{uj} \ii{\partial a,\partial b}^{\pprime}_{iv}=0.
\]

To prove~\eqref{eq:CD-comm.c}, we project \eqref{eq:CD.a} onto $E\otimes A$ and $A\otimes E$, so using~\eqref{eq:partialLeibniz-rule}, we obtain 
\begin{subequations}
\label{eq:KR-skew-symm}
\begin{align}\label{eq:KR-skew-symm.a}
\partial\ii{e,f}^{\prime}\otimes \ii{e,f}^{\pprime}&=\cc{e,f}^{\prime}_l\otimes \cc{e,f}^{\pprime}_l+\cc{f,e}^{\pprime}_r\otimes \cc{f,e}^{\prime}_r,
\\\label{eq:KR-skew-symm.b}
\ii{e,f}^{\prime}\otimes \partial \ii{e,f}^{\pprime}&=\cc{e,f}^{\prime}_r\otimes \cc{e,f}^{\pprime}_r+\cc{f,e}^{\pprime}_l\otimes \cc{f,e}^{\prime}_l.
\end{align}
\end{subequations}
The identity~\eqref{eq:CD-comm.c} follows now because using \eqref{eq:KR-CD-thm} and \eqref{eq:KR-skew-symm}, we have
\begin{align*}
&[e_{ij},f_{uv}]+[f_{uv},e_{ij}]
\\&\quad
=\cc{e,f}^{\prime}_{l,uj} \cc{e,f}^{\pprime}_{l,iv}
+ \cc{f,e}^{\pprime}_{r,uj}\cc{f,e}^{\prime}_{r,iv}
+ \cc{e,f}^{\prime}_{r,uj} \cc{e,f}^{\pprime}_{r,iv}
+\cc{f,e}^{\pprime}_{l,uj}\cc{f,e}^{\prime}_{l,iv}
\\
&\quad=(\partial\ii{e,f}^{\prime})_{uj}\ii{e,f}^{\pprime}_{iv}+\ii{e,f}^{\prime}_{uj}(\partial\ii{e,f}^{\pprime})_{iv}
\\
&\quad=\partial_N(\ii{e,f}^{\prime}_{uj})\ii{e,f}^{\pprime}_{iv}+\ii{e,f}^{\prime}_{uj}\partial_N(\ii{e,f}^{\pprime})_{iv}
\\
&\quad=\partial_N(\ii{e,f}^{\prime}_{uj}\ii{e,f}^{\pprime}_{iv})=\partial_N\langle e_{ij},f_{uv}\rangle.
\end{align*}

To prove~\eqref{eq:CD-comm.b}, we need to check this identity on generators of $E_N$. 
Since $\langle-,-\rangle$ is $A_N$-bilinear, using~\eqref{eq:KR-CD-thm.a} and~\eqref{eq:KR-CD-thm.b}, we see that the left-hand side of \eqref{eq:CD-comm.b} is 
\begin{align}\notag
\big\langle e_{ij},\partial_N\langle f_{uv}, g_{pq} \rangle\big\rangle
&=\langle e_{ij},\partial_N (\ii{f,g}^{\prime}_{pv}\ii{f,g}^{\pprime}_{uq})\rangle
\\\label{eq:KR-CD7-LHS}
&=\langle e_{ij}, \partial\ii{f,g}^{\prime}_{pv}\rangle\ii{f,g}^{\pprime}_{uq}+\ii{f,g}^{\prime}_{pv}\langle e_{ij},\partial\ii{f,g}^{\pprime}_{uq}\rangle.
\end{align}
To calculate the two terms in the right-hand side of~\eqref{eq:KR-CD7-LHS}, we expand the identities~\eqref{eq:CD.g} and~\eqref{eq:CD-identities.1.f} in Sweedler's notation (with $e$ and $f$ swapped in~\eqref{eq:CD-identities.1.f}):
\begin{subequations}\label{eq:auxiliar-CD7-KR.1}
\begin{align}\label{eq:auxiliar-CD7-KR.1.a}
\begin{split}&
\ii{e,\partial\ii{f,g}^{\prime}}^{\prime}\otimes\ii{e,\partial\ii{f,g}^{\prime}}^{\pprime}\otimes\ii{f,g}^{\pprime}
\\&\quad
=\ii{\cc{e,f}^{\prime}_l,g}^{\prime}\otimes\cc{e,f}^{\pprime}_l\otimes\ii{\cc{e,f}^{\prime}_l,g}^{\pprime}+\cc{e,g}^{\prime}_r\otimes\ii{f,\cc{e,g}^{\pprime}_r}^{\prime}\otimes\ii{f,\cc{e,g}^{\pprime}_r}^{\pprime},
\end{split}
\\\label{eq:auxiliar-CD7-KR.1.b}
\begin{split}&
\ii{f,g}^{\prime}\otimes\ii{e,\partial\ii{f,g}^{\pprime}}^{\prime}\otimes\ii{e,\partial\ii{f,g}^{\pprime}}^{\pprime}
\\&\quad
=\ii{\cc{e,f}^{\pprime}_r,g}^{\prime}\otimes\cc{e,f}^{\prime}_r\otimes\ii{\cc{e,f}^{\pprime}_r,g}^{\pprime}
+\ii{f,\cc{e,g}^{\prime}_l}^{\prime}\otimes\ii{f,\cc{e,g}^{\prime}_l}^{\pprime}\otimes\cc{e,g}^{\pprime}_l.
\end{split}\end{align}\end{subequations}
Using~\eqref{eq:auxiliar-CD7-KR.1} and~\eqref{eq:KR-CD-thm}, we can now calculate the terms in the right-hand side of~\eqref{eq:KR-CD7-LHS}:
\begin{subequations}\label{eq:auxiliar-CD7-KR.2}
\begin{align}\notag&
\langle e_{ij}, \partial\ii{f,g}^{\prime}_{pv}\rangle\ii{f,g}^{\pprime}_{uq}
=\ii{e,\partial \ii{f,g}^{\prime}}^{\prime}_{pj} \ii{e,\partial \ii{f,g}^{\prime}}^{\pprime}_{iv}\ii{f,g}^{\pprime}_{uq}
\\\notag&\qquad
=\ii{\cc{e,f}^{\prime}_l,g}^{\prime}_{pj}\cc{e,f}^{\pprime}_{l,iv}\ii{\cc{e,f}^{\prime}_l,g}^{\pprime}_{uq}+\cc{e,g}^{\prime}_{r,pj}\ii{f,\cc{e,g}^{\pprime}_r}^{\prime}_{iv} \ii{f,\cc{e,g}^{\pprime}_r}^{\pprime}_{uq} 
\\\label{eq:auxiliar-CD7-KR.2.a}&\qquad
=\langle\cc{e,f}^{\prime}_{l,uj},g_{pq}\rangle\cc{e,f}^{\pprime}_{l,iv}+\cc{e,g}^{\prime}_{r,pj}\langle f_{uv},\cc{e,g}^{\pprime}_{r,iq}\rangle,
\\\notag&
\ii{f,g}^{\prime}_{pv}\langle e_{ij},\partial\ii{f,g}^{\pprime}_{uq}\rangle
=\ii{f,g}^{\prime}_{pv}\ii{e,\partial\ii{f,g}^{\pprime}}^{\prime}_{uj} \ii{e,\partial\ii{f,g}^{\pprime}}^{\pprime}_{iq}
\\\notag&\qquad
=\ii{\cc{e,f}^{\pprime}_r,g}^{\prime}_{pv}\cc{e,f}^{\prime}_{r,uj}\ii{\cc{e,f}^{\pprime}_r,g}^{\pprime}_{iq}+\ii{f,\cc{e,g}^{\prime}_l}^{\prime}_{pv}  \ii{f,\cc{e,g}^{\prime}_l}^{\pprime}_{uj}\cc{e,g}^{\pprime}_{l,iq}
\\\label{eq:auxiliar-CD7-KR.2.b}&\qquad
=\cc{e,f}^{\prime}_{r,uj}\langle\cc{e,f}^{\pprime}_{r,iv},g_{pq}\rangle+\langle f_{uv},\cc{e,g}^{\prime}_{l,pj}\rangle\cc{e,g}^{\pprime}_{l,iq}.
\end{align}\end{subequations}
Putting~\eqref{eq:auxiliar-CD7-KR.2} together into~\eqref{eq:KR-CD7-LHS} and using~\eqref{eq:KR-CD-thm} again, we obtain the identity~\eqref{eq:CD-comm.b}:
\begin{align*}
\big\langle e_{ij},\partial_N\langle f_{uv}, g_{pq} \rangle\big\rangle
&=\langle\cc{e,f}^{\prime}_{l,uj}\cc{e,f}^{\pprime}_{l,iv},g_{pq}\rangle
 +\langle\cc{e,f}^{\prime}_{r,uj}\cc{e,f}^{\pprime}_{r,iv},g_{pq}\rangle
\\&
\qquad +\langle f_{uv},\cc{e,g}^{\prime}_{l,pj}\cc{e,g}^{\pprime}_{l,iq}\rangle
+\langle f_{uv},\cc{e,g}^{\prime}_{r,pj}\cc{e,g}^{\pprime}_{r,iq}\rangle
\\&
=\langle[e_{ij},f_{uv}],g_{pq}\rangle+\langle f_{uv},[e_{ij},g_{pq}]\rangle.
\end{align*}

Finally, we need to prove the Jacobi identity~\eqref{eq:CD-comm.d} on generators of $E_N$, that is, 
\begin{equation}
\label{eq:KR-Jacobi-inicial}
[e_{ij},[f_{uv},g_{pq}]]=[[e_{ij},f_{uv}],g_{pq}]+[f_{uv},[e_{ij},g_{pq}]].
\end{equation}
We start calculating the three terms in \eqref{eq:KR-Jacobi-inicial} 
using~\eqref{eq:CD-comm.a},~\eqref{eq:CD-comm.g} and~\eqref{eq:KR-CD-thm}:
\begin{subequations}\label{eq:Jacobi-KR.1}
\begin{align}\notag
[e_{ij},[f_{uv},g_{pq}]]&
=[e_{ij},\cc{f,g}^{\prime}_{l,pv}\cc{f,g}^{\pprime}_{l,uq}]+[e_{ij},\cc{f,g}^{\prime}_{r,pv}\cc{f,g}^{\pprime}_{r,uq}]
\\\notag
&=\cc{f,g}^{\pprime}_{l,uq}[e_{ij},\cc{f,g}^{\prime}_{l,pv}]+\langle e_{ij},\partial_N\cc{f,g}^{\pprime}_{l,uq}\rangle\cc{f,g}^{\prime}_{l,pv}
\\\notag
&\quad + \cc{f,g}^{\prime}_{r,pv}[e_{ij},\cc{f,g}^{\pprime}_{r,uq}]+\langle e_{ij},\partial_N\cc{f,g}^{\prime}_{r,pv}\rangle \cc{f,g}^{\pprime}_{r,uq}
\\\label{eq:Jacobi-KR.1.1}
&=x_l+x_r,
\\\notag
[[e_{ij},f_{uv}],g_{pq}]&=[\cc{e,f}^{\prime}_{l,uj}\cc{e,f}^{\pprime}_{l,iv},g_{pq}]+[\cc{e,f}^{\prime}_{r,uj}\cc{e,f}^{\pprime}_{r,iv},g_{pq}]
\\\notag &
=\cc{e,f}^{\pprime}_{l,iv}][\cc{e,f}^{\prime}_{l,uj},g_{pq}]+\cc{e,f}^{\prime}_{r,uj}[\cc{e,f}^{\pprime}_{r,iv},g_{pq}]
\\\notag &\quad
+\langle\cc{e,f}^{\prime}_{l,uj},g_{pq}\rangle\partial_N\cc{e,f}^{\pprime}_{l,iv}
+\langle\cc{e,f}^{\pprime}_{r,iv},g_{pq}\rangle\partial_N\cc{e,f}^{\prime}_{r,uj}
\\\notag &\quad
-\langle\partial_N\cc{e,f}^{\prime}_{r,uj},g_{pq}\rangle\cc{e,f}^{\pprime}_{r,iv}
-\langle\partial_N\cc{e,f}^{\pprime}_{l,iv},g_{pq}\rangle\cc{e,f}^{\prime}_{l,uj}
\\\label{eq:Jacobi-KR.2.1.a}&
=y_l+y_r,
\\\notag 
[f_{uv},[e_{ij},g_{pq}]]&
=[f_{uv},\cc{e,g}^{\prime}_{l,pj}\cc{e,g}^{\pprime}_{l,iq}]+[f_{uv},\cc{e,g}^{\prime}_{r,pj}\cc{e,g}^{\pprime}_{r,iq}]
\\\notag &
=\cc{e,g}^{\pprime}_{l,iq}[f_{uv},\cc{e,g}^{\prime}_{l,pj}]
+\langle f_{uv},\partial\cc{e,g}^{\prime}_{r,pj}\rangle\cc{e,g}^{\pprime}_{r,iq}
\\\notag &\quad
+\cc{e,g}^{\prime}_{r,pj}[f_{uv},\cc{e,g}^{\pprime}_{r,iq}]
+\langle f_{uv},\partial_N\cc{e,g}^{\pprime}_{l,iq}\rangle\cc{e,g}^{\prime}_{l,pj}
\\\label{eq:Jacobi-KR.2.1.b}&
=z_l+z_r,
\end{align}
\end{subequations}
where we define 
\begin{subequations}\label{eq:Jacobi-KR.1.2}
\begin{align}\label{eq:Jacobi-KR.1.2.a}
\begin{split}
x_l&
\defeq\(\cc{e,\cc{f,g}^{\prime}_l}^{\prime}_{l,pj} \cc{e,\cc{f,g}^{\prime}_l}^{\pprime}_{l,iv}
+\cc{e,\cc{f,g}^{\prime}_l}^{\prime}_{r,pj} \cc{e,\cc{f,g}^{\prime}_l}^{\pprime}_{r,iv}\)\cc{f,g}^{\pprime}_{l,uq}
\\ &
\quad +\ii{e,\partial\cc{f,g}^{\prime}_r}^{\prime}_{pj} \ii{e,\partial\cc{f,g}^{\prime}_r}^{\pprime}_{iv}\cc{f,g}^{\pprime}_{r,uq},
\end{split}
\\\label{eq:Jacobi-KR.1.2.b}
\begin{split}
x_r&\defeq\cc{f,g}^{\prime}_{l,pv}\ii{e,\partial\cc{f,g}^{\pprime}_l}^{\prime}_{uj} \ii{e,\partial\cc{f,g}^{\pprime}_l}^{\pprime}_{iq}
\\ &
\quad +\cc{f,g}^{\prime}_{r,pv}\(\cc{e,\cc{f,g}^{\pprime}_r}^{\prime}_{l,uj}\cc{e,\cc{f,g}^{\pprime}_r}^{\pprime}_{l,iq} +\cc{e,\cc{f,g}^{\pprime}_r}^{\prime}_{r,uj}\cc{e,\cc{f,g}^{\pprime}_r}^{\pprime}_{r,iq}\).
\end{split}
\\\label{eq:Jacobi-KR.2.2.a}
\begin{split}
y_l&
\defeq\cc{e,f}^{\pprime}_{l,iv}\(\cc{\cc{e,f}^{\prime}_l,g}^{\prime}_{l,pj}\cc{\cc{e,f}^{\prime}_l,g}^{\pprime}_{l,uq}+\cc{\cc{e,f}^{\prime}_l,g}^{\prime}_{r,pj}\cc{\cc{e,f}^{\prime}_l,g}^{\pprime}_{r,uq}\)
\\ &\hspace*{-1.3ex}
+\ii{\cc{e,f}^{\prime}_l,g}^{\prime}_{pj}\ii{\cc{e,f}^{\prime}_l,g}^{\pprime}_{uq}\(\partial\cc{e,f}^{\pprime}_l\)_{iv}-\ii{\partial\cc{e,f}^{\prime}_r,g}^{\prime}_{pj}\ii{\partial\cc{e,f}^{\prime}_r,g}^{\pprime}_{uq}\cc{e,f}^{\pprime}_{r,iv},
\end{split}
\\\label{eq:Jacobi-KR.2.2.b}
\begin{split}
y_r&
\defeq\cc{e,f}^{\prime}_{r,uj}\(\cc{\cc{e,f}^{\pprime}_r,g}^{\prime}_{l,pv}\cc{\cc{e,f}^{\pprime}_r,g}^{\pprime}_{l,iq}+\cc{\cc{e,f}^{\pprime}_r,g}^{\prime}_{r,pv}\cc{\cc{e,f}^{\pprime}_r,g}^{\pprime}_{r,iq}\)
\\ &
\hspace*{-1.3ex}  +\ii{\cc{e,f}^{\pprime}_r,g}^{\prime}_{pv}\ii{\cc{e,f}^{\pprime}_r,g}^{\pprime}_{iq}\(\partial\cc{e,f}^{\prime}_r\)_{uj}-\ii{\partial\cc{e,f}^{\pprime}_l,g}^{\prime}_{pv}\ii{\partial\cc{e,f}^{\pprime}_l,g}^{\pprime}_{iq}\cc{e,f}^{\prime}_{l,uj},
\end{split}
\\\label{eq:Jacobi-KR.2.2.c}
\begin{split}
z_l&
\defeq\cc{e,g}^{\prime}_{r,pj}\(\cc{f,\cc{e,g}^{\pprime}_r}^{\prime}_{l,iv}\cc{f,\cc{e,g}^{\pprime}_r}^{\pprime}_{l,uq}+\cc{f,\cc{e,g}^{\pprime}_r}^{\prime}_{r,iv}\cc{f,\cc{e,g}^{\pprime}_r}^{\pprime}_{r,uq}\)
\\ &
\quad +\ii{f,\partial\cc{e,g}^{\pprime}_l}^{\prime}_{iv}\ii{f,\partial\cc{e,g}^{\pprime}_l}^{\pprime}_{uq}\cc{e,g}^{\prime}_{l,pj},
\end{split}
\\\label{eq:Jacobi-KR.2.2.d}
\begin{split}
z_r&
\defeq\cc{e,g}^{\pprime}_{l,iq}\(\cc{f,\cc{e,g}^{\prime}_l}^{\prime}_{l,pv}\cc{f,\cc{e,g}^{\prime}_l}^{\pprime}_{l,uj}+\cc{f,\cc{e,g}^{\prime}_l}^{\prime}_{r,pv}\cc{f,\cc{e,g}^{\prime}_l}^{\pprime}_{r,uj}\)
\\ &
\quad +\cc{e,g}^{\pprime}_{r,iq}\ii{f,\partial\cc{e,g}^{\prime}_r}^{\prime}_{pv}\ii{f,\partial\cc{e,g}^{\prime}_r}^{\pprime}_{uj}.
\end{split}\end{align}
\end{subequations}
We consider now the Jacobi identities~\eqref{eq:CD.f} and~\eqref{eq:CD-identities.1.e} for the double Dorfman bracket. As they take place in $E\otimes A\otimes A+c.p.$, they can be projected onto each direct summand.
Using \eqref{eq:decomposition-Courant-Dorfman-def-1}, \eqref{eq:partialLeibniz-rule}, \eqref{eq:extension-CD-bracket} and \eqref{eq:ext-pairing-zero}, we see that \eqref{eq:CD.f} is equivalent to the three identities
\begin{subequations}\label{eq:CD-explicit-left-double-Jacobi}
\begin{align}\begin{split} \label{eq:CD-explicit-left-double-Jacobi.a}
\cc{e,\cc{f,g}^{\prime}_l}_l\otimes\cc{f,g}^{\pprime}_l&=\cc{e,g}^{\prime}_l\otimes\ii{f,\partial\cc{e,g}^{\pprime}_l} 
  +\cc{\cc{e,f}^{\prime}_l,g}_l\otimes_1 \cc{e,f}^{\pprime}_l,
 \end{split}
\\
\begin{split}\label{eq:CD-explicit-left-double-Jacobi.b}
\cc{e,\cc{f,g}^{\prime}_l}_r\otimes\cc{f,g}^{\pprime}_l&=\cc{e,g}^{\prime}_r\otimes\cc{f,\cc{e,g}^{\pprime}_r}_l
+\ii{\cc{e,f}^{\prime}_l,g}\otimes_1\partial\cc{e,f}^{\pprime}_l
\\
&\qquad -\ii{\partial\cc{e,f}^{\prime}_r,g}\otimes_1\cc{e,f}^{\pprime}_r,
\end{split}
\\
\begin{split}\label{eq:CD-explicit-left-double-Jacobi.c}  
\ii{e,\partial\cc{f,g}^{\prime}_r}\otimes \cc{f,g}^{\pprime}_r&=\cc{e,g}^{\prime}_r\otimes\cc{f,\cc{e,g}^{\pprime}_r}_r
 +\cc{\cc{e,f}^{\prime}_l,g}_r\otimes_1\cc{e,f}^{\pprime}_l,
\end{split}\end{align}\end{subequations}
taking place in $E\otimes A\otimes A$,  $A\otimes E\otimes A$ and $A\otimes A\otimes E$, respectively. 
By a similar calculation, we can show that~\eqref{eq:CD-identities.1.e} is equivalent to the three identities
\begin{subequations}\label{eq:CD-explicit-right-double-Jacobi}
\begin{align}\label{eq:CD-explicit-right-double-Jacobi.a}
\cc{f,g}^{\prime}_l\otimes\ii{e,\partial\cc{f,g}^{\pprime}_l}
&=\cc{e,f}^{\prime}_r\otimes_1\cc{\cc{e,f}^{\pprime}_r,g}_l+\cc{f,\cc{e,g}^{\prime}_l}_l\otimes\cc{e,g}^{\pprime}_l,
\\\label{eq:CD-explicit-right-double-Jacobi.b}
\begin{split}
\cc{f,g}^{\prime}_r\otimes\cc{e,\cc{f,g}^{\pprime}_r}_l&=-\cc{e,f}^{\prime}_l\otimes_1\ii{\partial\cc{e,f}^{\pprime}_l,g}+\cc{f,\cc{e,g}^{\prime}_l}_r\otimes\cc{e,g}^{\pprime}_l
\\
&\qquad +\partial\cc{e,f}^{\prime}_r\otimes_1\ii{\cc{e,f}^{\pprime}_r,g},
\end{split}
\\\label{eq:CD-explicit-right-double-Jacobi.c}
\cc{f,g}^{\prime}_r\otimes\cc{e,\cc{f,g}^{\pprime}_r}_r&=\cc{e,f}^{\prime}_r\otimes_1\cc{\cc{e,f}^{\pprime}_r,g}_r+\ii{f,\partial\cc{e,g}^{\prime}_r}\otimes\cc{e,g}^{\pprime}_r.
\end{align}\end{subequations}
Applying 
$(\?)_{pj}\otimes(\?)_{iv}\otimes(\?)_{uq}$ to~\eqref{eq:CD-explicit-left-double-Jacobi}, we obtain the identities
\begin{align*}\begin{split}
\cc{e,\cc{f,g}^{\prime}_l}^{\prime}_{l,pj}\otimes\cc{e,\cc{f,g}^{\pprime}_l}^{\pprime}_{l,iv}&\otimes\cc{f,g}^{\pprime}_{l,uq}=\cc{e,g}^{\prime}_{l,pj}\otimes\ii{f,\partial\cc{e,g}^{\pprime}_l}^{\prime}_{iv}\otimes\ii{f,\partial\cc{e,g}^{\pprime}_l}^{\pprime}_{uq}
\\&
\quad+\cc{\cc{e,f}^{\prime}_l,g}^{\prime}_{l,pj}\otimes\cc{e,f}^{\pprime}_{l,iv}\cc{\cc{e,f}^{\prime}_l,g}^{\pprime}_{l,uq},
 \end{split}
\\
\begin{split}
\cc{e,\cc{f,g}^{\prime}_l}^{\prime}_{r,pj}\otimes\cc{e,\cc{f,g}^{\prime}_l}^{\pprime}_{r,iv}&\otimes\cc{f,g}^{\pprime}_{l,uq}=\cc{e,g}^{\prime}_{r,pj}\otimes\cc{f,\cc{e,g}^{\pprime}_r}^{\prime}_{l,iv}\otimes\cc{f,\cc{e,g}^{\pprime}_r}^{\pprime}_{l,uq}
\\
&\quad +\ii{\cc{e,f}^{\prime}_l,g}^{\prime}_{pj}\otimes\partial\cc{e,f}^{\pprime}_{l,iv}\otimes\ii{\cc{e,f}^{\prime}_l,g}^{\pprime}_{uq}
\\
&\quad -\ii{\partial\cc{e,f}^{\prime}_r,g}^{\prime}_{pj}\otimes\cc{e,f}^{\pprime}_{r,iv}\otimes\ii{\partial\cc{e,f}^{\prime}_r,g}^{\pprime}_{uq},
\end{split}
\\
\begin{split}
\ii{e,\partial\cc{f,g}^{\prime}_r}^{\prime}_{pj}\otimes\ii{e,\partial\cc{f,g}^{\prime}_r}^{\pprime}_{iv}&\otimes\cc{f,g}^{\pprime}_{r,uq}=\cc{e,g}^{\prime}_{r,pj}\otimes\cc{f,\cc{e,g}^{\pprime}_r}^{\prime}_{r,iv}\otimes\cc{f,\cc{e,g}^{\pprime}_r}^{\pprime}_{r,uq}
\\&
\quad+\cc{\cc{e,f}^{\prime}_l,g}^{\prime}_{r,pj}\otimes\cc{e,f}^{\pprime}_{l,iv}\otimes\cc{\cc{e,f}^{\prime}_l,g}^{\pprime}_{r,uq},
\end{split}\end{align*}
taking place in the spaces $E_N\otimes A_N\otimes A_N$, $A_N\otimes E_N\otimes A_N$ and $A_N\otimes A_N\otimes E_N$, respectively.
Applying the multiplication maps on these spaces into $E_N$, these identities determine three identities on $E_N$.
Comparing with~\eqref{eq:Jacobi-KR.1.2.a}, \eqref{eq:Jacobi-KR.2.2.a} and~\eqref{eq:Jacobi-KR.2.2.c}, it is slightly tedious but straightforward to show that the equation obtained by adding these three identities is
\begin{equation}\label{eq:Jacobi-KR.4.1}
x_l=y_l+z_l.
\end{equation}
By a similar argument, applying $(\?)_{pv}\otimes(\?)_{uj}\otimes(\?)_{iq}$ to~\eqref{eq:CD-explicit-right-double-Jacobi} and the multiplication maps from 
$E_N\otimes A_N\otimes A_N$, $A_N\otimes E_N\otimes A_N$ and $A_N\otimes A_N\otimes E_N$ into $E_N$, we obtain three identities on $E_N$. Adding them and comparing with~\eqref{eq:Jacobi-KR.1.2.b}, \eqref{eq:Jacobi-KR.2.2.b} and~\eqref{eq:Jacobi-KR.2.2.d}, it is again slightly tedious but straightforward to show that the resulting equation is
\begin{equation}\label{eq:Jacobi-KR.4.2}
x_r=y_r+z_r.
\end{equation}
The Jacobi identity~\eqref{eq:KR-Jacobi-inicial} now follows from~\eqref{eq:Jacobi-KR.1}, \eqref{eq:Jacobi-KR.4.1} and \eqref{eq:Jacobi-KR.4.2}:
\begin{align*}
[e_{ij},[f_{uv},g_{pq}]]&=x_l+x_r=(y_l+z_l)+(y_r+z_r)=(y_l+y_r)+(z_l+z_r)
\\&
=[[e_{ij},f_{uv}],g_{pq}]+[f_{uv},[e_{ij},g_{pq}]].  
\qedhere
\end{align*}
\end{proof}

\begin{corollary}\label{cor:KR-non-degenerate-double-Courant-Dorfman}
If an $R$-linear double Courant--Dorfman algebra $(A,E,\ii{\?,\?},\partial,\cc{\?,\?})$ is non-degenerate, then so is the induced Courant--Dorfman algebra $(A_N,E_N,\langle\?,\?\rangle, \partial_N, [-,-])$.
\end{corollary}

\begin{proof}
Immediate by Theorem~\ref{thm:KR-double-Courant-Dorfman} and~\cite[Lemma 3.3.3]{VdB08a}.

\end{proof}

\subsection{The Kontsevich--Rosenberg principle for exact double Courant--Dorfman algebras}
\label{sec:KR-exact-double-CD}

Here we show that the standard exact double Courant--Dorfman algebras and their twists by Karoubi--de Rham closed 3-forms, constructed in Theorems~\ref{thm:standard-DCA} and~\ref{thm:twist-CD-algebra-double}, respectively, satisfy the Kontsevich--Rosenberg principle for smooth algebras.

For any commutative $R$-algebra $\cO$, $\Omega^{\bullet}_{\cO}=\wedge^{\bullet}_{\cO}\Omega^1_{\cO}$ is the algebraic de Rham complex of $\cO$ (relative over $R$), with ($R$-linear) exterior differential $\Wd \colon \Omega^{\bullet}_{\cO}\to\Omega^{\bullet+1}_{\cO}$, where $\Omega^1_{\cO}$ is the $\cO$-module of (commutative) K\"ahler differentials (relative over $R$)~\cite[\S II. 8]{Hartshorne}, $[\commX,\commY]\in\Der\cO$ is the Lie bracket of two ($R$-linear) derivations $\commX,\commY\in\Der\cO$, and $\Wi_{\commX}\colon \Omega^{\bullet}_{\cO}\to \Omega^{\bullet-1}_{\cO}$ and  $\WL_{\commX}=[\Wd,\Wi_{\commX}]\colon\Omega^{\bullet}_{\cO}\to\Omega^{\bullet}_{\cO}$  are the contraction and the Lie derivative along any $\commX\in\Der\cO$.

To bring together the noncommutative and commutative realms, we define linear maps
\begin{subequations}\label{eq:KR-exact-double-CD.1}
\begin{align}\label{eq:KR-exact-double-CD.1.a}
(\?)_{ij}\colon\Omega_R^\bullet A&\lto\Omega_{A_N}^\bullet,\quad \omega\longmapsto\omega_{ij}, 
\\\label{eq:KR-exact-double-CD.1.b}
(\?)_{ij}\colon\DDer_RA&\lto\Der A_N,\quad X\longmapsto X_{ij}, 
\end{align}\end{subequations}
for all integers $ i$ and $j$ such that $1\leq i,j\leq \lvert N\rvert $, by the condition that they are given on
\[
\omega=a_0\du{a_1}\cdots\du{a_n}\in\Omega_R^nA,\quad X\in\DDer_RA,
\]
with $a_0,\ldots,a_n\in A$, by the following formulae~\cite[\S\S 7.3, 7.4]{VdB08} for all $a\in A$, $1\leq u,v\leq \lvert N\rvert$:
\begin{subequations}\label{eq:KR-exact-double-CD.2}
\begin{align}\label{eq:KR-exact-double-CD.2.a}
\omega_{ij}&=a_{0,ik_1}\du{a_{1,k_1k_2}}\cdots\du{a_{n,k_{n}j}},
\\\label{eq:KR-exact-double-CD.2.b}
X_{ij}(a_{uv})&=(X'a)_{uj}(X''a)_{iv}.
\end{align}\end{subequations}


\begin{proposition}[{\cite[Proposition 3.3.4 and Corollary 3.3.5]{VdB08a} (see also \cite[\S 5]{BKR13})}]
\label{prop:KR-double-deriv}
The map~\eqref{eq:KR-exact-double-CD.1.a} determines an $A_N$-module isomorphism
\[
\big(\Omega_R^\bullet A\big)_N\cong\Omega^\bullet_{A_N}.
\]
Furthermore, if $A$ is smooth over $R$, then~\eqref{eq:KR-exact-double-CD.1.b} determines an $A_N$-module isomorphism
\[
\big(\DDer_RA\big)_N\cong \Der A_N. 
\]
\end{proposition}

Note now that there is an action of the group $\GL_N\defeq\prod_{\nu}\GL_{N_\nu}$ by conjugation on $M_N(\kk)$, and that this action induces another one on $A_N$ via~\eqref{eq:KR.adjunction-A_N}, and hence on $\Omega^\bullet_{A_N}$ and $\Der A_N$. 
As observed in~\cite[\S 6]{CBEG07} and \cite[\S\S 2.2, 2.3]{VdB08a}, the \emph{traces}
\[
\tr\,a\defeq a_{ii}\in A_N,\,\tr\,\omega\defeq\omega_{ii}\in\Omega^\bullet_{A_N},\,\tr\,X\defeq X_{ii}\in\Der A_N
\]
(sum over repeated indices) of any $a\in A$, $\omega\in\Omega_R^\bullet A$ and $X\in\DDer_RA$, only depend on the values of $a$, $\omega$ and $X$ modulo (graded) commutators. Furthermore, they are $\GL_N$-invariant~\cite[\S 7.7]{VdB08}. In particular, the assignment $\omega\mapsto\tr\,\omega$ descends to a linear map 
\begin{equation}
\tr\colon\dR_R^\bullet A\lto\(\Omega^{\bullet}_{A_N}\)^{\GL_N}
\label{eq:KR-Karoubi-de-Rham}
\end{equation}
that commutes with the de Rham differentials (see~\cite[eq. (6.2.2)]{CBEG07},~\cite[\S 2.2]{VdB08a}), and so provides a realization of the  Kontsevich--Rosenberg principle for the Karoubi--de Rham complex $\dR_R^\bullet A$.
The following result will be important in the proof of Proposition \ref{prop:KR.standard-complex-DCA.1}:

\begin{proposition}\label{prop: KR-VdB-SN}
Let $X,Y\in\DDer_RA$. Then the Lie bracket of $X_{ij},Y_{uv}\in\Der A_N$ is
\[
[X_{ij},Y_{uv}]=\db{X,Y}'_{uj}\db{X,Y}''_{iv},
\]
where $\db{-,-}$ is the double Schouten--Nijenhuis bracket on $\DDer_RA$.
\end{proposition}

\begin{proof}
This result holds more generally for $X,Y\in T_A(\DDer_RA)$ and the Schouten--Nijenhuis bracket $[\?,\?]$ of polyvector fields on $\Rep(A,N)$. See~\cite[Proposition 7.6.1]{VdB08} for details.
\end{proof}

We are now in a position to prove a Kontsevich--Rosenberg principle for exact double Courant--Dorfman algebras.

\begin{proposition}\label{prop:KR.standard-complex-DCA.1}
Let $(A,E,\ii{\?,\?},\partial,\cc{\?,\?})$ be the non-degenerate standard exact $R$-linear double Courant--Dorfman algebra of Theorem~\ref{thm:standard-DCA}, where $A$ is smooth over $R$.
Then the non-degenerate Courant--Dorfman algebra $(A_N,E_N,\langle\?,\?\rangle,\partial,[\?,\?])$ of Corollary~\ref{cor:KR-non-degenerate-double-Courant-Dorfman} is the standard exact Courant--Dorfman algebra, 
\emph{i.e.}, it is given by the following data:
\begin{enumerate}
\item[\textup{(1)}]
the commutative algebra $A_N$, 
\item[\textup{(2)}]
the $A_N$-module
\begin{equation}\label{eq:prop:KR.standard-complex-DCA.1.1}
E_N=\Der A_N\oplus\Omega^1_{A_N},
\end{equation}
whose elements will be denoted as sums $\commX+\commalpha$, with $\commX\in\Der A_N$ and $\commalpha\in\Omega^1_{A_N}$, 
\item[\textup{(3)}]
the pairing $\langle\?,\?\rangle\colon E_N\otimes E_N\to A_N$ given for all $\commX+\commalpha,\commY+\commbeta\in E_N$ by
\begin{equation}\label{eq:prop:KR.standard-complex-DCA.1.2}
\langle\commX+\commalpha,\commY+\commbeta\rangle=\Wi_{\commX}\commbeta+\Wi_{\commY}\commalpha,
\end{equation}
\item[\textup{(4)}]
the derivation $\partial_N\,\colon A_N\to E_N$ given for all $\comma\in A_N$ by
\begin{equation}\label{eq:prop:KR.standard-complex-DCA.1.3}
\partial_N\comma=0+\Wd{\comma},
\end{equation}
\item[\textup{(5)}]
the Dorfman bracket $[\?,\?]\colon E_N\otimes E_N\to E_N$ given for all $\commX+\commalpha,\commY+\commbeta\in E_N$ by
\begin{equation}\label{eq:prop:KR.standard-complex-DCA.1.4}
[\commX+\commalpha,\commY+\commbeta]=[\commX,\commY]+\WL_{\commX}\commbeta-\Wi_{\commY}\Wd{\commalpha}.
\end{equation}
\end{enumerate}
Furthermore, for all $H\in\dR_R^3A$ such that $\du_{\operatorname{DR}}{H}=0$, the double Courant--Dorfman algebra $(A,E,\ii{-,-},\partial,\cc{-,-}_H)$ of Theorem~\ref{thm:twist-CD-algebra-double} induces, via Theorem~\ref{thm:KR-double-Courant-Dorfman}, the $(\tr\,H)$-twisted standard exact Courant--Dorfman algebra $(A_N,E_N,\langle\?,\?\rangle,\partial_N,[\?,\?]_{\tr H})$ over $A_N$, 
\emph{i.e.}, $E_N, \langle\?,\?\rangle$ and $\partial_N$ are as in \eqref{eq:prop:KR.standard-complex-DCA.1.1}--\eqref{eq:prop:KR.standard-complex-DCA.1.3} above,
and $[\?,\?]_{\tr H}$ is given by 
\begin{equation}\label{eq:prop:KR.standard-complex-DCA.1.5}
[\commX+\commalpha,\commY+\commbeta]_{\tr H}=[\commX+\commalpha,\commY+\commbeta]+\Wi_{\commX}\Wi_{\commY}(\tr H).
\end{equation}
\end{proposition}

We start with some preliminary calculations.

\begin{lemma}\label{lem:KR.standard-complex-DCA.1}
For all $X\in\DDer_RA$, $\alpha\in\Omega_R^\bullet A$, $\omega\in\dR_R^\bullet A$ and $1\leq i,j,u,v\leq \lvert N\rvert$, we have
\begin{subequations}\label{eq:lem:KR.standard-complex-DCA.1.1}
\begin{align}\label{eq:lem:KR.standard-complex-DCA.1.1.a}
\Wd{\alpha_{ij}}&=(\du{\alpha})_{ij},
\\\label{eq:lem:KR.standard-complex-DCA.1.1.b}
\Wi_{X_{ij}}\alpha_{uv}&=(i'_X\alpha)_{uj}(i''_X\alpha)_{iv},
\\\label{eq:lem:KR.standard-complex-DCA.1.1.c}
\Wi_{X_{ij}}\tr\,\omega &=(\iota_X\omega)_{ij},
\\\label{eq:lem:KR.standard-complex-DCA.1.1.d}
\WL_{X_{ij}}\alpha_{uv}&=(L'_X\alpha)_{uj}(L''_X\alpha)_{iv},
\end{align}
\end{subequations}
where $i_X\alpha=i'_X\alpha\otimes i''_X\alpha$ and $L_X\alpha=L'_X\alpha\otimes L''_X\alpha$, using Sweedler's notation to omit the summation symbol, with $i'_X\alpha,i''_X\alpha,L'_X\alpha,L''_X\alpha\in\Omega^\bullet_RA$ (cf.~\eqref{eq:Sweedler-on-Cartan-calculus.3}).
\end{lemma}

\begin{proof}
The identity \eqref{eq:lem:KR.standard-complex-DCA.1.1.a} was obtained in \cite[\S7.3]{VdB08} (see also the proof of \cite[Proposition 3.3.4]{VdB08a}). To prove~\eqref{eq:lem:KR.standard-complex-DCA.1.1.b}, it suffices to show that \eqref{eq:lem:KR.standard-complex-DCA.1.1.b} holds on generators (see also~\cite[Lemma 3.3.6]{VdB08a}). For $\alpha=a\in A$, we have $\Wi_{X_{ij}}a_{uv}=0=(i'_Xa)_{uj}(i''_Xa)_{iv}$, by~\eqref{eq:contraction.4.a}, while for $\alpha=\du{a}\in\Omega_R^1A$, \eqref{eq:contraction.4.b} and \eqref{eq:KR-exact-double-CD.2} imply
\[
\Wi_{X_{ij}}(\du{a})_{uv}=\Wi_{X_{ij}}\Wd{a_{uv}}=X_{ij}(a_{uv})=(X'a)_{uj}(X'' a)_{iv}=(i'_X\du{a})_{uj}(i''_X\du{a})_{iv}, 
\]
as required. To prove~\eqref{eq:lem:KR.standard-complex-DCA.1.1.c}, we apply \eqref{eq:def-contraction-reduced}, the last identity of \eqref{eq:KR-relations-defining-AV} and~\eqref{eq:lem:KR.standard-complex-DCA.1.1.b}:
\begin{align*}
(\iota_X\omega)_{ij}&=(-1)^{|i'_X\omega||i''_X\omega|}\big((i''_X\omega)(i'_X\omega)\big)_{ij}=(-1)^{|i'_X\omega||i''_X\omega|}(i''_X\omega)_{ik}(i'_X\omega)_{kj}
\\
&=(i'_X\omega)_{kj}(i''_X\omega)_{ik}=\Wi_{X_{ij}}\omega_{kk}=\Wi_{X_{ij}}\tr\,\omega.
\end{align*}

Finally, by the Cartan homotopic formula \eqref{eq:Lie-derivative.2} and the Leibniz rule, we have
\begin{equation}\label{eq:KR-aux-Lie-derivative}
L_X\alpha=\du{(i_X\alpha)}+i_X(\du{\alpha})=\du{i'_X\alpha}\otimes i''_X\alpha+(-1)^{|i'_X\alpha|}i'_X\alpha\otimes\du{i''_X\alpha}+i'_X(\du{\alpha})\otimes i''_X(\du{\alpha}).
\end{equation}
Using the commutative Cartan homotopic formula, \eqref{eq:lem:KR.standard-complex-DCA.1.1.a}, the Leibniz rule and~\eqref{eq:KR-aux-Lie-derivative}, we obtain~\eqref{eq:lem:KR.standard-complex-DCA.1.1.d}:
\begin{align*}
\WL_{X_{ij}}\alpha_{uv}&=\Wd{\Wi_{X_{ij}}\alpha_{uv}}+\Wi_{X_{ij}}\Wd{\alpha_{uv}}
=\Wd\big((i'_X\alpha)_{uj})(i''_X\alpha)_{iv}\big)+\Wi_{X_{ij}}\Wd{\alpha_{uv}}
\\
&=\Wd{(i'_X\alpha})_{uj}(i''_X\alpha)_{iv}+(-1)^{|i'_X\alpha|}(i'_X\alpha)_{uj}\Wd({i''_X\alpha})_{iv}+\Wi_{X_{ij}}\Wd{\alpha_{uv}}
\\
&=(\du{i'_X\alpha})_{uj}(i''_X\alpha)_{iv}+(-1)^{|i'_X\alpha|}(i'_X\alpha)_{uj}(\du{i''_X\alpha})_{iv}+(i'_{X}\du{\alpha})_{ij}(i''_{X}\du{\alpha})_{uv}
\\
&=(L'_X\alpha)_{uj}(L''_X\alpha)_{iv}.
\qedhere\end{align*}
\end{proof}

\begin{proof}[Proof of Proposition~\ref{prop:KR.standard-complex-DCA.1}]
Note that $E_N$ is an $A_N$-module, as it is obtained applying the functor~\eqref{eq:VdB-functor-def}. 
Since this functor is additive and $E=\DDer_RA\oplus\Omega_R^1A$, we have
\[
E_N=\big(\DDer_RA\big)_N\oplus \big(\Omega_R^1A\big)_N=\Der A_N\oplus\Omega^1_{A_N},
\]
where we have applied Proposition \ref{prop:KR-double-deriv} in the last identity. 

To prove the assertions about the (untwisted) standard exact double Courant--Dorfman algebra, it suffices to check the identities~\eqref{eq:prop:KR.standard-complex-DCA.1.2}--\eqref{eq:prop:KR.standard-complex-DCA.1.4} on generators of $A_N$ and $E_N$, \emph{i.e.}, 
\begin{subequations}\label{pf:eq:lem:KR.standard-complex-DCA.1.1}
\begin{align}\label{pf:eq:lem:KR.standard-complex-DCA.1.1.a}
\partial_N a_{ij}&=\Wd{a_{ij}},
\\\label{pf:eq:lem:KR.standard-complex-DCA.1.1.b}
\langle X_{ij}+\alpha_{ij},Y_{uv}+\beta_{uv}\rangle&=\Wi_{X_{ij}}\beta_{uv}+\Wi_{Y_{uv}}\alpha_{ij},
\\\label{pf:eq:lem:KR.standard-complex-DCA.1.1.c}
[X_{ij}+\alpha_{ij},Y_{uv}+\beta_{uv}]&=[X_{ij},Y_{uv}]+\WL_{X_{ij}}\beta_{uv}-\Wi_{Y_{uv}}\Wd{\alpha_{ij}},
\end{align}
\end{subequations}
for all $a\in A$, $X,Y\in\DDer_RA$, $\alpha,\beta\in\Omega_R^\bullet A$ and $1\leq i,j,u,v\leq\lvert N\rvert$, where the left-hand sides are defined by~\eqref{eq:KR-CD-thm}. 

Firstly, \eqref{pf:eq:lem:KR.standard-complex-DCA.1.1.a} is immediate, by \eqref{eq:thm:standard-DCA.3}, \eqref{eq:lem:KR.standard-complex-DCA.1.1} and \eqref{eq:KR-CD-thm.a}.
Secondly, \eqref{pf:eq:lem:KR.standard-complex-DCA.1.1.b} follows from \eqref{eq:KR-CD-thm.b}, \eqref{eq:thm:standard-DCA.4}, the commutativity of $A_N$ and \eqref{eq:lem:KR.standard-complex-DCA.1.1.b}:
\begin{align*}
\langle X_{ij}+\alpha_{ij},Y_{uv}+\beta_{uv}\rangle&=\ii{X+\alpha, Y+\beta}'_{uj}\ii{X+\alpha,Y+\beta}''_{iv}
\\
&=(i'_X\beta)_{uj}(i''_X\beta)_{iv}+(i''_Y\alpha)_{uj}(i'_Y\alpha)_{iv}
\\
&=(i'_X\beta)_{uj}(i''_X\beta)_{iv}+(i'_Y\alpha)_{iv}(i''_Y\alpha)_{uj}
\\
&=\Wi_{X_{ij}}\beta_{uv}+\Wi_{Y_{uv}}\alpha_{ij}.
\end{align*}

Next, to prove \eqref{pf:eq:lem:KR.standard-complex-DCA.1.1.c}, we first compute
\begin{align*}
\Wi_{Y_{uv}}\Wd\alpha_{ij}&=\Wi_{Y_{uv}}(\du{\alpha})_{ij}
=(i'_Y\du{\alpha})_{iv} (i''_Y\du{\alpha})_{uj}
\\
&=(-1)^{|i'_Y\du{\alpha}||i''_Y\du{\alpha}|} (i''_Y\du{\alpha})_{uj}(i'_Y\du{\alpha})_{iv} =\big((i_Y\du{\alpha})^\sigma\big)'_{uj}\big((i_Y\du{\alpha})^\sigma\big)''_{iv},
\end{align*}
where we used \eqref{eq:lem:KR.standard-complex-DCA.1.1.a}, \eqref{eq:lem:KR.standard-complex-DCA.1.1.b} and the fact that $(i_Y\du{\alpha})^\sigma=(-1)^{|i'_Y\du{\alpha}||i''_Y\du{\alpha}|}i''_Y\du{\alpha}\otimes i'_Y\du{\alpha}$.
The identity \eqref{pf:eq:lem:KR.standard-complex-DCA.1.1.c} follows now from this, \eqref{eq:KR-CD-thm.c}, \eqref{eq:thm:standard-DCA.4}, Proposition \ref{prop: KR-VdB-SN} and \eqref{eq:lem:KR.standard-complex-DCA.1.1.d}:
\begin{align*}
[X_{ij}+&\alpha_{ij},Y_{uv}+\beta_{uv}]
=\lr{X,Y}'_{l,uj}\lr{X,Y}''_{l,iv}+\lr{X,Y}'_{r,uj}\lr{X,Y}''_{r,iv}
\\&\quad
+ (L^{l'}_X\beta)_{uj} (L^{l''}_X\beta)_{iv}+(L^{r'}_X\beta)_{uj} (L^{r''}_X\beta)_{iv}
-(i^{l''}_Y\du{\alpha})_{uj}(i^{l'}_Y\du{\alpha})_{iv}-(i^{r''}_Y\du{\alpha})_{uj}(i^{r'}_Y\du{\alpha})_{iv}
\\
&=\lr{X,Y}'_{uj}\lr{X,Y}''_{iv}+ (L^{'}_X\beta)_{uj} (L^{''}_X\beta)_{iv}-\big((i_Y\du{\alpha})^\sigma\big)'_{uj}\big((i_Y\du{\alpha})^\sigma\big)''_{iv}
\\
&=[X_{ij},Y_{uv}]+\WL_{X_{ij}}\beta_{uv}-\Wi_{Y_{uv}}\Wd\alpha_{ij}.
\end{align*}

Before we prove the assertions about the $H$-twisted standard exact double Courant--Dorfman algebra, note that formula~\eqref{eq:prop:KR.standard-complex-DCA.1.5} defines a Courant--Dorfman algebra, because 
\[
\Wd \tr\, H=\Wd H_{ii}=(\du_{\dR}H)_{ii}=0, 
\]
since $\du_\dR{H}=0$ by hypothesis. 
Hence it is enough to prove \eqref{eq:prop:KR.standard-complex-DCA.1.5} on generators, that is, 
\begin{equation}\label{pf:KR-Courant-deformado.1}
[X_{ij}+\alpha_{ij},Y_{uv}+\beta_{uv}]_{{\tr H}}=[X_{ij}+\alpha_{ij},Y_{uv}+\beta_{uv}]+\Wi_{X_{ij}}\Wi_{Y_{uv}}\tr\, H.
\end{equation}
Now, the left-hand side of~\eqref{pf:KR-Courant-deformado.1} is defined by~\eqref{eq:KR-CD-thm.c}, so~\eqref{pf:eq:lem:KR.standard-complex-DCA.1.1.c} implies that it equals
\begin{equation}\label{pf:KR-Courant-deformado.2}
[X_{ij}+\alpha_{ij},Y_{uv}+\beta_{uv}]_{{\tr H}}=[X_{ij}+\alpha_{ij},Y_{uv}+\beta_{uv}]+(i'_X\iota_Y H)_{uj}(i''_X\iota_Y H)_{iv}.
\end{equation}
Applying \eqref{eq:lem:KR.standard-complex-DCA.1.1.c} and \eqref{eq:lem:KR.standard-complex-DCA.1.1.d}, we see that the last term of~\eqref{pf:KR-Courant-deformado.2} is 
\begin{equation}\label{pf:KR-Courant-deformado.3}
(i'_X\iota_Y H)_{uj}(i''_X\iota_Y H)_{iv}=\Wi_{X_{ij}}(\iota_Y H)_{uv}=\Wi_{X_{ij}}\Wi_{Y_{uv}}\tr\, H.
\end{equation}
The identity~\eqref{pf:KR-Courant-deformado.1} follows now from~\eqref{pf:KR-Courant-deformado.2} and~\eqref{pf:KR-Courant-deformado.3}.
\end{proof}


\begin{thebibliography}{12}

\frenchspacing\smallbreak
  
 
\bibitem{ACF15}
L.\ \'Alvarez-C\'onsul \and D.\ Fern\'andez, 
\emph{Noncommutative bi-symplectic $\mathbb{N}Q$-algebras of weight 1}.
In \emph{Dynamical Systems, Differential Equations and Applications. AIMS Proceedings}, (2015), 19--28.
 
\bibitem{ACF17}
L.\ \'Alvarez-C\'onsul \and D.\ Fern\'andez, 
\emph{Non-commutative Courant algebroids and quiver algebras}.
(2017), 50 pp., available at \href{https://arxiv.org/pdf/1705.04285.pdf}{arXiv:1705.04285}.

 


\bibitem{arakawa}
T.\ Arakawa,
\emph{A remark on the {$C_2$}-cofiniteness condition on vertex algebras}.
Math. Z. \textbf{270} (2012), no. 1-2, 559--575.

\bibitem{BDSK09}
A.\ Barakat, A.\ De\ Sole \and V.\ G.\ Kac,
\emph{Poisson vertex algebras in the theory of Hamiltonian equations},
Jpn. J. Math. \textbf{4} (2009) 141--252.

\bibitem{bd}
A.\ Beilinson \and  V.\ Drinfeld, 
\emph{Chiral algebras},
American Mathematical Society Colloquium Publications, \textbf{51}, American Mathematical Society, Providence, RI, 2004.
        

\bibitem{BCER12}
Y.\ Berest, X.\ Chen, F.\ Eshmatov \and A.\ Ramadoss,
\emph{Noncommutative Poisson structures, derived representation schemes and Calabi-Yau algebras}.
Contemporary Mathematics, American Mathematical Society, \textbf{583} (2012), pp. 219--246.

 
\bibitem{BKR13}
 Y.\ Berest, G.\ Khachatryan \and A.\ Ramadoss,
 \emph{Derived representation schemes and cyclic homology}.
 Adv. Math. \textbf{245} (2013), 625--689.

       
\bibitem{bressler1}
P.\ Bressler.
\emph{The first Pontryagin class}.
Compos. Math. \textbf{143} (2007), no. 5, 1127--1163.


\bibitem{Brion12}
M. Brion (ed.),
\emph{Geometric methods in representation theory I, II}. 
S\'eminaires et Congr\`es \textbf{24}. Soci\'et\'e Math\'ematique de France, Paris, 2012.

\bibitem{CW}
M.\ Casati \and J. P.\ Wang,  
\emph{Hamiltonian structures for integrable nonabelian difference equations}.
(2021), 58 pp., available at \href{https://arxiv.org/pdf/2101.06191.pdf}{arXiv:2101.06191}.
      
       
    
      

      
\bibitem{Cou90}
T.\ J.\ Courant,
\emph{Dirac manifolds}.
Trans. Amer. Math. Soc. \textbf{319}(2) (1990), 631--661.

       

\bibitem{CB11}
W.\ Crawley-Boevey,
\emph{A note on noncommutative Poisson structures}.
J. Algebra \textbf{325} (2011) 205--215.

\bibitem{CBEG07}
W.\ Crawley-Boevey, P.\ Etingof \and V.\ Ginzburg,
\emph{Noncommutative geometry and quiver algebras}.
Adv. Math., \textbf{209} (2007), no. 1, 274--336.

\bibitem{CQ95}
J.\ Cuntz \and D.\ Quillen,
\emph{Algebra extensions and nonsingularity}.
J. Amer. Math. Soc., \textbf{8} (1995) no. 2, 251--289.

\bibitem{DSK13}
A.\ De\ Sole \and V.\ G.\ Kac, 
\emph{Non-local Poisson structures and applications to the theory of integrable systems}.
Jpn. J. Math. \textbf{8} (2013) 233--347.

\bibitem{DSKV15}
A.\ De Sole, V. G.\ Kac \and D.\ Valeri,
\emph{Double Poisson vertex algebras and non-commutative Hamiltonian equations}.
Adv. Math. \textbf{281} (2015), 1025--1099.
       
\bibitem{Do93} 
I. Dorfman. 
\emph{Dirac structures and integrability of nonlinear evolution equations}. John Wiley \& Sons, Ltd., Chichester, 1993.

\bibitem{EZ11}
J.\ Ekstrand \and M.\ Zabzine,
\emph{Courant-like brackets and loop spaces}.
J. High Energy Phys., 03, 2011 (074).



\bibitem{FV}
M.\ Fairon \and D.\ Valeri,
\emph{Double Multiplicative Poisson Vertex Algebras}.
Accepted for publication in Int. Math. Res. Not. 
\href{https://doi.org/10.1093/imrn/rnac245}{https://doi.org/10.1093/imrn/rnac245}.

\bibitem{Fer16}
D.\ Fern\'andez,
\emph{Non-commutative symplectic $\mathbb{N}Q$-geometry and Courant algebroids}.
PhD Thesis. Universidad Aut\'onoma de Madrid, 2016.

\bibitem{FH}
D.\ Fern\'andez \and E.\ Herscovich, 
\emph{Cyclic $A_\infty$-algebras and double Poisson algebras}. 
J. Noncommut. Geom. \textbf{15} (2021), 241--278.

\bibitem{gerbes2}
V.\ Gorbounov, F.\ Malikov \and V.~Schechtman,
\emph{Gerbes of chiral differential operators. II. Vertex algebroids}. 
Invent. Math. \textbf{155} (2004), no. 3, 605--680. 




\bibitem{Hartshorne}
        R.\ Hartshorne, 
        \emph{Algebraic Geometry}.
        Springer, New York--Heidelberg, 1977.


\bibitem{Hel09}
R.\ Heluani,
\emph{Supersymmetry of the chiral de Rham complex II: Commuting sectors}.
Int. Math. Res. Not. \textbf{6} (2009), 953--987.

\bibitem{kac:vertex}
V. G.\ Kac,
\newblock {\em Vertex algebras for beginners},  
University Lecture Series, 10. American Mathematical Society, Providence, RI, 1997. viii+141 pp.

 \bibitem{Ke11}
 B.\ Keller,
 \emph{Deformed Calabi-Yau completions}. 
 With an appendix by Michel Van den Bergh. J. Reine Angew. Math. \textbf{654} (2011), 125--180.

\bibitem{Kon94a}
M.\ Kontsevich,
\emph{Formal (non)commutative symplectic geometry}.
In \emph{The Gelfand Mathematical Seminars, 1990-1992}, pages 173--187. Bikh\"auser Boston, Boston, MA, 1993.
       
\bibitem{KR00}
M.\ Kontsevich \and A.\ Rosenberg,
\emph{Noncommutative smooth spaces}.
In \emph{The Gelfand Mathematical Seminars, 1996--1999}, Gelfand Math. Sem., pages 85--108. Birkh\"auser Boston, Boston, MA, 2000.

\bibitem{Kupershmidt.2000}
B.\ A.\ Kupershmidt
\emph{KP or mKP. Noncommutative mathematics of Lagrangian, Hamiltonian, and integrable systems}. Mathematical Surveys and Monographs \textbf{78}. Amer. Math. Soc., Providence, RI, 2000.



       



\bibitem{LeBruyn-VandeWeyer.2002}
L.\ Le\ Bruyn \and G.\ Van\ de\ Weyer,
\emph{Formal structures and representation spaces}.
J. Algebra \textbf{247} (2002) 616--635.

\bibitem{li05}
H.~Li.
\emph{Abelianizing vertex algebras}.
Comm. Math. Phys. \textbf{259} (2005) 391--411.

\bibitem{LWX97}
Z.-J.\ Liu, A.\ Weinstein \and P.\ Xu,
\emph{Manin triples for Lie bialgebroids},
J. Differential Geom. \textbf{45}(3) (1997) 547--574.


 \bibitem{MSV99}
 F.\ Malikov, V.\ Schechtman, A.\ Vaintrob,
 \emph{Chiral de Rham complex},
 Comm. Math. Phys. \textbf{204} (1999), 439--473.

\bibitem{Mikhailov-Sokolov.2000}
A.\ V.\ Mikhailov \and V.\ V.\ Sokolov,
\emph{Integrable ODEs on associative algebras},
Comm. Math. Phys. \textbf{211} (2000) 231--251.



\bibitem{ORS}
A.\ Odesskii, V.\ Rubtsov \and V.\ Sokolov, 
\emph{Double Poisson brackets on free associative algebras}.
 Noncommutative birational geometry, representations and combinatorics, 225--239, Contemp. Math., \textbf{592}, Amer. Math. Soc., Providence, RI, 2013.
      
      \bibitem{Olver-Sokolov.1998}
P.\ J.\ Olver \and V.\ V.\ Sokolov,
\emph{Integrable evolution equations on associative algebras},
Comm. Math. Phys. \textbf{193} (1998) 245--268

\bibitem{Roy00}
D.\ Roytenberg,
\emph{On the structure of graded symplectic supermanifolds and Courant algebroids}.
In \emph{Quantization, Poisson brackets and Beyond}, Th. Voronov (ed.), Contemp. Math. \textbf{315}, Amer. Math. Soc., Providence, RI, 2002.
       
       
\bibitem{Roy09}
D.\ Roytenberg,
\emph{Courant--Dorfman algebras and their cohomology}. 
Lett. Math. Phys. \textbf{90} (2009), no. 1--3, 311--351.

                                                                               
        
\bibitem{Sev00}
P.\ \v Severa,
\emph{Letters to Alan Weinstein about Courant algebroids}. 
29 pp., available at \href{https://arxiv.org/pdf/1707.00265.pdf}{arXiv:1707.00265}.

\bibitem{SW01}
P.\ \v Severa \and A. Weinstein,
\emph{Poisson geometry with a 3-form background}. Prog. Theor. Phys. Suppl.
\textbf{144} (2001), 145--154
  

\bibitem{vaintrob}
A.\ Vaintrob, 
\emph{Lie algebroids and homological vector fields}. 
Uspekhi Mat. Nauk \textbf{52} (1997), no. 2, 428--429.
       
\bibitem{VdB08}
M.\ Van den Bergh,
\emph{Double Poisson algebras}.
Trans. Amer. Math. Soc., \textbf{360} (2008) no. 11, 555--603.
        
\bibitem{VdB08a}
M.\ Van den Bergh,
\emph{Non-commutative quasi-Hamiltonian spaces}.
In: \emph{Poisson geometry in mathematics and physics}. Contemp. Math., vol. 450, Amer. Math. Soc., Providence, RI, pp. 273--299, 2008.



\end{thebibliography}
\end{document}